\documentclass[letterpage, 11pt, notitlepage]{article}

\usepackage[margin=1.0in]{geometry}

\usepackage{enumitem}
% use Times
\usepackage{times}
% For figures
\usepackage{graphicx} % more modern
\usepackage{subfigure}
%\usepackage{graphicx}
%\usepackage{subcaption}
% For citations
% \usepackage{natbib}

% For algorithms
\usepackage{algorithm}
\usepackage{algorithmic}

% As of 2011, we use the hyperref package to produce hyperlinks in the
% resulting PDF.  If this breaks your system, please commend out the
% following usepackage line and replace \usepackage{icml2017} with
% \usepackage[nohyperref]{icml2017} above.
\usepackage{hyperref}
\usepackage{breakurl}

% A fix for a problem of break links that I have
\AtBeginShipout{%
  \ifnum\value{page}>1 %
    \typeout{* Additional boxing of page `\thepage'}%
    \setbox\AtBeginShipoutBox=\hbox{\copy\AtBeginShipoutBox}%
  \fi
}

% Packages hyperref and algorithmic misbehave sometimes.  We can fix
% this with the following command.

\usepackage{pifont}

% More packages
\usepackage{epsfig}
\usepackage{amssymb}
\usepackage{amsmath}
\usepackage{amsthm}
\usepackage{amsfonts}
\usepackage{bbding}
\usepackage{array}
\usepackage{caption,tabularx,booktabs}
\usepackage{arydshln}
\usepackage{paralist}
\usepackage{xargs}                      % Use more than one optional parameter in a new commands
\usepackage[pdftex,dvipsnames]{xcolor}  % Coloured text etc.

\usepackage{multirow}

\newcommandx{\unsure}[2][1=]{\todo[inline,linecolor=red,backgroundcolor=red!25,bordercolor=red,#1]{#2}}
\newcommandx{\change}[2][1=]{\todo[linecolor=blue,backgroundcolor=blue!25,bordercolor=blue,#1]{#2}}
\newcommandx{\info}[2][1=]{\todo[linecolor=OliveGreen,inline,backgroundcolor=OliveGreen!25,bordercolor=OliveGreen,#1]{#2}}
\newcommandx{\improvement}[2][1=]{\todo[linecolor=Plum,inline,backgroundcolor=Plum!25,bordercolor=Plum,#1]{#2}}

% Shrink the spaces
\setlength{\textfloatsep}{10pt minus 1.5pt}
\setlength{\floatsep}{10pt minus 1.5pt}
\setlength{\intextsep}{10pt minus 1.5pt}
\setlength{\dbltextfloatsep}{10pt minus 1.5pt}
\setlength{\dblfloatsep}{10pt minus 1.5pt}
\usepackage[font=small,skip=5pt]{caption}

% Macros
\newtheorem{thm}{Theorem}
\newtheorem{lem}{Lemma}

\newtheorem{cor}{Corollary}

\newtheorem{ass}{Assumption}
\newtheorem{remark}{Remark}

%\textcolor{blue}
%\newcommand{\new}[1]{{#1}}

%\newcommand{\new}[1]{\textcolor{red}{#1}}

% Math operators and mid columns
\newcolumntype{C}[1]{>{\centering\let\newline\\\arraybackslash\hspace{0pt}}m{#1}}

\newcommand\tagthis{\addtocounter{equation}{1}\tag{\theequation}}

\DeclareMathOperator{\R}{\mathbb{R}} 
\DeclareMathOperator{\Ocal}{\mathcal{O}}

           % expectation

% Create subtheorems
\makeatletter
\newcounter{subthm} 
\let\savedc@thm\c@hyp

\newcommand{\normhyp}{%
  \let\c@hyp\savedc@hyp % revert to the old one
  \renewcommand\thehyp{\arabic{hyp}}%
} 
\makeatother

% Create subassumptions
\makeatletter
\newcounter{subass} 
\let\savedc@ass\c@hyp

\makeatother

\newcommand{\Eset}{\mathbb{E}}

\newcommand{\Pset}{\mathbb{P}}

\newcommand{\Rset}{\mathbb{R}}

\newcommand{\Acal}{{\cal A}}

\newcommand{\Fcal}{{\cal F}}

\newcommand{\Scal}{{\cal S}}

\newcommand{\Xcal}{{\cal X}}

%------  bar  --------- 

\newcommand{\xbar}{{\bar{x}}}

\makeatother

\setlength\parindent{0pt}
\setlength{\parskip}{1em}

\usepackage{blindtext,titlefoot}

\begin{document}

\title{\fontsize{20}{20}\selectfont Convergence Rates of Accelerated Markov Gradient Descent with Applications in Reinforcement Learning}

\author{
Thinh T. Doan
\and
Lam M. Nguyen
\and
Nhan H. Pham
\and
Justin Romberg
}

\maketitle
\date{}
\unmarkedfntext{\textbf{Thinh T. Doan}, School of Electrical and Computer Engineering, Georgia Institute of Technology, GA, USA. Email: \href{mailto:thinhdoan@gatech.edu}{thinhdoan@gatech.edu}}
\unmarkedfntext{\textbf{Lam M. Nguyen}, IBM Research, Thomas J. Watson Research Center, Yorktown Heights, NY, USA. Email: \href{mailto:lamnguyen.mltd@ibm.com}{LamNguyen.MLTD@ibm.com}}
\unmarkedfntext{\textbf{Nhan H. Pham}, Department of Statistics and Operations Research, University of North Carolina at Chapel Hill, Chapel Hill, NC, USA. Email: \href{mailto:nhanph@live.unc.edu}{nhanph@live.unc.edu}}
\unmarkedfntext{\textbf{Justin Romberg}, School of Electrical and Computer Engineering, Georgia Institute of Technology, GA, USA. Email: \href{mailto:jrom@ece.gatech.edu}{jrom@ece.gatech.edu}}

\begin{abstract}
Motivated by broad applications in machine learning, we study the popular accelerated stochastic gradient descent (ASGD) algorithm for solving (possibly nonconvex) optimization problems. 
We characterize the finite-time performance of this method when the gradients are sampled from Markov processes, and hence biased and dependent from time step to time step; in contrast, the analysis in existing work relies heavily on the stochastic gradients being independent and sometimes unbiased. Our main contributions show that under certain (standard) assumptions on the underlying Markov chain generating the gradients, ASGD converges at the nearly the same rate with Markovian gradient samples as with independent gradient samples.  
The only difference is a logarithmic factor that accounts for the mixing time of the Markov chain.
One of the key motivations for this study are complicated control problems that can be modeled by a Markov decision process and solved using reinforcement learning.  We apply the accelerated method to several challenging problems in the OpenAI Gym and Mujoco, and show that acceleration can significantly improve the performance of the classic temporal difference learning and REINFORCE algorithms.
\end{abstract}

\section{Introduction}
\label{sec:intro}
Stochastic gradient descent ({\sf SGD}) and its variants, originally introduced in \cite{RM1951} under the name of stochastic approximation ({\sf SA}), is the most efficient and widely used method for solving optimization problems in machine learning ({\sf RL}) \cite{bottou2016optimization,SVRG,SAGA,nguyen2017sarah} and reinforcement learning \cite{schulman2015trust,schulman2017proximal}. It can substantially reduce the cost of computing a step direction in supervised learning, and offers a framework for systematically handling uncertainty in reinforcement learning. In this context, we want to optimize an (unknown) objective function $f$ when queries for the gradient are noisy.  At a point $x$, we observe a random vector $G(x, \xi)$ whose mean is the (sub)gradient of $f$ at $x$. {\sf SGD} updates the iterate $x$ by moving along the opposite direction of $G(x,\xi)$ scaled by some step sizes. Through judicious choice of step sizes, the ``noise'' induced by this randomness is averaged out across iterations, and the algorithm converges to the stationary point of $f$; see for example \cite{bottou2016optimization} and the references therein.

To further improve the performance of {\sf SGD}, stochastic versions of Nesterov’s acceleration scheme  \cite{Nesterov1983AMF} have been studied in different settings  \cite{jain2017accelerating,vaswani2018fast,liu2018accelerating}. In many of these cases, it has been observed that acceleration improves the performance of {\sf SGD}  both in theory \cite{ GhadimiL2013, sutskever2013importance,dieuleveut2017harder} and in practice \cite{kasai2017sgdlibrary}, with a notable application in neural networks \cite{tensorflow2015-whitepaper}. This benefit of accelerated {\sf SGD} ({\sf ASGD}) has been studied under the i.i.d noise settings. Almost nothing is known when the noise is Markovian, which is often considered in the context of reinforcement learning ({\sf RL}) problems modeled by Markov decision processes \cite{Sutton2018}. 

In this paper, we show that a particular version of {\sf ASGD} is still applicable when the gradients of the objective are sampled from Markov process, and hence are biased and not independent across iterations. This model for the gradients has been considered previously in \cite{DuchiAJJ2012, SunSY2018,JohanssonRJ2010,RamNV2009}, where different variants of {\sf SGD} are considered. 
It has also been observed that the {\sf SGD} performs better, i.e., using less data and computation, when the gradients are sampled from Markov processes as compared to i.i.d samples in both convex and nonconvex problems \cite{SunSY2018}. 
This paper shows that the benefits of acceleration extend to the Markovian setting in theory and in numerical experiments; we provide theoretical convergence rates that nearly match those in the i.i.d.\ setting, and show empirically that the algorithm is able to learn from significantly fewer samples on benchmark reinforcement learning problems. Our goal is to draw a connection between stochastic optimization and {\sf RL}, in particular, when can we apply acceleration techniques to improve the performance of {\sf RL} algorithms?

% We provide finite-sample performance bounds that 

% and in practice, by applying this method to a number of reinforcement learning problems and observing 

% that our proposed method outperforms the popular reinforcement learning algorithms in these problems.   

\textbf{Main contributions}. 
%The main contributions of this paper are as follows.
We study accelerated stochastic gradient descent where the gradients are sampled from a Markov process, which we refer to as accelerated Markov gradient descent ({\sf AMGD}).  We show that, despite the gradients being biased and dependent across iterations, the convergence rate across many different types of objective functions (convex and smooth, strongly convex, nonconvex and smooth) is within a logarithmic factor of the comparable bounds in i.i.d settings.  This logarithmic factor is naturally related to the mixing time of the Markov process generating the stochastic gradients.  To our knowledge, these are the first such bounds for accelerated stochastic gradient descent with Markovian samples.

We also show that acceleration is extremely effective in experiments by applying it to multiple  problems in reinforcement learning.  Compared with the popular temporal difference learning and Monte-Carlo policy gradient REINFORCE algorithms, the accelerated variants require significantly fewer samples to learn a policy with comparable rewards, which aligns with our theoretical results.

% \cite{Nesterov1983AMF,Nemirovsky1983ProblemCA}

\section{Accelerated Markov gradient descent}
We consider the (possibly nonconvex) optimization problem over a closed convex set $\Xcal\subset\Rset^{d}$
\begin{align}
\underset{x\in\Xcal}{\text{minimize}}\; f(x) ,\label{prob:main}
\end{align}
where  $f:\Xcal\rightarrow\Rset$ is given as
\begin{align}
f(x) \triangleq \Eset_{\pi}[F(x;\xi)] = \int_{\Xi}F(x;\xi)d\pi(\xi).\label{notation:f}
\end{align}
Here $\Xi$ is a statistical sample space with probability distribution $\pi$ and $F(\cdot;\xi):\Xcal\rightarrow\Rset$ is a bounded below (possibly nonconvex) function  associated with $\xi\in\Xi$. We are interested in the first-order stochastic optimization methods for solving problem \eqref{prob:main}. Most of existing algorithms, such as {\sf SGD}, require a sequence of $\{\xi_{k}\}$ sampled i.i.d from the distribution $\pi$. Our focus is to consider the case where $\{\xi_{k}\}$ are generated from an ergodic Markov process, whose stationary distribution is $\pi$. In many cases, using Markov samples are more efficient than i.i.d samples in implementing {\sf SGD} \cite{SunSY2018}.  

% \tdoan{Lam: could you find couple more recent references of ASGD with i.i.d samples}

We focus on studying accelerated gradient methods for solving problem \eqref{prob:main}, originally proposed by Nesterov \cite{Nesterov1983AMF} and studied later in different variants; see for example \cite{Lan2012,GhadimiL2013,jain2017accelerating,vaswani2018fast} and the reference therein.  In particular, we show that the {\sf ASGD} algorithm proposed in \cite{Lan2012,Lan2019,GhadimiL2013} converges at (nearly) the same rate when the gradients are sampled from a Markov process as when they are sampled independently at every iteration.  Despite of its importance in applications, near-optimal convergence rates for {\sf ASGD} under Markovian noise have not yet appeared in the literature

% In particular, we study an ergodic version of {\sf ASGD} studied in  \cite{Lan2012,Lan2019,GhadimiL2013}, where the gradients are sampled from a Markov process. We name this algorithm as accelerated Markov gradient descent formally stated in Algorithms \ref{alg:ASGD_nonconvex} and \ref{alg:ASGD_convex} for nonconvex and convex problems, respectively. Our goal is to derive the rates of this method, which is unknown in the literature.      

In our algorithms,  $G(x;\xi)\in\partial F(x;\xi)$ is the (sub)gradient of $F(\cdot;\xi)$ evaluated at $x$. As mentioned we consider the case where  $\{\xi_{k}\}$ is drawn from a Markov ergodic stochastic process. We denote by $\tau(\gamma)$ the mixing time of the Markov chain $\{\xi_{k}\}$ given a positive constant $\gamma$, which basically tells us how long the Markov chain gets close to the stationary distribution \cite{LevinPeresWilmer2006}. To provide a finite-time analysis of this algorithm, we consider the following fairly standard assumption about the Markov process.

\begin{ass}\label{assump:ergodicity}
The Markov chain $\{\xi_{k}\}$ with finite state $\Xi$ is ergodic, i.e., irreducible and aperiodic. 

% is a Markov chain with state space $\Scal$ and the following limits exit
% \begin{align}
% \lim_{k\rightarrow\infty}
% \Eset[G(x;\xi_{k})] = g(x)\in\partial f(x)\quad \forall x.
% \end{align}
% In addition, $\{\xi_{k}\}$ has a geometric mixing time, i.e., there exists a $C>0$ s.t. $\tau(\gamma) = C\log(1/\gamma)$ and
% \begin{align}
% &\|\Eset[G(x;\xi_{k})] - g(x)\,|\, \xi_{0} = \xi\| \leq \gamma,\quad \forall x,\; \forall k\geq \tau(\gamma).\label{notation:tau}
% \end{align}
% In addition, the Markov chain $\{\xi_{k}\}$ has a geometric mixing time, i.e., there exists a constant $C$ s.t. 
% \begin{align}
% \tau(\gamma) = C\log\left(\frac{1}{\gamma}\right).
% \end{align} 
\end{ass}
Assumption \ref{assump:ergodicity} implies that $\{\xi_{k}\}$ has geometric mixing time\footnote{$\tau$ depends on the second largest eigenvalue of the transition probability matrix of the Markov chain.}  , i.e., given $\gamma>0$ there exists $C>0$ s.t. 
\begin{align}
 \tau(\gamma) &= C\log(1/\gamma) \quad\text{and} \quad \|\Pset^{k}(\xi_{0},\cdot) - \pi \|_{TV} \leq \gamma, \forall k\geq \tau(\gamma),\;\forall \xi_{0}\in\Xi,\label{notation:tau}    
\end{align}
where  $\|\cdot\|_{TV}$ is the total variance distance and $\Pset^{k}(\xi_{0},\xi)$ is the probability that $\xi_{k} = \xi$ when we start from $\xi_{0}$ \cite{LevinPeresWilmer2006}. This assumption holds in various applications, e.g, in incremental optimization \cite{RamNV2009}, where the iterates are updated based on a finite Markov chain. Similar observation holds in reinforcement learning problems that have a finite number of states and actions, for example in AlphaGo \cite{silver2017mastering}. Assumption \ref{assump:ergodicity} is used in the existing literature to study the finite-time performance of {\sf SA} under Markov randomness; see  \cite{SunSY2018,  SrikantY2019_FiniteTD, ChenZDMC2019, Doan2019, DoanMR2019_DTD} and the references therein.   

Before proceeding to the finite-time analysis of {\sf AMGD}, we present the motivation behind our approach and theoretical results given later. To study the asymptotic convergence of {\sf AMGD}, one may use the popular ordinary differential equation ({\sf ODE}) approach in stochastic approximation literature, see for example \cite{borkar2008,Kushner_Yin_book_2006}. On the other hand, our focus is to study the finite-time performance of {\sf AMGD}. The existing techniques in studying {\sf ASGD} rely on the main assumptions that the gradients are sampled i.i.d from the (unknown) stationary distribution $\pi$ and unbiased. In our setting, since the gradients are sampled from a Markov process, they are dependent and biased (\textit{nonstationary}). Even if we can sample from $\pi$ ($\tau= 0$ and the gradient samples are unbiased), they are still dependent. Thus, it is not trivial to handle the bias and dependence simultaneously using the existing techniques. We, therefore, utilize the geometric mixing time to  eliminate this issue in our analysis. Indeed, under Assumption \ref{assump:ergodicity}, we show that the convergence rates of the {\sf AMGD} are the same with the ones in {\sf ASGD} under i.i.d. samples for solving both convex and nonconvex problems, except for a $\log(k)$ factor which captures the mixing time $\tau$.

\section{Convergence analysis: Nonconvex case}\label{sec:nonconvex}
\begin{algorithm}[h]
\caption{{\sf AMGD} for nonconvex problems}
  {\bfseries Initialize:} Set arbitrarily $x_{k},\xbar_{k}$, step sizes $\{\alpha_{k},\beta_{k},\gamma_{k}\}$ for $k\leq 1$, and an integer $K\geq 1$\vspace{0.2cm}\\
  {\bfseries Iterations:} For $k = 1,\ldots,K$ do\vspace{-0.2cm}
    \begin{align}
    y_{k}  &= (1-\alpha_{k})\xbar_{k-1} + \alpha_{k}x_{k-1}\label{alg_nonconvex:y}\\
    x_{k} &= x_{k-1} - \gamma_{k} G(y_{k};\xi_{k})\label{alg_nonconvex:x}\\
    \xbar_{k} &= y_{k} - \beta_{k}G(y_{k};\xi_{k})\label{alg_nonconvex:xbar}
    \end{align}
   {\bfseries Output:} $y_{R}$ randomly selected from the sequence $\{y_{k}\}_{k=1}^{K}$ with probability $p_{k}$ defined as
\begin{align}
p_{k} = \frac{\gamma_{k}(1-L\gamma_{k})}{\sum_{k=1}^{K}\gamma_{k}(1-L\gamma_{k})}\cdot    
\label{thm_nonconvex:pk}
\end{align}  
\label{alg:ASGD_nonconvex}
\end{algorithm}
We study Algorithm \ref{alg:ASGD_nonconvex} for solving \eqref{prob:main} when $\Xcal = \Rset^{d}$, and $f$ is nonconvex satisfying the assumptions below. 
\begin{ass}\label{assump:lower_bound}
$f^{*} = \inf_{x\in\R^d} f(x) > -\infty$.
\end{ass}
In addition, we assume that $\nabla f$ and its samples are Lipschitz continuous and bounded, similar to the work in \cite{SunSY2018}.    
\begin{ass}\label{ass:Lipschitz}
There exists a constant $L > 0$ such that $\forall x,y$ and $\forall \xi\in\Xi$
% \begin{align}
% \| \nabla f ( x ) - \nabla f ( y ) \| \leq L \| x - y \|\qquad \text{and}\qquad \| G ( x;\xi ) - G ( y; \xi ) \| \leq L \| x - y \|. \label{ass_Lipschitz:Ineq1}   
% \end{align}
\begin{align*}
& \| \nabla f ( x ) - \nabla f ( y ) \| \leq L \| x - y \|\qquad \text{and} \qquad \| G ( x;\xi ) - G ( y; \xi ) \| \leq L \| x - y \|. \tagthis \label{ass_Lipschitz:Ineq1}   
\end{align*}
% which also implies that 
% \begin{align}
% f(y) - f(x) - \langle \nabla f(x), y - x\rangle\leq \frac{L}{2}\|y-x\|^2.\label{ass_Lipschitz:Ineq2}
% \end{align}
\end{ass}
\begin{ass}\label{assump:bounded_gradient}
There exists a constant $M>0$ such that $\forall x$ and $\forall \xi\in\Xi$
\begin{align}
\hspace{-0.3cm}\max\{\|\nabla f(x)\|, \|G(x;\xi)\|  \} \leq M.\label{assump_bounded_gradient:Ineq}
\end{align}
\end{ass}
In this section, we assume that Assumptions \ref{assump:ergodicity}--\ref{assump:bounded_gradient} always hold. Given $\alpha_{k}$, let $\Gamma_{k}$ be defined as 
\begin{align}
\Gamma_{k} = \left\{\begin{array}{ll}
1,     &  k\leq 1\\
(1-\alpha_{k})\Gamma_{k-1}     & k\geq 2.
\end{array}\right.\label{notation:Gamma}
\end{align}
We first consider the following key lemma, which is essential in the analysis of Theorem \ref{thm:nonconvex} below. 
\begin{lem}\label{lem:nonconvex}
Let $\{\gamma_{k},\beta_{k}\}$ be nonnegative and nonincreasing and $\beta_{k}\leq \gamma_{k}$. Then 
\begin{align}
& \Eset[f(x_{k})] \leq \Eset[f(x_{k-1})] -\gamma_{k}\left(1 - L\gamma_{k}\right)\Eset\left[\|\nabla f(y_{k})\|^2\right] + 8LM^2\tau(\gamma_{k})\gamma_{k-\tau(\gamma_{k})}\gamma_{k} \notag\\
& \quad + (4M^2L + M)\gamma_{k}^2 + \frac{M^2 L\Gamma_{k}}{2}\sum_{t=1}^{k}\frac{(\gamma_{t}-\beta_{t})^2}{\Gamma_{t}\alpha_{t}} +  2LM^3 \gamma_{k}\sum_{t=k-\tau(\gamma_{k})}^{k-1}\alpha_{t}\Gamma_{t}\sum_{t=1}^{k}\frac{\gamma_{t}-\beta_{t}}{\Gamma_{t}}. \label{lem_nonconvex:Ineq}
\end{align}
\end{lem}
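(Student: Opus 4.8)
The plan is to start from the $L$-smoothness descent inequality applied to consecutive iterates of the $x$-sequence. Since \eqref{alg_nonconvex:x} gives $x_k - x_{k-1} = -\gamma_k G(y_k;\xi_k)$, Assumption \ref{ass:Lipschitz} yields
\[
f(x_k) \le f(x_{k-1}) - \gamma_k \langle \nabla f(x_{k-1}), G(y_k;\xi_k)\rangle + \frac{L\gamma_k^2}{2}\|G(y_k;\xi_k)\|^2 .
\]
The quadratic term is controlled by Assumption \ref{assump:bounded_gradient}; after writing $G(y_k;\xi_k) = \nabla f(y_k) + (G(y_k;\xi_k)-\nabla f(y_k))$ it also supplies the correction that upgrades the leading descent to $-\gamma_k(1-L\gamma_k)\|\nabla f(y_k)\|^2$ and part of the $(4M^2L+M)\gamma_k^2$ remainder. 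The first genuine task is to replace the ``wrong'' gradient $\nabla f(x_{k-1})$ by $\nabla f(y_k)$: using the identity $x_{k-1}-y_k = (1-\alpha_k)(x_{k-1}-\xbar_{k-1})$ that follows from \eqref{alg_nonconvex:y}, together with Lipschitz continuity, the mismatch is bounded by a multiple of $\|x_{k-1}-\xbar_{k-1}\|$.

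This motivates introducing the auxiliary error $e_k \eqdef x_k - \xbar_k$. Subtracting \eqref{alg_nonconvex:x} and \eqref{alg_nonconvex:xbar} and using \eqref{alg_nonconvex:y} gives the recursion $e_k = (1-\alpha_k)e_{k-1} - (\gamma_k-\beta_k)G(y_k;\xi_k)$, which I would solve through $\Gamma_k$ as $e_k = -\Gamma_k\sum_{t=1}^k \frac{\gamma_t-\beta_t}{\Gamma_t}G(y_t;\xi_t)$ (with the chosen initialization). Two bounds then follow: a linear one, $\|e_k\| \le M\Gamma_k\sum_{t=1}^k \frac{\gamma_t-\beta_t}{\Gamma_t}$, which feeds the final cross term of \eqref{lem_nonconvex:Ineq}; and a squared one obtained by a weighted Cauchy--Schwarz split of the sum against the telescoping identity $\frac{\alpha_t}{\Gamma_t}=\frac{1}{\Gamma_t}-\frac{1}{\Gamma_{t-1}}$, giving $\|e_k\|^2 \le M^2\Gamma_k\sum_{t=1}^k \frac{(\gamma_t-\beta_t)^2}{\Gamma_t\alpha_t}$, which is exactly the source of the $\frac{M^2L\Gamma_k}{2}\sum_{t=1}^k \frac{(\gamma_t-\beta_t)^2}{\Gamma_t\alpha_t}$ term.

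The crux is the biased, dependent inner product $\langle \nabla f(y_k), G(y_k;\xi_k)-\nabla f(y_k)\rangle$, which does not vanish in expectation because $\xi_k$ is correlated with $y_k$. To decouple them I would shift back by the mixing time $\tau=\tau(\gamma_k)$ and telescope through $y_{k-\tau}$, writing this inner product as the sum of (A) $\langle \nabla f(y_k)-\nabla f(y_{k-\tau}),\, G(y_k;\xi_k)-\nabla f(y_k)\rangle$, (B) $\langle \nabla f(y_{k-\tau}),\, G(y_k;\xi_k)-G(y_{k-\tau};\xi_k)\rangle$, (C) $\langle \nabla f(y_{k-\tau}),\, \nabla f(y_{k-\tau})-\nabla f(y_k)\rangle$, and (D) $\langle \nabla f(y_{k-\tau}),\, G(y_{k-\tau};\xi_k)-\nabla f(y_{k-\tau})\rangle$. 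Terms (A)--(C) are bounded by a constant times $\|y_k-y_{k-\tau}\|$ via Lipschitzness and Assumption \ref{assump:bounded_gradient}; I would then control the drift over the mixing window through the one-step identity $y_t-y_{t-1} = -\beta_{t-1}G(y_{t-1};\xi_{t-1}) + \alpha_t e_{t-1}$, so that $\|y_k-y_{k-\tau}\| \le \sum_{t=k-\tau+1}^k(\beta_{t-1}M + \alpha_t\|e_{t-1}\|)$; summing the $\tau$ nearly-constant step sizes feeds the $8LM^2\tau(\gamma_k)\gamma_{k-\tau(\gamma_k)}\gamma_k$ term (from the $\beta$ part) and the $\sum_{t}\alpha_t\Gamma_t\sum_t \frac{\gamma_t-\beta_t}{\Gamma_t}$ cross term (from the $\alpha_t e_{t-1}$ part). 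Term (D) is where Assumption \ref{assump:ergodicity} finally pays off: conditioning on the $\sigma$-field $\mathcal{F}_{k-\tau}$ that makes $y_{k-\tau}$ measurable and using that $\xi_k$ is $\tau(\gamma_k)$ steps ahead, the geometric mixing bound \eqref{notation:tau} gives $\|\Eset[G(y_{k-\tau};\xi_k)\mid\mathcal{F}_{k-\tau}]-\nabla f(y_{k-\tau})\| \le 2M\gamma_k$, hence $\Eset[\text{(D)}]\le 2M^2\gamma_k$.

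I expect the main obstacle to be the bookkeeping in steps (A)--(D): the drift $\|y_k-y_{k-\tau}\|$ must be expanded through the coupled triple $(\alpha_k,\beta_k,\gamma_k)$ and the error sequence $e_k$, and one must verify that the accumulated pieces collapse exactly into the four remainder terms of \eqref{lem_nonconvex:Ineq} with the stated constants. Once these bounds are in hand, I would take total expectations, use $\beta_k\le\gamma_k$ and the monotonicity of $\{\gamma_k\}$ and $\{\beta_k\}$ to simplify the windowed sums (e.g.\ $\sum_{t=k-\tau+1}^k\beta_{t-1}\le \tau\gamma_{k-\tau}$), and collect terms to arrive at \eqref{lem_nonconvex:Ineq}.
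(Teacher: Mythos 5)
Your plan follows the paper's proof almost step for step: the same $L$-smoothness descent inequality, the same error recursion $e_k=(1-\alpha_k)e_{k-1}-(\gamma_k-\beta_k)G(y_k;\xi_k)$ solved through $\Gamma_k$ with a linear bound and a Jensen-type squared bound, the same mixing-time shift of the bias inner product with a conditional-expectation estimate on the decoupled term (your (D)), and the same drift bounds over the window of length $\tau(\gamma_k)$. The one place you deviate is also the one place the bookkeeping breaks. You swap the test vector $\nabla f(x_{k-1})$ for $\nabla f(y_k)$ \emph{inside the bias inner product}, which creates the mismatch
\begin{align*}
\gamma_k\big\langle \nabla f(x_{k-1})-\nabla f(y_k),\,G(y_k;\xi_k)-\nabla f(y_k)\big\rangle \;\le\; 2LM\gamma_k(1-\alpha_k)\|e_{k-1}\| \;\le\; 2LM^2\gamma_k\,\Gamma_{k-1}\sum_{t=1}^{k-1}\frac{\gamma_t-\beta_t}{\Gamma_t}.
\end{align*}
This is a \emph{global} accumulated-error quantity, and it is not dominated by the lemma's cross term $2LM^3\gamma_k\sum_{t=k-\tau(\gamma_k)}^{k-1}\alpha_t\Gamma_t\sum_{t=1}^{k}\frac{\gamma_t-\beta_t}{\Gamma_t}$, whose leading factor is a mixing-window sum: for $\alpha_k=2/(k+1)$ one has $\Gamma_{k-1}\approx 2/k^2$ while $\sum_{t=k-\tau(\gamma_k)}^{k-1}\alpha_t\Gamma_t\approx 4\tau(\gamma_k)/k^3$, which is smaller by a factor of order $\tau(\gamma_k)/k$. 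Unlike the descent-term mismatch $\langle\nabla f(x_{k-1})-\nabla f(y_k),\nabla f(y_k)\rangle$, this term cannot be Young-split against $\|\nabla f(y_k)\|^2$ either; pairing it with $\|G-\nabla f\|\le 2M$ and Young's inequality forces a second copy of the $\|e_{k-1}\|^2$ contribution plus an extra $O(M^2L\gamma_k^2)$, i.e., strictly larger constants than stated. So your claim that the pieces ``collapse exactly into the four remainder terms of \eqref{lem_nonconvex:Ineq} with the stated constants'' is false for your decomposition.

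The paper avoids this entirely by never changing the test vector to $\nabla f(y_k)$: it keeps $\nabla f(x_{k-1})$ in the bias inner product and shifts it back to $\nabla f(x_{k-\tau(\gamma_k)})$ along with the shift $y_k\to y_{k-\tau(\gamma_k)}$, paying only the window cost $2LM\gamma_k\|x_{k-1}-x_{k-\tau(\gamma_k)}\|\le 2LM^2\tau(\gamma_k)\gamma_{k-\tau(\gamma_k)}\gamma_k$ (via $\|x_t-x_{t-1}\|=\gamma_t\|G(y_t;\xi_t)\|\le M\gamma_t$), which is absorbed together with the $y$-drift into the $8LM^2\tau(\gamma_k)\gamma_{k-\tau(\gamma_k)}\gamma_k$ term. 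That is the fix you should adopt. Two smaller points: your variant would still suffice for Theorem~\ref{thm:nonconvex} (under its step sizes your stray term sums to $O(\beta^2K)$ like the others), so this is a failure to prove the lemma as stated rather than a dead end; and your bound for (D) should go through the paper's Corollary~1, which gives bias at most $\gamma_k$ and hence a contribution $M\gamma_k^2$ matching the $M$ in the $(4M^2L+M)\gamma_k^2$ term, rather than the looser $2M\gamma_k$ you quote.
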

\begin{proof}[Sketch of proof.]
A complete analysis of this lemma is presented in the supplementary material. Here, we briefly discuss the main technical challenge in our analysis due to Markov samples, that is, the gradient samples are biased and dependent. First, using Assumption \ref{ass:Lipschitz} with some standard manipulation gives
\begin{align*}
f(x_{k}) &\leq f(x_{k-1}) -\gamma_{k}\left(1 - L\gamma_{k}\right) \|\nabla f(y_{k})\|^2 + 4M^2 L\gamma_{k}^2\\ 
& \quad + \frac{M^2 L\Gamma_{k}}{2}\sum_{t=1}^{k}\frac{(\gamma_{t}-\beta_{t})^2}{\Gamma_{t}\alpha_{t}} - \gamma_{k}\langle\nabla f(x_{k-1}),\,G(y_{k};\xi_{k}) - \nabla f(y_{k})\rangle.
\end{align*}
In the i.i.d settings, since the gradient samples are unbiased and independent, the last term on the right-hand side has a zero expectation. However, in our setting this expectation is different to zero and the samples are dependent. We, therefore, cannot apply the existing techniques to show \eqref{lem_nonconvex:Ineq}. Our key technique to address this challenge is to utilize the geometric mixing time $\tau$ defined in \eqref{notation:tau}. In particular, although the “noise” in our
algorithm is Markovian, its dependence is very weak at samples spaced out at every $\tau$ step. We, therefore, carefully characterize the progress of the algorithm in every $\tau$ step, resulting to the sum over $\tau$ steps on the right-hand side of \eqref{lem_nonconvex:Ineq}.
\end{proof}
To show our result for smooth nonconvex problems, we adopt the randomized stopping rule in \cite{GhadimiL2013}, which is common used in nonconvex optimization. In particular, given a sequence $\{y_{k}\}$ generated by Algorithm~\ref{alg:ASGD_nonconvex} we study the convergence on $y_{R}$, a point randomly selected from this sequence (a.k.a \eqref{thm_nonconvex:pk}). The convergence rate of Algorithm \ref{alg:ASGD_nonconvex} in solving problem \eqref{prob:main} is  stated as follows.   

\begin{thm}\label{thm:nonconvex}
Let $K>0$ be an integer such that 
% \begin{align}
% \begin{aligned}
% &\alpha_{k} = \frac{2}{k+1},\quad \gamma_{k} \in \left[\beta_{k}\,,\, (1+\alpha_{k})\beta_{k}\right],\quad \beta_{k} = \beta =  \frac{1}{\sqrt{K}}\leq \frac{1}{4L},\quad \forall k\geq 1.
% \end{aligned}
% \label{thm_nonconvex:stepsizes}
% \end{align}
\begin{align*}
&\alpha_{k} = \frac{2}{k+1},\quad \gamma_{k} \in \left[\beta_{k}\,,\, (1+\alpha_{k})\beta_{k}\right], \\ & \beta_{k} = \beta =  \frac{1}{\sqrt{K}}\leq \frac{1}{4L},\quad \forall k\geq 1.
\tagthis \label{thm_nonconvex:stepsizes}
\end{align*}
In addition, let $y_{R}$ be randomly selected from the sequence $\{y_{k}\}_{k=1}^{K}$ with probability $p_{k}$ defined as
\begin{align}
p_{k} = \frac{\gamma_{k}(1-L\gamma_{k})}{\sum_{k=1}^{K}\gamma_{k}(1-L\gamma_{k})}\cdot    
\label{thm_nonconvex:pk}
\end{align}
Then $y_{R}$ returned by Algorithm \ref{alg:ASGD_nonconvex} satisfies\footnote{Note that the same rate can be achieved for the quantity $\min_{k}\Eset\left[\|\nabla f(y_{k})\|^2\right]$.}
\begin{align*}
& \Eset\left[\|\nabla f(y_{R})\|^2\right]
\leq \frac{2(\Eset[f(x_{0})] - f^*)\left(4L+\sqrt{K}\right)}{K} + \frac{2M(LM^2(11+16C\log(K))+2)}{\sqrt{K}}. \tagthis    \label{thm_nonconvex:Ineq}
\end{align*}
\end{thm}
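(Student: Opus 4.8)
The plan is to turn the one-step estimate of Lemma~\ref{lem:nonconvex} into a bound on a weighted average of $\Eset[\|\nabla f(y_k)\|^2]$ and then read off the guarantee for $y_R$ from the definition \eqref{thm_nonconvex:pk} of $p_k$. First I would sum \eqref{lem_nonconvex:Ineq} over $k=1,\dots,K$: the terms $\Eset[f(x_k)]-\Eset[f(x_{k-1})]$ telescope, and Assumption~\ref{assump:lower_bound} gives $\Eset[f(x_K)]\geq f^*$, so that
\begin{align*}
\sum_{k=1}^{K}\gamma_k(1-L\gamma_k)\Eset[\|\nabla f(y_k)\|^2]\leq \Eset[f(x_0)]-f^*+\sum_{k=1}^{K}E_k,
\end{align*}
where $E_k$ collects the four error terms on the right of \eqref{lem_nonconvex:Ineq}. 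By \eqref{thm_nonconvex:pk} the left-hand side equals $\big(\sum_{k=1}^K\gamma_k(1-L\gamma_k)\big)\,\Eset[\|\nabla f(y_R)\|^2]$, so dividing by $\sum_{k=1}^K\gamma_k(1-L\gamma_k)$ isolates the quantity of interest; the whole difficulty of the biased, dependent Markov samples has already been absorbed into the lemma, and what remains is careful bookkeeping with the explicit step sizes.

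Next I would exploit \eqref{thm_nonconvex:stepsizes}. With $\alpha_k=2/(k+1)$ the recursion \eqref{notation:Gamma} solves to $\Gamma_k=2/\big(k(k+1)\big)$, which yields the convenient identity $\alpha_t/\Gamma_t=t$; together with $\beta\leq\gamma_k\leq(1+\alpha_k)\beta\leq 2\beta$ and $0\leq\gamma_t-\beta_t\leq\alpha_t\beta$ these are essentially the only facts needed to control every sum. For the denominator, $1-L\gamma_k\geq 1-2L\beta\geq\tfrac12$ (using $\beta\leq 1/(4L)$) and $\gamma_k\geq\beta$ give $\sum_{k=1}^K\gamma_k(1-L\gamma_k)\geq \tfrac12 K\beta=\tfrac12\sqrt K$; hence $(\Eset[f(x_0)]-f^*)/\sum_k\gamma_k(1-L\gamma_k)\leq 2(\Eset[f(x_0)]-f^*)/\sqrt K$, which I then rewrite in the form $2(\Eset[f(x_0)]-f^*)(4L+\sqrt K)/K$ of \eqref{thm_nonconvex:Ineq} via the elementary inequality $1/\sqrt K\leq(4L+\sqrt K)/K$.

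It then remains to bound $\sum_{k=1}^K E_k$ and divide by $\tfrac12\sqrt K$. The term $(4M^2L+M)\gamma_k^2$ sums to a constant multiple of $4M^2L+M$ because $\sum_k\gamma_k^2\leq 2$, and after division it contributes the $4M/\sqrt K$ piece. The accumulated term $\tfrac{M^2L\Gamma_k}{2}\sum_{t=1}^k (\gamma_t-\beta_t)^2/(\Gamma_t\alpha_t)$ collapses neatly: using $(\gamma_t-\beta_t)^2/(\Gamma_t\alpha_t)\leq \alpha_t\beta^2/\Gamma_t=\beta^2 t$ and $\sum_{t=1}^k t=k(k+1)/2$ with $\Gamma_k=2/(k(k+1))$, each summand is at most $M^2L\beta^2/2$, so the sum over $k$ is $O(M^2L)$. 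For the mixing term $8LM^2\tau(\gamma_k)\gamma_{k-\tau(\gamma_k)}\gamma_k$ I would use $\gamma_k\geq\beta=1/\sqrt K$, hence $\tau(\gamma_k)=C\log(1/\gamma_k)\leq\tfrac{C}{2}\log K$, and $\gamma_{k-\tau(\gamma_k)}\gamma_k\leq 4\beta^2$, so this term sums to $O(CLM^2\log K)$ and, after division by $\tfrac12\sqrt K$, yields the $32CLM^3\log K/\sqrt K$ factor; this is the sole source of the $\log K$. Absorbing $M^2\leq M^3$ into single constants then reproduces the second term of \eqref{thm_nonconvex:Ineq}.

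The most delicate part, and the step I expect to be the main obstacle, is the last error term $2LM^3\gamma_k\sum_{t=k-\tau(\gamma_k)}^{k-1}\alpha_t\Gamma_t\sum_{t=1}^k(\gamma_t-\beta_t)/\Gamma_t$. Here one must control the product of a $\tau$-window sum of the decreasing weights $\alpha_t\Gamma_t=4/(t(t+1)^2)$ against the growing sum $\sum_{t=1}^k(\gamma_t-\beta_t)/\Gamma_t\leq\beta\,k(k+1)/2$, and in particular treat the boundary indices $k\leq\tau(\gamma_k)$, where the window is truncated and one invokes the convention on the step sizes for $k\leq 1$. The point to verify is that this contribution is genuinely lower order: bounding $\alpha_t\Gamma_t$ on the window by $O(1/k^3)$ gives a per-step estimate $O(LM^3\beta^2\tau/k)$, whose sum over $k$ is $O(LM^3\beta^2\tau\log K)=O(LM^3\log^2K/K)$, which after division by $\Theta(\sqrt K)$ is dominated by the stated terms. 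Collecting the surviving contributions then gives exactly \eqref{thm_nonconvex:Ineq}.
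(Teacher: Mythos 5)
Your proposal is correct and follows essentially the same route as the paper's proof: sum Lemma~\ref{lem:nonconvex} over $k$, telescope using $\Eset[f(x_K)]\geq f^*$, identify $\Eset[\|\nabla f(y_R)\|^2]$ with the $p_k$-weighted average, bound each error term using $\Gamma_k = 2/(k(k+1))$, $0\leq\gamma_t-\beta_t\leq\alpha_t\beta$, and $\tau(\gamma_k)\leq \tfrac{C}{2}\log K$, and finally divide by $\sum_{k=1}^K\gamma_k(1-L\gamma_k)\geq \tfrac{1}{2}\sqrt{K}$. The only deviations are at the bookkeeping level — you bound the double sums summand-by-summand where the paper swaps the order of summation, your sharper $O(\tau/k^3)$ window estimate makes the last error term $O(\log^2 K/K)$ rather than the paper's $O(1)$, and your closing absorption $M^2\leq M^3$ reproduces exactly the same constant-level slack that already exists between the paper's derived bound ($LM^2\log K$ terms) and its stated inequality \eqref{thm_nonconvex:Ineq} ($LM^3\log K$ terms).
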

\begin{remark}
The rate in Theorem \ref{thm:nonconvex} matches the one presented in \cite{GhadimiL2013} for independent and unbiased gradient samples within a logarithmic factor, which captures the mixing time $\tau$.
\end{remark}
\begin{proof}[Proof of Theorem \ref{thm:nonconvex}]
Using \eqref{notation:Gamma} and \eqref{thm_nonconvex:stepsizes} yields $\Gamma_{k} = 2/k(k+1)$. Thus, using the integral test gives
\begin{align*}
\gamma_{k}\sum_{t=k-\tau(\gamma_{k})}^{k}\alpha_{t}\Gamma_{t} & = \gamma_{k}\sum_{t=k-\tau(\gamma_{k})}^{k}\frac{4}{t(t+1)^2} \leq \frac{2\gamma_{k}}{(k-\tau(\gamma_{k}))^2}. \tagthis \label{thm_nonconvex:Eq0}
\end{align*}
% \begin{align}
% &\Gamma_{k} = \frac{2}{k(k+1)}\cdot\label{thm_nonconvex:gamma_k}  
% \end{align} 
Next, using \eqref{thm_nonconvex:stepsizes} and $\Gamma_{k} = 2/k(k+1)$ we consider
\begin{align*}
\sum_{k=1}^{K}\Gamma_{k}\sum_{t=1}^{k}\frac{(\gamma_{t}-\beta_{t})^2}{\Gamma_{t}\alpha_{t}} & = \sum_{t=1}^{K}\frac{(\gamma_{t}-\beta_{t})^2}{\Gamma_{t}\alpha_{t}}\sum_{k=t}^{K}\Gamma_{k} = \sum_{t=1}^{K}\frac{(\gamma_{t}-\beta_{t})^2}{\Gamma_{t}\alpha_{t}}\sum_{k=t}^{K}\frac{2}{k(k+1)}\notag\\ 
&= \sum_{t=1}^{K}\frac{2(\gamma_{t}-\beta_{t})^2}{\Gamma_{t}\alpha_{t}}\sum_{k=t}^{K}\left(\frac{1}{k}-\frac{1}{k+1}\right)\\
&\leq \sum_{t=1}^{K}\frac{2(\gamma_{t}-\beta_{t})^2}{\Gamma_{t}\alpha_{t}}\frac{1}{t}
\leq 2\sum_{t=1}^{K}\frac{\beta_{t}^2\alpha_{t}^2}{t\Gamma_{t}\alpha_{t}} = 2\beta^2K. \tagthis \label{thm_nonconvex:Eq1a}
\end{align*}
Similarly, using \eqref{thm_nonconvex:Eq0}, $\alpha_{k}\leq 1$, $\gamma_{k}\leq 2\beta_{k} = 2\beta$ we have
\begin{align*}
\sum_{k=1}^{K}\gamma_{k}\sum_{t=k-\tau(\gamma_{k})}^{k}\alpha_{t}\Gamma_{t}\sum_{t=1}^{k}\frac{\gamma_{t}-\beta_{t}}{\Gamma_{t}} & \leq  \sum_{k=1}^{K}\frac{2\gamma_{k}}{(k-\tau(\gamma_{k}))^2}\sum_{t=1}^{k}\frac{\gamma_{t}-\beta_{t}}{\Gamma_{t}}\notag\\
&\leq \sum_{t=1}^{K}\frac{2\beta^2\alpha_{t}}{\Gamma_{t}}\sum_{k=t}^{K}\frac{1}{(k-\tau(\gamma_{k}))^2} \\ 
& \leq2\beta^2\sum_{t=1}^{K}\frac{t}{t-\tau(\gamma_{t})}\leq 4\beta^2K. \tagthis \label{thm_nonconvex:Eq1b}
\end{align*}
Moreover, using \eqref{notation:tau} and $\gamma_{k}\geq\beta_{k} = \beta$ we have
$\tau(\gamma_{k})=  C\log\left(\frac{1}{\gamma_{k}}\right) \leq C\log\left(1/\beta\right)$, which gives
\begin{align*}
\sum_{k=1}^{K}\tau(\gamma_{k})\gamma_{k-\tau(\gamma_{k})}\gamma_{k} & \leq 4C\sum_{k=1}^{K}\beta_{k}\beta_{k-\tau(\gamma_{k})}\log(k) = 2C\beta^2K\log(K). \tagthis \label{thm_nonconvex:Eq1c}  
\end{align*}
Since $\alpha_{k}\leq 1$ for $k\geq 1$ we have
$\sum_{k=1}^{K}\gamma_{k}^2 \leq 2\beta^2K.$ We now use the relations \eqref{thm_nonconvex:Eq1a}--\eqref{thm_nonconvex:Eq1c} to derive \eqref{thm_nonconvex:Ineq}. Indeed, summing up both sides of \eqref{lem_nonconvex:Ineq} over $k$ from $1$ to $N$ and reorganizing yield
% \begin{align}
% &\sum_{k=1}^{K}\gamma_{k}\left(1 - L\gamma_{k}\right)\Eset\left[\|\nabla f(y_{k})\|^2\right]\notag\\ 
% &\leq \Eset[f(x_{0})] - \Eset[f(x_{K})] + (4LM^2+M)\sum_{k=1}^{K}\gamma_{k}^2 + 8LM^2\sum_{k=1}^{K}\tau(\gamma_{k})\gamma_{k-\tau(\gamma_{k})}\gamma_{k}\notag\\
% &\quad + \sum_{k=1}^{K}\frac{M^2 L\Gamma_{k}}{2}\sum_{t=1}^{k}\frac{(\gamma_{t}-\beta_{t})^2}{\Gamma_{t}\alpha_{t}} + 2M^3L\sum_{k=1}^{K}\gamma_{k}\sum_{t=k-\tau(\gamma_{k})}^{k}\alpha_{t}\Gamma_{t}\sum_{t=1}^{k}\frac{(\gamma_{t}-\beta_{t})^2}{\Gamma_{t}\alpha_{t}}\notag\\
% &\leq (\Eset[f(x_{0})] - f^*) + (9LM^2 + 2M+M^3)\beta^2K +  16CLM^2\beta^2K\log(K),\label{thm_nonconvex:Eq1}
% \end{align}
\begin{align*}
&\quad \sum_{k=1}^{K}\gamma_{k}\left(1 - L\gamma_{k}\right)\Eset\left[\|\nabla f(y_{k})\|^2\right] \notag\\
&\leq \Eset[f(x_{0})] - \Eset[f(x_{K})] + (4LM^2+M)\sum_{k=1}^{K}\gamma_{k}^2
+ 8LM^2\sum_{k=1}^{K}\tau(\gamma_{k})\gamma_{k-\tau(\gamma_{k})}\gamma_{k}\notag\\
&\quad + \sum_{k=1}^{K}\frac{M^2 L\Gamma_{k}}{2}\sum_{t=1}^{k}\frac{(\gamma_{t}-\beta_{t})^2}{\Gamma_{t}\alpha_{t}}  + 2M^3L\sum_{k=1}^{K}\gamma_{k}\sum_{t=k-\tau(\gamma_{k})}^{k}\alpha_{t}\Gamma_{t}\sum_{t=1}^{k}\frac{(\gamma_{t}-\beta_{t})}{\Gamma_{t}}\notag\\
&\leq (\Eset[f(x_{0})] - f^*) + (11LM^2 + 2M)\beta^2K +  16CLM^2\beta^2K\log(K), \tagthis \label{thm_nonconvex:Eq1}
\end{align*}
where we use $\Eset[f(x_{K})] \geq f^*$ and $\beta\leq 1/4L$. Dividing both sides by $\sum_{k=1}^{K}\gamma_{k}(1-L\gamma_{k})$ gives
\begin{align*}
\frac{\sum_{k=1}^{K}\gamma_{k}\left(1 - L\gamma_{k}\right)\Eset\left[\|\nabla f(y_{k})\|^2\right]}{\sum_{k=1}^{K}\gamma_{k}(1-L\gamma_{k})}
& \leq \frac{(\Eset[f(x_{0})] - f^*) }{\sum_{k=1}^{K}\gamma_{k}(1-L\gamma_{k})} + \frac{(11LM^2+2M)\beta^2K }{\sum_{k=1}^{K}\gamma_{k}(1-L\gamma_{k})} \notag\\ 
&\quad  + \frac{16CLM^2\beta^2K\log(K)}{\sum_{k=1}^{K}\gamma_{k}(1-L\gamma_{k})}\cdot
\end{align*}
Using \eqref{thm_nonconvex:stepsizes} yields $1-L\gamma_{k}\geq 1/2$ and $\sum_{k=1}^{K}\gamma_{k}(1-L\gamma_{k})\geq \sum_{k=1}^{K} \beta_{k}/2 = K\beta/2$. Thus, we obtain
\begin{align*}
\frac{\sum_{k=1}^{K}\gamma_{k}\left(1 - L\gamma_{k}\right)\Eset\left[\|\nabla f(y_{k})\|^2\right]}{\sum_{k=1}^{K}\gamma_{k}(1-L\gamma_{k})}
&\leq \frac{2(\Eset[f(x_{0})] - f^*) }{K\beta} + \frac{2(11LM^2+2M)\beta^2K }{K\beta}  \\ & \quad + \frac{32CLM^2\beta^2K\log(K)}{K\beta}\notag\\
&\leq \frac{2(\Eset[f(x_{0})] - f^*)\left(4L+\sqrt{K}\right)}{K} \\ & \quad + \frac{2(11LM^2+2M)}{\sqrt{K}} + \frac{32CLM^2\log(K)}{\sqrt{K}},
\end{align*}
which by using \eqref{thm_nonconvex:pk} gives \eqref{thm_nonconvex:Ineq}.
\end{proof}

\section{Convergence analysis: Convex case}
\label{sec:convex}
\begin{algorithm}[h]
\caption{{\sf AMGD} for convex problems}
  {\bfseries Initialize:} Set arbitrarily $x_{k},\xbar_{k}\in\Xcal$, step sizes $\{\alpha_{k},\beta_{k},\gamma_{k}\}$ for $k\leq 1$, and an integer $K\geq 1$\vspace{0.2cm}\\
  {\bfseries Iterations:} For $k = 1,\ldots,K$ do
   \begin{align}
    y_{k}  &= (1-\beta_{k})\xbar_{k-1} + \beta_{k}x_{k-1}\label{alg:y}\\
    v_{k} & = \gamma_{k}\left[\langle G(y_{k};\xi_{k})\,,\,x - y_{k}\rangle + \mu V(y_{k},x)\right] \\
    x_{k} &= \arg\min_{x\in\Xcal}\Big\{v_k + V(x_{k-1},x)\Big\}\label{alg:x} \\
    \xbar_{k} &= (1-\alpha_{k})\xbar_{k-1} + \alpha_{k}x_{k}\label{alg:xbar}
    \end{align}
   {\bfseries Output:} $\xbar_{k}$    
\label{alg:ASGD_convex}
\end{algorithm}
%%%
In this section, we study Algorithm \ref{alg:ASGD_convex} for solving  \eqref{prob:main} when $f$ is convex and $\Xcal$ is compact.  For simplicity we consider $V$ in Algorithm \ref{alg:ASGD_convex} is the Euclidean distance, i.e., $\psi(x) = \frac{1}{2} \|x\|^2$ and $V(y,x) = \frac{1}{2}\|y-x\|^2$, although our results can be extended to the general Bregman distance $V$. Since $\Xcal$ is compact, there exist $D,M>0$
\begin{align}
\hspace{-0.3cm}D = \max_{x\in\Xcal}\|x\|,\;\; \|G(x;\xi)\|\leq M,\;\forall \xi\in\Xi,\; \forall x\in\Xcal.     \label{notation:DM}
\end{align}
In addition, let 
$x^* = \arg\min_{x\in\Xcal} f(x).$ We assume that  $\{\alpha_{k},\beta_{k}\}$ are chosen such that $\alpha_{1} = 1$ and 
\begin{align}
\frac{\beta_{k}(1-\alpha_{k})}{\alpha_{k}(1-\beta_{k})} = \frac{1}{1+\mu\gamma_{k}},\qquad 1+\mu\gamma_{k}  > L\alpha_{k}\gamma_{k}, \label{condition:stepsizes}
\end{align}
where $\mu \geq 0$ and $L$ is given in \eqref{ass_Lipschitz:Ineq1}. The key idea to derive the results in this section is to utilize Assumption \ref{assump:ergodicity} to the handle the Markovian ``noise", similar to the one in Section \ref{sec:nonconvex}. For an ease of exposition, we present the analysis of the results in this section to the supplementary material.

% In the sequel we denote by $\Delta_{k}=G(y_{k};\xi_{k})-\nabla f(y_{k})$, the difference between the gradient sample and its expected value w.r.t the stationary distribution $\pi$. 
%\subsection{Smooth convex functions}
We now study the rates of Algorithm \ref{alg:ASGD_convex} when the function $f$ is only convex and  Assumption \ref{ass:Lipschitz} holds. In this case $\partial f(\cdot) = \nabla f(\cdot)$ and $\mu = 0$. Since $\mu = 0$, Eq.\ \eqref{condition:stepsizes} gives $\beta_{k} = \alpha_{k}$ and $y_{k} = \xbar_{k}$. The convergence rate of Algorithm \ref{alg:ASGD_convex} in this case is given below. 
\begin{thm}[Convex functions]\label{thm:convex}
Let Assumptions \ref{assump:ergodicity}--\ref{ass:Lipschitz} hold. Suppose that the step sizes are chosen as
\begin{align}
\alpha_{k} = \frac{2}{k+1},\quad \gamma_{k} = \frac{1}{2L\sqrt{k+1}}\cdot\label{thm_convex:stepsizes}
\end{align}
Then we have for all $k\geq 1$
\begin{align*}
f(\xbar_{k}) - f(x^*) & \leq \frac{f(\xbar_{0})+4LD}{2k(k+1)} + \frac{2(D+2M^2)}{3L\sqrt{k}} + \frac{2(4D^2L + M^2)(L+1)\log(2L\sqrt{k})}{\sqrt{k}}. \tagthis \label{thm_convex:Ineq}
\end{align*}
\end{thm}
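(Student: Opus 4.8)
The plan is to mirror the template of Lemma~\ref{lem:nonconvex}: first derive a deterministic one-step recursion for the optimality gap from smoothness, convexity, and the prox optimality condition, and then control the single Markovian ``noise'' inner product that survives by exploiting the geometric mixing time \eqref{notation:tau}. Throughout I specialize to $\mu=0$, so that $\beta_k=\alpha_k$, the update \eqref{alg:x} reduces to the Euclidean projection $x_k=\arg\min_{x\in\Xcal}\{\gamma_k\langle G(y_k;\xi_k),x\rangle+\frac12\|x-x_{k-1}\|^2\}$, and the averaging steps give the identity $\xbar_k-y_k=\alpha_k(x_k-x_{k-1})$.

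First I would establish the basic recursion. Applying $L$-smoothness of $f$ to $f(\xbar_k)$, substituting $\xbar_k-y_k=\alpha_k(x_k-x_{k-1})$, and then using convexity of $f$ at $y_k$ twice (against $\xbar_{k-1}$ and against $x^*$, weighted by $1-\alpha_k$ and $\alpha_k$) as in the standard accelerated analysis, yields
\begin{align*}
f(\xbar_k)-f(x^*)\leq (1-\alpha_k)\big(f(\xbar_{k-1})-f(x^*)\big) + \alpha_k\langle\nabla f(y_k),x_k-x^*\rangle + \frac{L\alpha_k^2}{2}\|x_k-x_{k-1}\|^2.
\end{align*}
The three-point inequality for the projection defining $x_k$ gives $\gamma_k\langle G(y_k;\xi_k),x_k-x^*\rangle\leq\frac12\|x_{k-1}-x^*\|^2-\frac12\|x_k-x^*\|^2-\frac12\|x_k-x_{k-1}\|^2$. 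Writing $\nabla f(y_k)=G(y_k;\xi_k)+\big(\nabla f(y_k)-G(y_k;\xi_k)\big)$ and invoking the condition $L\alpha_k\gamma_k<1$ from \eqref{condition:stepsizes} (which holds for \eqref{thm_convex:stepsizes}, since $L\alpha_k\gamma_k=(k+1)^{-3/2}$) to render the coefficient of $\|x_k-x_{k-1}\|^2$ nonpositive, I obtain a recursion whose only problematic term is $\alpha_k\langle\nabla f(y_k)-G(y_k;\xi_k),x_k-x^*\rangle$.

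The main obstacle is bounding the expectation of this last term, since $G(y_k;\xi_k)$ is both biased and dependent on the past. I would split it as $\langle\nabla f(y_k)-G(y_k;\xi_k),x_k-x_{k-1}\rangle+\langle\nabla f(y_k)-G(y_k;\xi_k),x_{k-1}-x^*\rangle$. The first piece is controlled pathwise from $\|\nabla f(y_k)-G(y_k;\xi_k)\|\leq 2M$ and $\|x_k-x_{k-1}\|\leq\gamma_k M$, contributing an $O(M^2\alpha_k\gamma_k)$ term. For the second piece, $x_{k-1}$ and $y_k$ are measurable with respect to the history while the bias comes from $\xi_k$ not being drawn from $\pi$; here I would condition on the state $\tau(\gamma_k)$ steps in the past, invoke geometric mixing \eqref{notation:tau} so that the conditional law of $\xi_k$ lies within $\gamma_k$ of $\pi$ in total variation, and absorb the replacement of $(y_k,x_{k-1})$ by their $\tau(\gamma_k)$-steps-earlier counterparts using the Lipschitz bounds of Assumption~\ref{ass:Lipschitz} together with the per-step drift $\|x_t-x_{t-1}\|\leq\gamma_t M$. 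This produces error terms proportional to $MD\gamma_k$ and $\tau(\gamma_k)\gamma_{k-\tau(\gamma_k)}$, in direct analogy with the $8LM^2\tau(\gamma_k)\gamma_{k-\tau}\gamma_k$ and $2LM^3$ terms of Lemma~\ref{lem:nonconvex}.

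Finally I would divide the recursion by $\Gamma_k$, using $(1-\alpha_k)/\Gamma_k=1/\Gamma_{k-1}$, and telescope from $1$ to $k$. With $\alpha_k=2/(k+1)$ one has $\Gamma_k=2/(k(k+1))$ and $\alpha_k/\Gamma_k=k$; the telescoped distance terms $\sum_t \frac{\alpha_t}{2\gamma_t\Gamma_t}\big(\|x_{t-1}-x^*\|^2-\|x_t-x^*\|^2\big)$ are summed by Abel summation using $\|x_t-x^*\|\leq 2D$ and the monotonicity of $\frac{\alpha_t}{2\gamma_t\Gamma_t}=tL\sqrt{t+1}$, giving the $O(D^2L/\sqrt{k})$ contribution after multiplying back by $\Gamma_k$. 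The $O(M^2)$ noise terms sum to the $\frac{2(D+2M^2)}{3L\sqrt{k}}$ term via $\sum_{t\leq k}\sqrt{t}=O(k^{3/2})$, and the mixing terms, with $\tau(\gamma_k)=C\log(1/\gamma_k)=C\log(2L\sqrt{k+1})$, contribute the $\log(2L\sqrt{k})/\sqrt{k}$ factor; substituting \eqref{thm_convex:stepsizes} then yields \eqref{thm_convex:Ineq}. I expect the bookkeeping of the mixing-time error, namely tracking the iterate drift over $\tau(\gamma_k)$ steps while keeping the constants sharp, to be the most delicate part of the argument.
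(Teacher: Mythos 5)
Your proposal is sound and, in its two main pillars, coincides with the paper's own argument: your one-step recursion is exactly the $\mu=0$, Euclidean-distance specialization of the paper's Lemma~\ref{lem:f_xbar} (the Lan-style acceleration analysis: smoothness at $\xbar_k$, convexity at $y_k$ against $\xbar_{k-1}$ and $x^*$, the prox three-point inequality, and $L\alpha_k\gamma_k<1$ to cancel the $\|x_k-x_{k-1}\|^2$ term), and your mixing-time treatment of $\Eset[\langle \nabla f(y_k)-G(y_k;\xi_k),\,x_{k-1}-x^*\rangle]$ --- shift back $\tau(\gamma_k)$ steps, apply \eqref{appendix:cor:mixing:ineq} conditionally, and pay Lipschitz-times-drift for the substitution --- is precisely the paper's Lemma~\ref{apx_lem_convex:mixing_grad}, whose bound $2D\gamma_k+2(4D^2L+M^2)\tau(\gamma_k)\gamma_{k-\tau(\gamma_k)}$ matches your claimed error terms. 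Where you genuinely depart from the paper is the telescoping of the distance terms, and your route is arguably the more robust one. The paper funnels these through Lemma~\ref{lem:f_xbar}, whose telescoping relies on the monotonicity condition \eqref{lem_f_xbar:stepsizes}; but with \eqref{thm_convex:stepsizes} and $\mu=0$ one has $\alpha_k/(\gamma_k\Gamma_k)=2Lk\sqrt{k+1}$, which is \emph{increasing} in $k$, so \eqref{lem_f_xbar:stepsizes} fails for precisely the step sizes of Theorem~\ref{thm:convex} (and the appendix never displays the final assembly of the two lemmas into \eqref{thm_convex:Ineq}). Your Abel summation --- exploiting that $\alpha_t/(2\gamma_t\Gamma_t)=Lt\sqrt{t+1}$ is increasing and that $\|x_t-x^*\|\le 2D$ on the compact $\Xcal$, so the weighted telescoped sum is at most $4D^2Lk\sqrt{k+1}$, i.e.\ $O(D^2L/\sqrt{k})$ after multiplying back by $\Gamma_k$ --- sidesteps that condition entirely and is what actually makes a bound of the form \eqref{thm_convex:Ineq} (whose $4D^2L$ coefficient can absorb this contribution) go through. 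The remaining discrepancies are cosmetic: you bound the cross term $\alpha_k\langle G(y_k;\xi_k)-\nabla f(y_k),\,x_{k-1}-x_k\rangle$ pathwise via the prox-step estimate $\|x_k-x_{k-1}\|\le M\gamma_k$, whereas the paper absorbs it by Young's inequality into its $4M^2\gamma_t\alpha_t/\bigl(\Gamma_t(1+\mu\gamma_t-L\gamma_t\alpha_t)\bigr)$ term; both give $O(M^2\alpha_k\gamma_k)$ and lead to the same $O(1/\sqrt{k})$ and $O(\log(2L\sqrt{k})/\sqrt{k})$ contributions after summation.
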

%\subsection{Strongly convex functions}
Finally, we now provide the rates of Algorithm \ref{alg:ASGD_convex} when $f$ is strongly convex.  
\begin{ass}\label{ass:sc}
There exists a constant $\mu > 0$ s.t. $\forall x,y$ and $g(x)\in\partial f(x)$ we have
\begin{align}
\frac{\mu}{2}\|y-x\|^2\leq f(y) - f(x) - \langle g(x), y - x\rangle.\label{ass_sc:Ineq}
\end{align}
\end{ass}
% The rate of Algorithm \ref{alg:ASGD_convex} in this case is stated as follows.
\begin{thm}[Strong convexity]\label{thm:sc}
Suppose that Assumptions \ref{assump:ergodicity}, \ref{ass:Lipschitz}, and \ref{ass:sc} hold. Let the step sizes be
\begin{align*}
\alpha_{k} = \frac{2}{k+1},\quad \gamma_{k} = \frac{2}{\mu k},\quad \beta_{k} = \frac{\alpha_{k}}{\alpha_{k}+(1-\alpha_{k})(1+\mu\gamma_{k})}. \tagthis \label{thm_sc:stepsizes}
\end{align*}
Then we have for all $k\geq 1$
% \begin{align}
%  f(\xbar_{k}) - f(x^*) &\leq \frac{2f(\xbar_{0}) + 6\mu D}{k(k+1)}+ \frac{2D + 10M^2 + 8\mu MD}{\mu(k+1)} \\ 
% & \qquad + \frac{4(M^2 +2\mu MD+12\mu LD^2)
% (2+\mu)\log(\frac{\mu(k+1)}{2})}{\mu k}\cdot\label{thm_sc:Ineq}
% \end{align}
\begin{align*}
f(\xbar_{k}) - f(x^*) &\leq \frac{2f(\xbar_{0}) + 6\mu D}{k(k+1)} + \frac{4(M^2 +2\mu MD+12\mu LD^2)
(2+\mu)\log(\frac{\mu(k+1)}{2})}{\mu k}\\
& \quad + \frac{2D + 10M^2 + 8\mu MD}{\mu(k+1)}\cdot \tagthis \label{thm_sc:Ineq}
\end{align*}
\end{thm}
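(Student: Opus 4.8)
The plan is to run the Lyapunov-type analysis underlying Nesterov's accelerated method, but driven by the prox/mirror-descent update \eqref{alg:x} and carrying the Markovian bias term along exactly as in the sketch of Lemma \ref{lem:nonconvex}. The starting point is the optimality (three-point) inequality for the subproblem \eqref{alg:x}: with the Euclidean $V$, the minimizer $x_{k}$ satisfies, for every $x\in\Xcal$, the bound $\gamma_{k}\langle G(y_{k};\xi_{k}),\,x_{k}-x\rangle + \gamma_{k}\mu V(y_{k},x) \leq V(x_{k-1},x) - (1+\mu\gamma_{k})V(x_{k},x) - V(x_{k-1},x_{k})$, which I would evaluate at $x=x^{*}$. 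Writing $G(y_{k};\xi_{k}) = \nabla f(y_{k}) + \big(G(y_{k};\xi_{k})-\nabla f(y_{k})\big)$, the $\nabla f(y_{k})$ contribution is bounded below using strong convexity \eqref{ass_sc:Ineq} (producing $f(y_{k})-f(x^{*})$ together with a $\tfrac{\mu}{2}\|y_{k}-x^{*}\|^{2}$ term that is absorbed by the $\mu V$ terms on the left), while the residual inner product against $G(y_{k};\xi_{k})-\nabla f(y_{k})$ is deferred as the bias.

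Next I would turn this into a telescoping recursion. Using smoothness \eqref{ass_Lipschitz:Ineq1} to upper bound $f(\xbar_{k})$ in terms of $f(y_{k})$ and $\|\xbar_{k}-y_{k}\|^{2}$, together with the convex-combination identities \eqref{alg:y} and \eqref{alg:xbar} (which express $\xbar_{k}-y_{k}$ as an $\alpha_{k}$-scaled multiple of $x_{k}-x_{k-1}$), the per-step inequality becomes a decrease of a potential of the form $\Gamma_{k}^{-1}\big(f(\xbar_{k})-f(x^{*})\big) + \tfrac{1}{2}c_{k}\|x_{k}-x^{*}\|^{2}$ for a suitable nonnegative sequence $c_{k}$, up to bias and step-size error terms. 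The relation \eqref{condition:stepsizes} among $\alpha_{k},\beta_{k},\gamma_{k}$ is precisely what makes the $V(x_{k},x^{*})$ coefficient telescope against $V(x_{k-1},x^{*})$ once the $(1+\mu\gamma_{k})$ factor is distributed; dividing by $\Gamma_{k}$ and summing from $1$ to $k$ then collapses the function-value and distance terms into the initial data $f(\xbar_{0})$ and $\|x_{0}-x^{*}\|^{2}\leq 4D^{2}$ from \eqref{notation:DM}.

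The crux, as in Lemma \ref{lem:nonconvex}, is controlling the accumulated bias $\sum_{k}\Gamma_{k}^{-1}\gamma_{k}\,\Eset\big[\langle x_{k}-x^{*},\,G(y_{k};\xi_{k})-\nabla f(y_{k})\rangle\big]$, whose summands have nonzero, dependent conditional means. First I would peel off $\langle\,\cdot\,,x_{k}-x_{k-1}\rangle$, which is $\Ocal(\gamma_{k}^{2}M^{2})$ since $\|x_{k}-x_{k-1}\|=\Ocal(\gamma_{k}M)$ from the prox step, leaving the past-measurable vector $x_{k-1}-x^{*}$. Then, setting $\tau=\tau(\gamma_{k})$, I insert the $\tau$-step-back iterate $y_{k-\tau}$ via $G(y_{k};\xi_{k})-\nabla f(y_{k}) = \big(G(y_{k};\xi_{k})-G(y_{k-\tau};\xi_{k})\big) + \big(G(y_{k-\tau};\xi_{k})-\nabla f(y_{k-\tau})\big) + \big(\nabla f(y_{k-\tau})-\nabla f(y_{k})\big)$. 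The first and third brackets are bounded by $L\|y_{k}-y_{k-\tau}\|$ through Assumption \ref{ass:Lipschitz}, and chaining \eqref{alg:y}--\eqref{alg:xbar} with \eqref{notation:DM} gives $\|y_{k}-y_{k-\tau}\|=\Ocal\!\big(\sum_{t=k-\tau}^{k-1}(\alpha_{t}D+\gamma_{t}M)\big)$; the middle bracket is handled by conditioning on the history up to time $k-\tau$, where $y_{k-\tau}$ is measurable and $\xi_{k}$ is $\tau$ steps ahead, so the geometric mixing bound \eqref{notation:tau} with $\|G\|\leq M$ forces its conditional expectation to be $\Ocal(M\gamma_{k})$. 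This is the step I expect to be the main obstacle: one must simultaneously track the Lipschitz drift over the $\tau$-window and the weights $\Gamma_{k}^{-1}=\tfrac{1}{2}k(k+1)$, which grow while $\gamma_{k}=2/(\mu k)$ shrinks, and verify that their product stays summable rather than amplifying the error.

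Finally I would substitute the step sizes \eqref{thm_sc:stepsizes}, for which \eqref{notation:Gamma} gives $\Gamma_{k}=2/(k(k+1))$ and $\tau(\gamma_{k})=C\log(1/\gamma_{k})=C\log(\mu k/2)$. The initial-condition terms carry the $\Gamma_{k}$ factor and produce the $1/(k(k+1))$ contribution; the bias terms of order $\tau(\gamma_{k})\gamma_{k}^{2}$, summed against the weights, yield the $\log(\mu(k+1)/2)/(\mu k)$ contribution; and the remaining $\gamma_{k}^{2}$-type error terms give the $1/(\mu(k+1))$ contribution. Collecting the constants from \eqref{notation:DM} and \eqref{ass_sc:Ineq} into the groupings $M^{2}+2\mu MD+12\mu LD^{2}$ and $2D+10M^{2}+8\mu MD$, and taking total expectation where the mixing step required conditioning, reproduces the right-hand side of \eqref{thm_sc:Ineq}. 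The net effect is that the strongly convex $\Ocal(1/k)$ rate of the i.i.d.\ accelerated method is preserved up to the single $\log$ factor encoding the mixing time.
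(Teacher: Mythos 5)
Your proposal follows essentially the same route as the paper's own proof: a Lan-style potential/telescoping analysis of the accelerated prox step with the $\Gamma_{k}$ weights and step-size conditions (the paper's Lemma \ref{lem:f_xbar}), a mixing-time decomposition of the bias inner product into Lipschitz-drift pieces over the $\tau(\gamma_{k})$-window plus a conditionally-mixed piece bounded via \eqref{appendix:cor:mixing:ineq} (the paper's Lemma \ref{apx_lem_sc:mixing_grad}), and the same step-size substitution and summation at the end. The only details to repair when writing it out are algebraic: for $\mu>0$ the correct identity is $\xbar_{k}-y_{k}=\alpha_{k}(x_{k}-\tilde{x}_{k-1})$ with $\tilde{x}_{k-1}=\frac{1}{1+\mu\gamma_{k}}x_{k-1}+\frac{\mu\gamma_{k}}{1+\mu\gamma_{k}}y_{k}$ rather than $\alpha_{k}(x_{k}-x_{k-1})$, and the prox-step displacement bound picks up the proximal term, $\|x_{k}-x_{k-1}\|\leq(M+2\mu D)\gamma_{k}$.
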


\begin{remark}
We note that in Theorem \ref{thm:convex}, {\sf ASGD} has the same worst case convergence rate as compared to {\sf SGD}, i.e., $\Ocal(1/\sqrt{k})$. However, {\sf ASGD} has much better rate on the initial condition than {\sf SGD}, i.e., $\Ocal(1/k^2)$ versus $\Ocal(1/k)$. Similar observation holds for Theorem \ref{thm:sc}. This gain is very important, e.g., in improving the data efficiency of RL algorithms as illustrated in Section \ref{sec:simulation:TD}, where we study temporal difference methods.    
\end{remark}

\section{Numerical experiments}\label{sec_experiment}
In this section, we apply the proposed accelerated Markov gradient methods for solving a number of problems in {\sf RL}, where the samples are taken from Markov processes. In particular, we consider the usual setup of {\sf RL} where the environment is modeled by a Markov decision process ({\sf MDP}) \cite{Sutton2018}. Let $\Scal$ and $\Acal$ be the (finite) set of states and action. We denote by $\pi_{\theta}(s,a) = Pr(a_{k} = a|s_{k} = s,\theta)$ the randomized policy parameterized by $\theta$, where $s\in\Scal$ and $a\in\Acal$. The goal is to find $\theta$ to maximize 
\begin{align*}
f(\pi_{\theta})  \triangleq \Eset_{s_{0}}\left[V^{\pi_{\theta}}(s_{0})\right]\quad \text{where}\quad V^{\pi_{\theta}}(s_{0})  \triangleq  \Eset\left[\sum_{k=0}^{\infty}\gamma^{k}r_{k}\,|\,s_{0},\pi_{\theta}\right],
\end{align*}
$\gamma$ is the discounted factor and $r_{k}$ is the reward returned by the environment at time $k$. We study the accelerated variants of on-policy temporal difference learning (using Algorithm \ref{alg:ASGD_convex}) and Monte-Carlo policy gradient (REINFORCE) methods  (using Algorithm \ref{alg:ASGD_nonconvex}), and compare their performance with the classic (non-accelerated) counterparts. In all the experiments we consider, the proposed accelerated variants of RL algorithms outperform the classic ones, which agrees with our theoretical results in Theorems \ref{thm:nonconvex} and \ref{thm:convex}. 

% We first present below the general setup of all experiments considered in this section.

% \begin{figure}[htp!]
% \begin{center}
% \includegraphics[width = 0.49\textwidth]{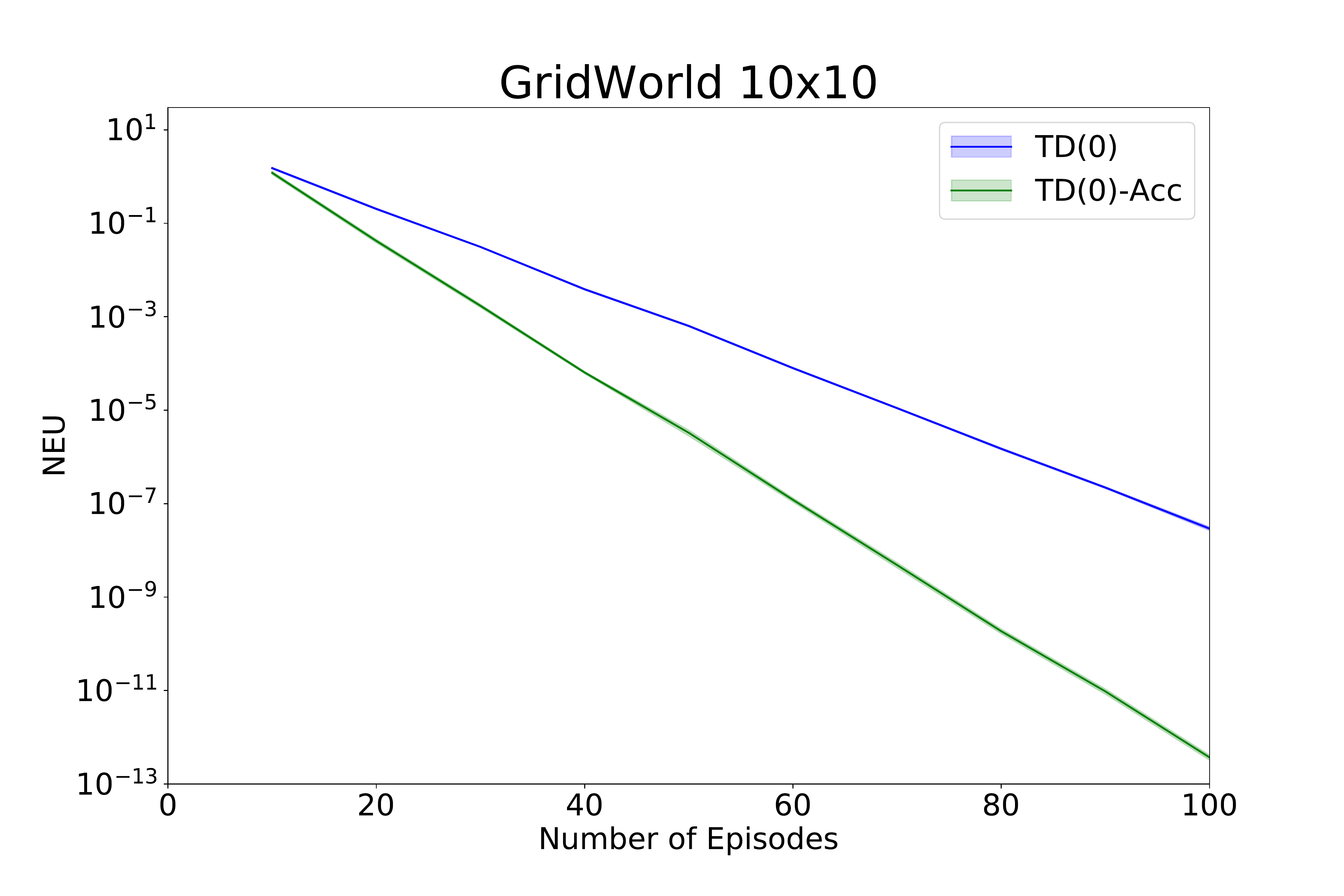}
% \includegraphics[width = 0.49\textwidth]{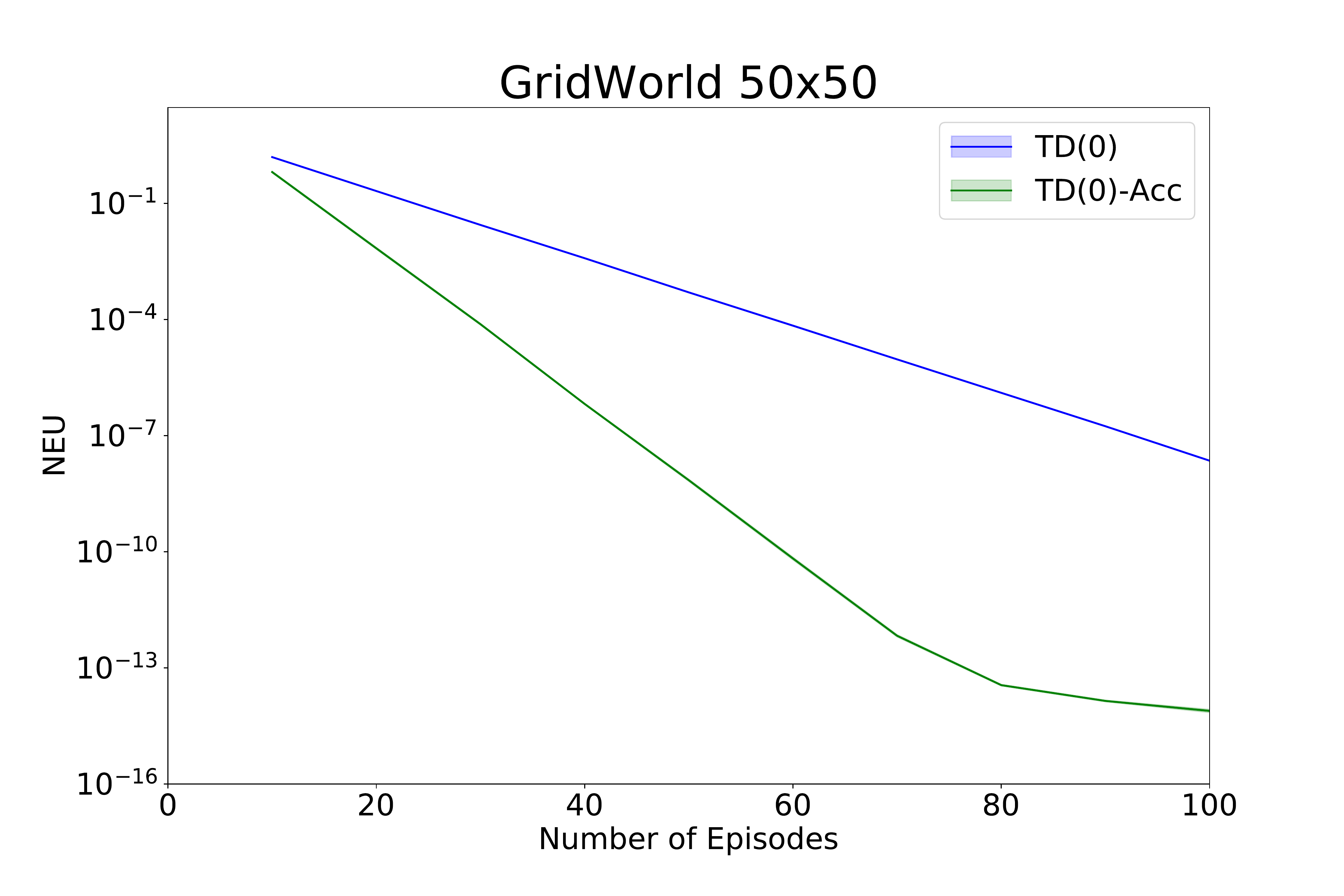}
% \caption{The performance of two TD(0) variants on GridWorld with sizes $10\times10$ (left) and $50\times50$ (right).}
% \label{fig:gridworld}
% \end{center}
% \end{figure}

\begin{figure}[t]


\begin{center}
\includegraphics[width = 0.49\textwidth]{figs/GridWorld_10x10_NEU.pdf}
\includegraphics[width = 0.49\textwidth]{figs/GridWorld_50x50_NEU.pdf}
\caption{The performance of two TD(0) variants on GridWorld with sizes $10\times10$ (left) and $50\times50$ (right).}
\label{fig:gridworld}
\end{center}
\end{figure}

\textbf{General setup:} For each simulation, we run the algorithm 10 times with the same initial policy and record the performance measures. The performance of each algorithm is specified by averaging the metric over the number of episodes. Here, an episode is defined as the set of state-action pairs collected from beginning until the terminal state or a specified episode is reached. The plots consist of the mean with 90\% confidence interval (shaded area) of the performance metric. For REINFORCE-based methods, we randomly generate an initial policy represented by a neural network.

% For solving this problem, we consider 
% a popular method in reinforcement learning, namely, Monte-Carlo policy gradient (or REINFORCE), an equivalent version of {\sf SGD} algorithm in reinforcement learning \cite{Williams1992,Sutton2018}. Our goal is to compare the performance of this classic REINFORCE algorithm with its accelerated variant using our proposed approach. 

\subsection{Accelerated temporal difference learning}\label{sec:simulation:TD}
One of the central problems in {\sf RL} is the so-called policy evaluation problem, where we want to estimate the vector value function $V^{\pi_{\theta}}$ for a fixed policy $\pi_{\theta}$. Temporal difference learning ({\sf TD}($\lambda$)), originally proposed by Sutton \cite{Sutton1988_TD}, is one of the most efficient and practical methods for policy evaluation. It is shown in \cite{Ollivier2018} that if the underlying Markov process is reversible, {\sf TD} learning is a gradient descent method. In addition, under linear function approximation this {\sf TD} method can be viewed as gradient descent for solving a strongly convex quadratic problem \cite{Tsitsiklis1997_TD}. In this problem, the data tuple $\{s_{k},a_{k},s_{k+1}\}$ generated by the {\sf MDP} is $\xi_{k}$ in our model; see \cite{Tsitsiklis1997_TD} for more details.  

For our simulation, we consider the policy evaluation problem over the GridWorld environment \cite[Example 4.1]{Sutton2018}, where the agent is placed in a grid and wants to reach a goal from an initial position. The starting and goal positions are fixed at the top-left and bottom-right corners, respectively. We implement the one-step {\sf TD} (or {\sf TD}(0)), and apply our framework to obtain its accelerated variant, denoted as TD(0)-Acc. The value function is approximated by using linear function approximation, i.e., $V^{\pi_{\theta}}(s) = \langle \theta,\Phi(s) \rangle$ where $\Phi(s)$ is the feature at $s\in\Scal$ using three   Fourier basis vectors \cite{konidaris2011}. We consider a randomized policy choosing action uniformly over the set $\{up,down,left,right\}$. In this case, the transition matrix is doubly stochastic, therefore, reversible with uniform distribution.

We use $\gamma =  0.9$ in all GridWorld environments and consider the episodic version of TD(0), where the episode length is 10 times the grid size, i.e. episode length is 100 for a $10\times10$ grid. The learning rate of TD(0) is set to 0.001 while we set the stepsizes $\beta_k$ and $\alpha_k$ for TD(0)-Acc as in  \eqref{thm_sc:stepsizes}, and $\gamma_k = \frac{2\delta}{\mu(k+1)}$ with $\delta = 0.1$. Due to the episodic nature, the stepsizes of both methods are adapted at the episodic level.

Since the optimal solution is unknown, we use the norm of expected {\sf TD} update (NEU) as in \cite{sutton2009} to compare the performance of {\sf TD}$(0)$ and {\sf TD}$(0)$-Acc.  In each run, after every 10 episodes, the NEU is computed by averaging over 10 test episodes. The performance of both TD(0) variants the gridworld environment with size $10\times10$ and $50\times50$ are presented in Figure~\ref{fig:gridworld}, which shows that the proposed method, {\sf TD}$(0)$-Acc, outperforms the classic {\sf TD}$(0)$. 
% The detail of parameter selection of this simulation is presented in the supplementary material.  

% From Figure~\ref{fig:gridworld}, TD(0)-Acc indeed shows its advantage over TD(0).

% \begin{figure}[htp!]
% \begin{center}
%     \includegraphics[width = 0.51\textwidth]{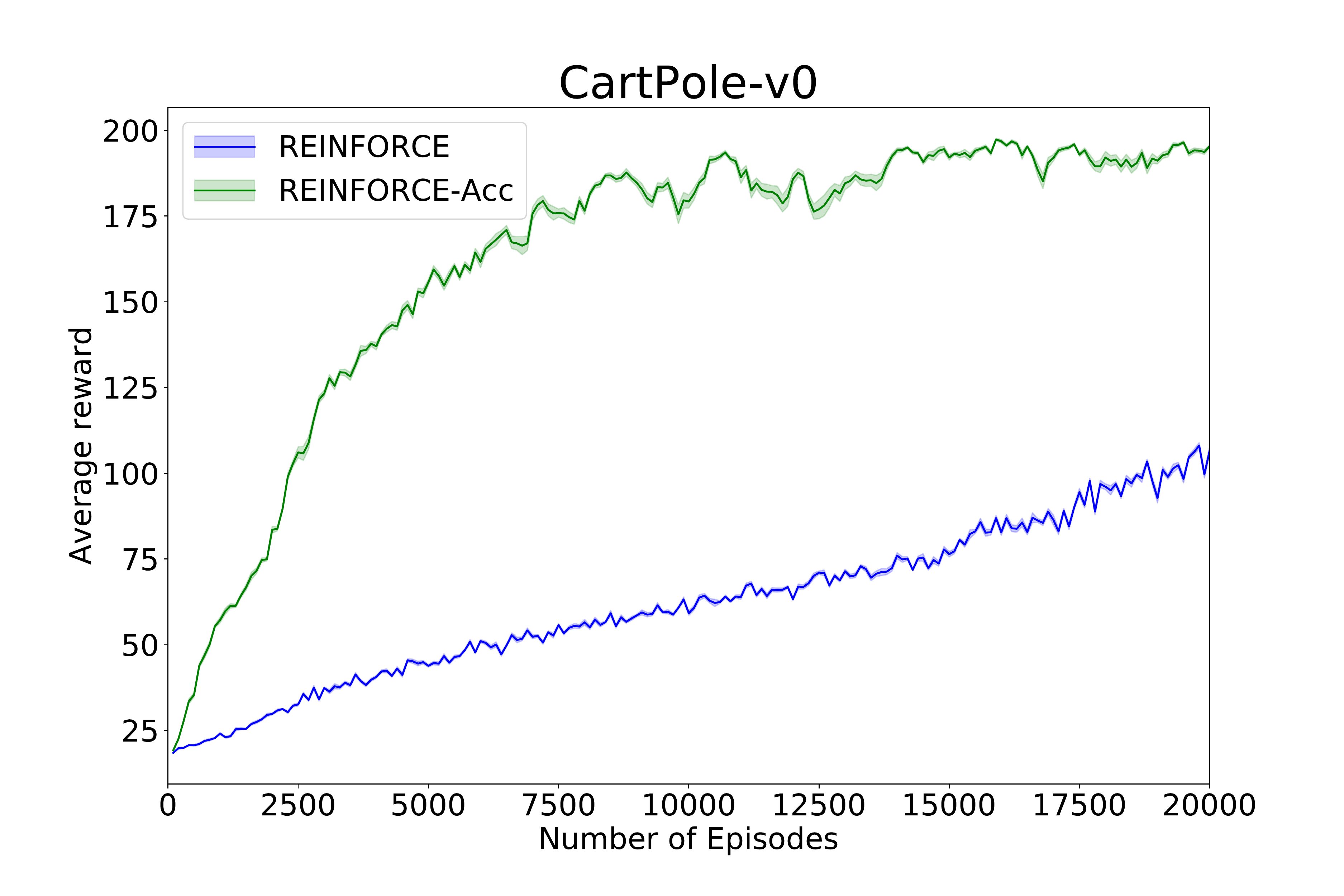}
%     \includegraphics[width = 0.51\textwidth]{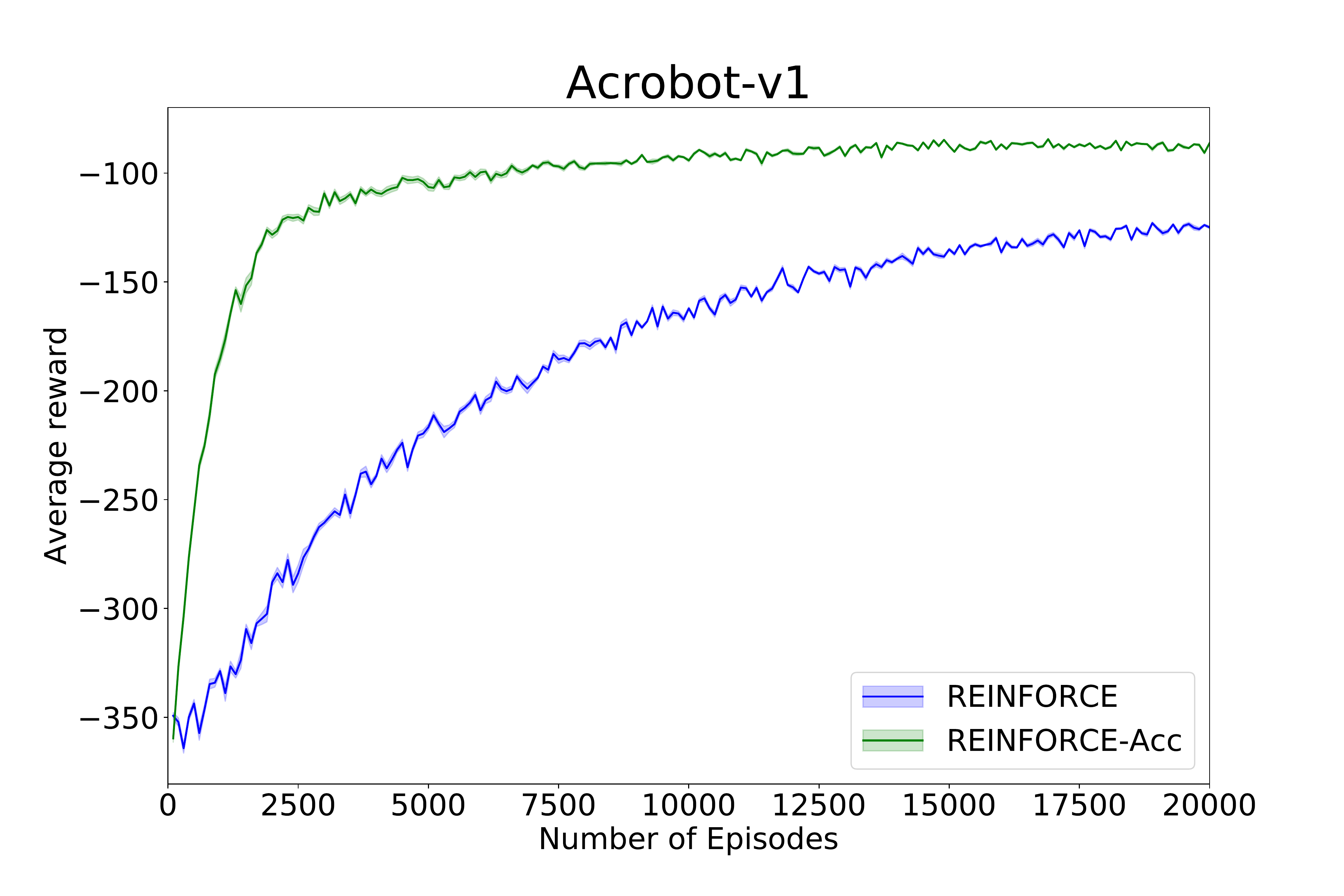}
% \caption{The performance of two algorithms on the \texttt{CartPole-v0} and \texttt{Acrobot-v1} environment.}
% % \label{fig:acrobot}
% \label{fig:acrobot_cartpole}
% \label{fig:acrobot}
% \end{center}
% \end{figure}

\subsection{Accelerated REINFORCE methods}

The REINFORCE method can be viewed as {\sf SGD} in reinforcement learning \cite{Williams1992}.  To evaluate the two variants of REINFORCE, we consider five different control problems, namely, Acrobot, CartPole,  Ant, Swimmer, and HalfCheetah, using the simulated environments from OpenAI Gym and Mujoco \cite{OpenAIGym, todorov2012mujoco} . We utilize the implementation of REINFORCE from \texttt{rllab} library \cite{Duan_rllab}. 
More details of these environments along with the size of observation and action spaces are given in Table \ref{tab:environments}.

\begin{figure}[h]


\begin{center}
    \includegraphics[width = 0.49\textwidth]{figs/CartPole_Acc}
    \includegraphics[width = 0.49\textwidth]{figs/Acrobot_Acc}
\caption{The performance of two algorithms on the \texttt{CartPole-v0} and \texttt{Acrobot-v1} environment.}
% \label{fig:acrobot}
\label{fig:acrobot_cartpole}
\label{fig:acrobot}
\end{center}
\end{figure}

\textbf{Brief summary}: At every iteration, we collect a batch of episodes with different length depending on the environment. We then update the policy parameters and record the performance measure by collecting 50 episodes using the updated policy and average the total rewards for all episodes.

We first compare the algorithms using discrete control tasks: \texttt{Acrobot-v1} and \texttt{CartPole-v0} environments. 
% The \texttt{Acrobot-v1} environment contains two joints and two links where we can actuate the joints between two links. 
% The links are hanging downwards at the beginning and the goal is to swing the end of the lower link up to a given height. 
% In \texttt{CartPole-v0} environment, a pole is attached by an un-actuated joint to a cart moving along a frictionless track. The pole starts upright, and the goal is to prevent it from falling over.
For these discrete tasks, we use a soft-max policy $\pi_\theta$ with parameter $\theta$ defined as
\begin{equation*}
    \pi_\theta(a|s) = \frac{e^{\phi_(s,a,\theta)}}{\sum_{k=1}^{\vert\mathcal{A}\vert} e^{\phi(s,a_k,\theta)}},
\end{equation*}
where $\phi(s,a,\theta)$ is represented by a neural network and $\vert\mathcal{A}\vert$ is the total number of actions. Figure \ref{fig:acrobot_cartpole} presents the performance of two algorithms on these environments, where the accelerated REINFORCE significantly outperforms its non-accelerated variant.

\begin{table*}[htp!]
\caption{Descriptions of environments used for numerical experiments.}
\label{tab:environments}
\resizebox{\textwidth}{!}{%
\begin{tabular}{|l|c|c|c|l|}
\hline
\textbf{Environment} & \textbf{Obs. Space} & \textbf{Action Space} & \textbf{Action Type} & \multicolumn{1}{c|}{\textbf{Descriptions}} \\ \hline
\multirow{3}{*}{Acrobot-v1} & \multirow{3}{*}{2} & \multirow{3}{*}{3} & \multirow{3}{*}{Discrete} & The Acrobot-v1 environment contains two joints and two links where we can actuate \\
 &  &  &  & the joints between two links. The links are hanging downwards at the beginning \\
 &  &  &  & and the goal is to swing the end of the lower link up to a given height. \\ \hline
\multirow{2}{*}{CartPole-v0} & \multirow{2}{*}{4} & \multirow{2}{*}{2} & \multirow{2}{*}{Discrete} & A pole is attached by an un-actuated joint to a cart moving along a frictionless track. \\
 &  &  &  & The pole starts upright, and the goal is to prevent it from falling over. \\ \hline
Swimmer-v2 & 8 & 2 & Continuous & The goal is to make a four-legged creature walk forward as fast as possible. \\ \hline
Walker2d-v2 & 8 & 2 & Continuous & The goal is to make a two-dimensional bipedal robot walk forward as fast as possible. \\ \hline
HalfCheetah-v2 & 17 & 6 & Continuous & Make a two-legged creature move forward as fast as possible. \\ \hline
Ant-v2 & 111 & 8 & Continuous & Make a four-legged creature walk forward as fast as possible. \\ \hline
\end{tabular}%
}
\end{table*}

\begin{table*}[htp!]
% \scriptsize
\caption{Parameters for \texttt{Acrobot-v1}, \texttt{CartPole-v0}, \texttt{Swimmer-v2}, \texttt{Walker2d-v2}, \texttt{HalfCheetah-v2}, and \texttt{Ant-v2} environments.}
\label{tab:params}
\resizebox{\textwidth}{!}{%
\begin{tabular}{|l|l|c|c|c|c|c|c|}
\hline
\textbf{Environment} & \textbf{Algorithm} & \multicolumn{1}{l|}{\textbf{Policy Network}} & \multicolumn{1}{l|}{\textbf{Discount Factor}} & \multicolumn{1}{l|}{\textbf{Episode Length}} & \multicolumn{1}{l|}{\textbf{Baseline}} & \multicolumn{1}{l|}{\textbf{Batch Size}} & \multicolumn{1}{l|}{\textbf{Learning Rate}} \\ \hline
\multirow{2}{*}{CartPole-v0} & REINFORCE & \multirow{2}{*}{4x8x2} & \multirow{2}{*}{0.99} & \multirow{2}{*}{200} & \multirow{2}{*}{None} & 25 & 0.1 \\ \cline{2-2} \cline{7-8} 
 & REINFORCE-Acc &  &  &  &  & 25 & 0.1 \\ \hline
\multirow{2}{*}{Acrobot-v1} & REINFORCE & \multirow{2}{*}{6x16x3} & \multirow{2}{*}{0.99} & \multirow{2}{*}{500} & \multirow{2}{*}{None} & 25 & 0.1 \\ \cline{2-2} \cline{7-8} 
 & REINFORCE-Acc &  &  &  &  & 25 & 0.1 \\ \hline
\multirow{2}{*}{Swimmer-v2} & REINFORCE & \multirow{2}{*}{8x32x32x2} & \multirow{2}{*}{0.99} & \multirow{2}{*}{1000} & \multirow{2}{*}{Linear} & 100 & 0.01 \\ \cline{2-2} \cline{7-8} 
 & REINFORCE-Acc &  &  &  &  & 100 & 0.01 \\ \hline
 \multirow{2}{*}{Walker2d-v2} & REINFORCE & \multirow{2}{*}{17x32x32x6} & \multirow{2}{*}{0.99} & \multirow{2}{*}{1000} & \multirow{2}{*}{Linear} & 100 & 0.05 \\ \cline{2-2} \cline{7-8} 
 & REINFORCE-Acc &  &  &  &  & 100 & 0.05 \\ \hline
\multirow{2}{*}{HalfCheetah-v2} & REINFORCE & \multirow{2}{*}{17x32x32x6} & \multirow{2}{*}{0.99} & \multirow{2}{*}{1000} & \multirow{2}{*}{Linear} & 100 & 0.05 \\ \cline{2-2} \cline{7-8} 
 & REINFORCE-Acc &  &  &  &  & 100 & 0.05 \\ \hline
\multirow{2}{*}{Ant-v2} & REINFORCE & \multirow{2}{*}{111x128x64x32x8} & \multirow{2}{*}{0.99} & \multirow{2}{*}{1000} & \multirow{2}{*}{Linear} & 100 & 0.01 \\ \cline{2-2} \cline{7-8} 
 & REINFORCE-Acc &  &  &  &  & 100 & 0.01 \\ \hline
\end{tabular}%
}
\end{table*}

Next, we evaluate the performance of these algorithms on continuous control tasks in Mujoco. In these environments, we also incorporate a linear baseline in order to reduce the variance of the policy gradient estimator, see e.g. \cite{greensmith2004variance}. The actions are sampled from a deep Gaussian policy given as
$$\pi_\theta(a\vert s) = \mathcal{N}\left(\phi(s,a,\theta_\mu) ; \phi(s,a,\theta_\sigma) \right),$$
where $\phi(.)$ is the output from a neural network. The mean and variance of the Gaussian distribution is learned in this experiment. 
We present all parameters setup for all environments with REINFORCE variants in Table~\ref{tab:params}.
The network parameters are specified as $(\text{input} \times \text{hidden layers} \times \text{output})$, i.e. a $4\times 8\times 2$ network contains 1 hidden layer of 8 neurons.

%The choice of these parameters can be further found in the Supplementary Material. 

% \begin{table*}
% \caption{Parameters for \texttt{Ant-v2}, \texttt{Swimmer-v2}, and \texttt{HalfCheetah-v2} environments.}
% \label{tab:param_cont}
% \resizebox{\textwidth}{!}{%
% \begin{tabular}{|l|c|c|c|c|c|c|}
% \hline
% \multicolumn{1}{|c|}{\multirow{2}{*}{Parameters}} & \multicolumn{2}{c|}{Ant-v2} & \multicolumn{2}{c|}{Swimmer-v2} & \multicolumn{2}{c|}{HalfCheetah-v2} \\ \cline{2-7} 
% \multicolumn{1}{|c|}{} & \multicolumn{1}{l|}{REINFORCE} & \multicolumn{1}{l|}{REINFORCE-Acc} & \multicolumn{1}{r|}{REINFORCE} & \multicolumn{1}{l|}{REINFORCE-Acc} & REINFORCE & REINFORCE-Acc \\ \hline
% \multicolumn{1}{|c|}{Policy Network} & \multicolumn{2}{c|}{111x128x64x32x8} & \multicolumn{2}{c|}{8x32x32x2} & \multicolumn{2}{c|}{17x32x32x6} \\ \hline
% Discount Factor $\gamma$ & \multicolumn{2}{c|}{0.99} & \multicolumn{2}{c|}{0.99} & \multicolumn{2}{c|}{0.99} \\ \hline
% Max Trajectory Length & \multicolumn{2}{c|}{1000} & \multicolumn{2}{c|}{1000} & \multicolumn{2}{c|}{1000} \\ \hline
% Batch Size & 100 & 100 & 100 & 100 & 100 & 100 \\ \hline
% Learning Rate & 0.1 & 0.1 & 0.01 & 0.01 & 0.05 & 0.05 \\ \hline
% \end{tabular}%
% }
% \end{table*}

We evaluate these algorithms on four environments with increasing difficulty: \texttt{Swimmer}, \texttt{Walker2d}, \texttt{HalfCheetah}, and \texttt{Ant}. Figure~\ref{fig:mujoco_envs} illustrates the results in those environments, where the accelerated variant REINFORCE-Acc indeed shows its advantage over REINFORCE in both discrete and continuous tasks which well aligns with our theoretical results.

\begin{remark}
It is also interesting to show the benefit of using Markov samples in  {\sf ASGD}. Since this benefit has been studied in \cite{SunSY2018} for {\sf SGD}, we skip this study in this paper. Our goal in this section is to draw some potential applications of acceleration techniques in stochastic optimization to {\sf RL} algorithms, which is the main motivation of this paper.   
\end{remark}

\begin{figure}[htp!]
\begin{center}
\includegraphics[width = 0.49\textwidth]{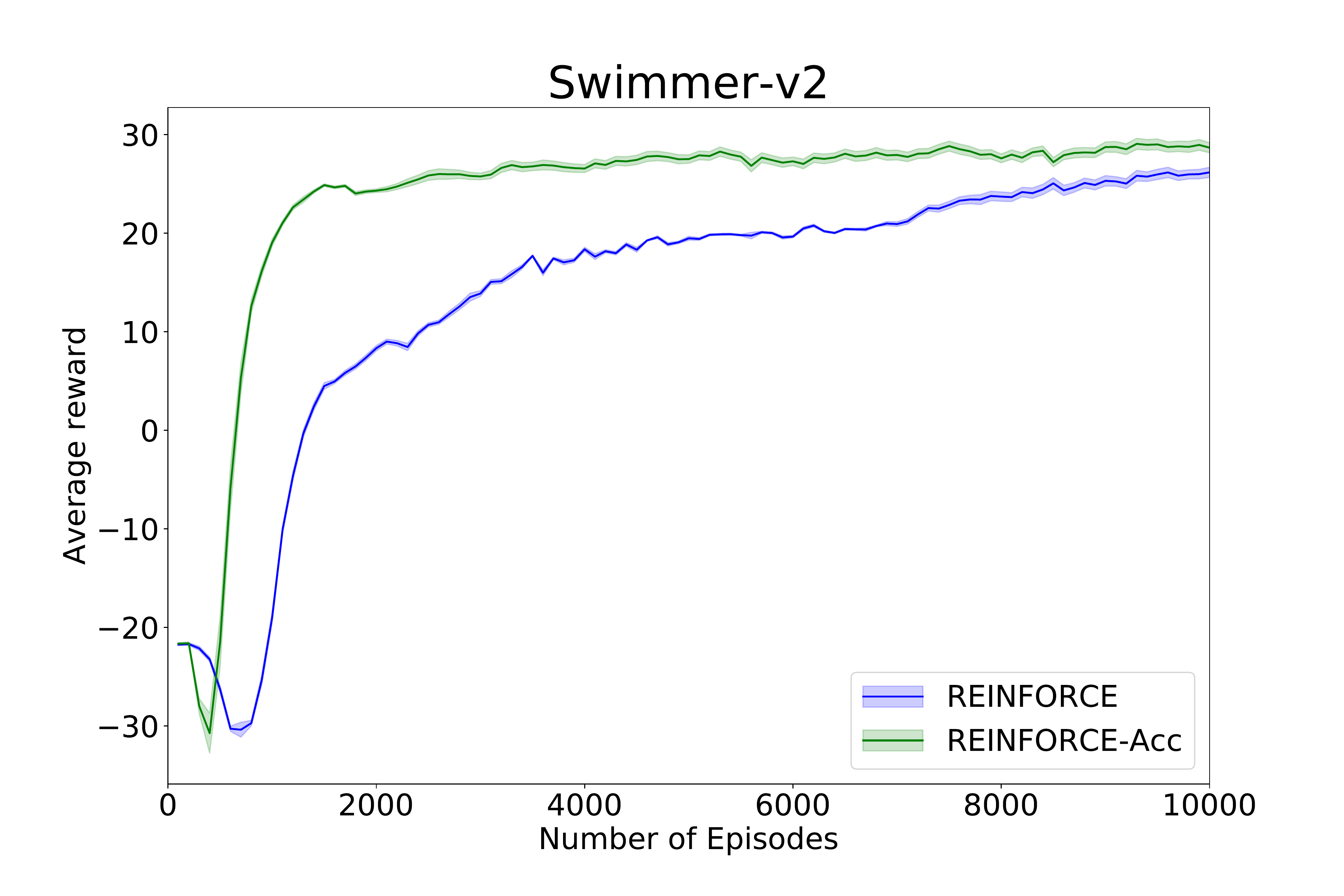}
\includegraphics[width = 0.49\textwidth]{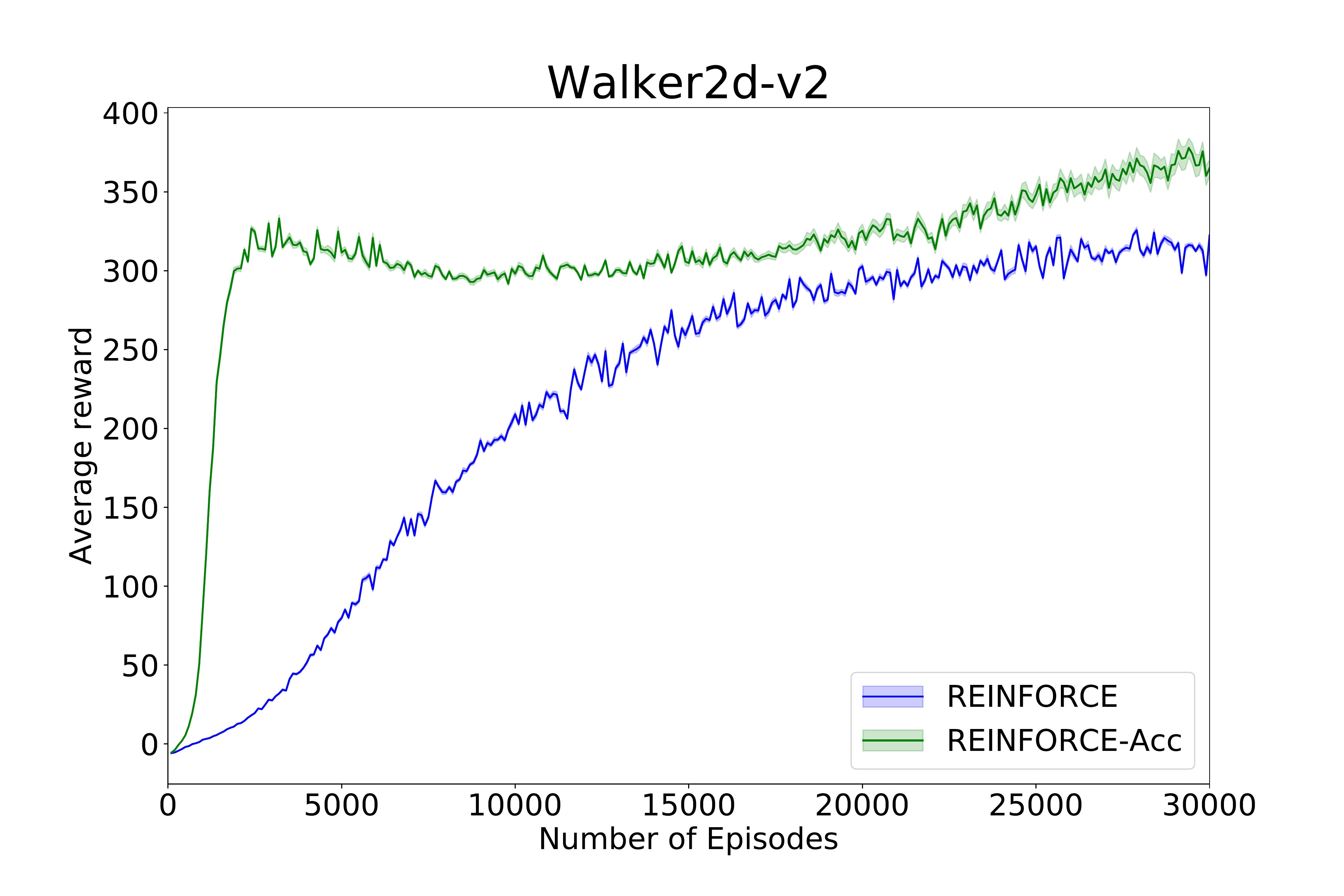}
\includegraphics[width = 0.49\textwidth]{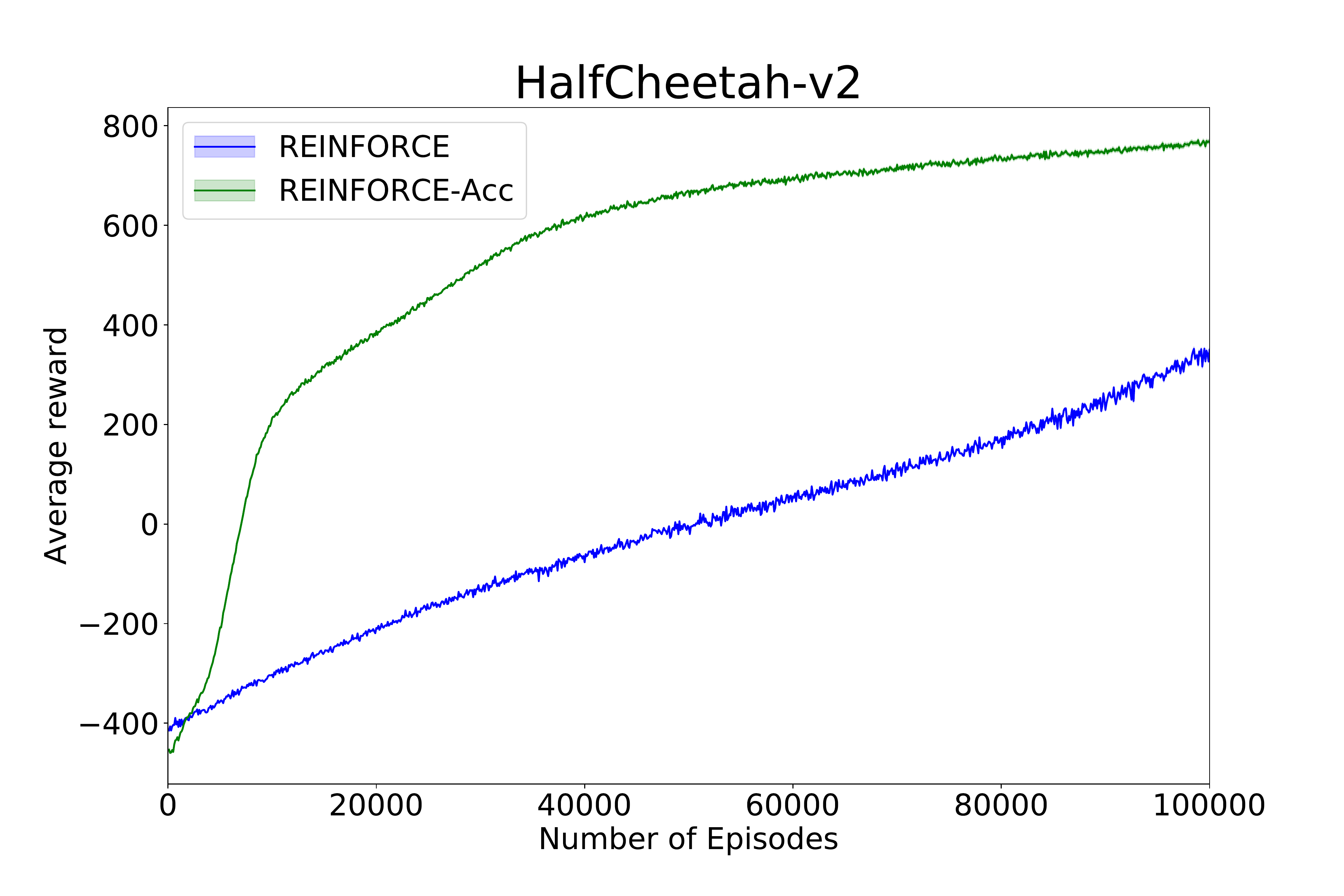}
\includegraphics[width = 0.49\textwidth]{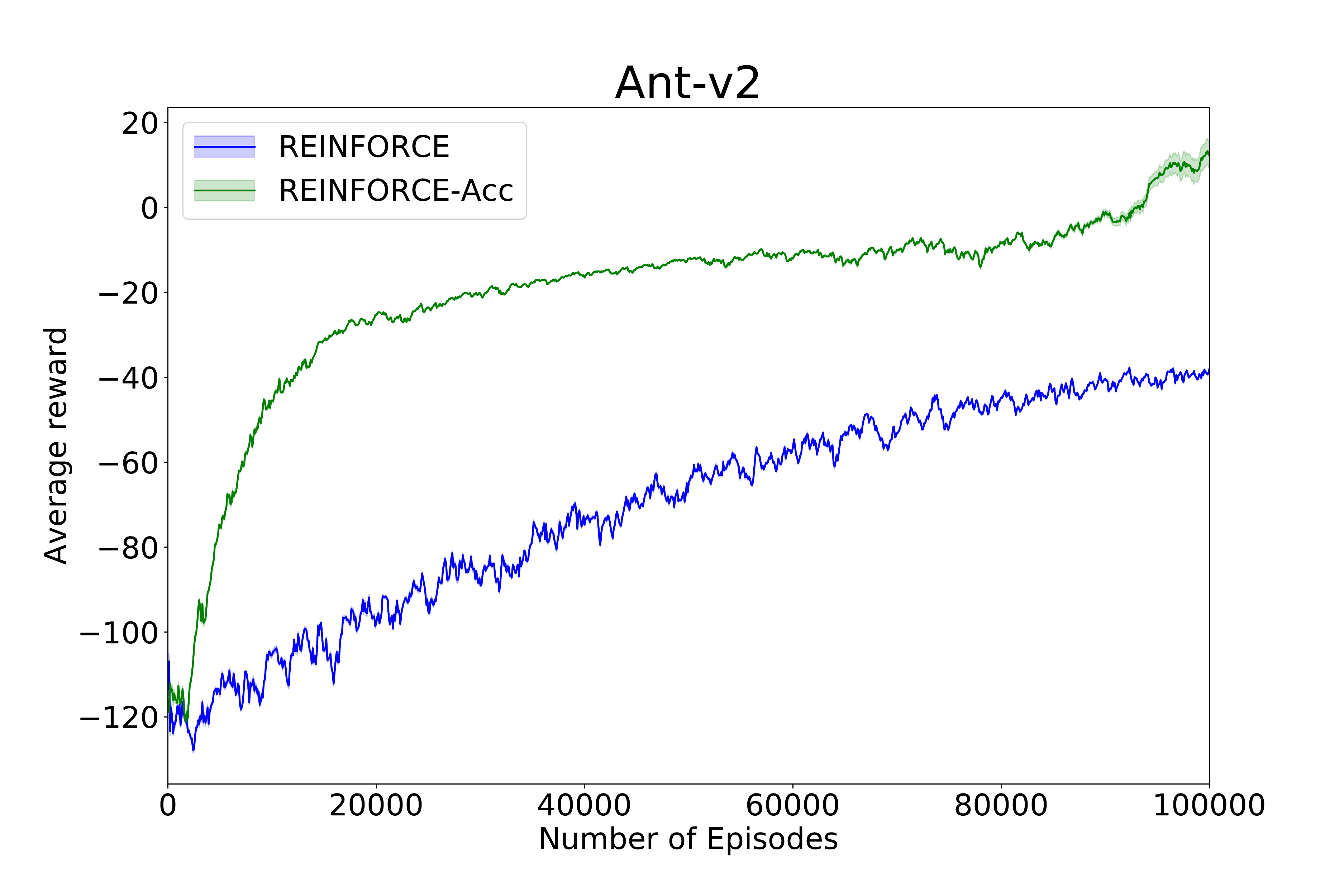}
\caption{The performance of two algorithms on 4 Mujoco environments.}
\label{fig:mujoco_envs}
\end{center}
\end{figure}

\section{Conclusion}
\label{sec_conclusion}

In this paper, we study a variant of {\sf ASGD} for solving (possibly nonconvex) optimization problems, when the gradients are sampled from Makrov process. We characterize the finite-time performance of this method when the gradients are sampled from Markov processes, which shows that {\sf ASGD} converges at the nearly the same rate with Markovian gradient samples as with independent gradient samples.  The only difference is a logarithmic factor that accounts for the mixing time of the Markov chain. We apply the accelerated methods to policy evaluation problems in GridWorld environment and to several challenging problems in the OpenAI Gym and Mujoco. Our simulations show that acceleration can significantly improve the performance of the classic RL algorithms. One future interesting directions is to relax the technical Assumption \ref{assump:ergodicity}. It is also possible to consider applying acceleration methods to other reinforcement learning algorithms.

\bibliography{reference}
\bibliographystyle{plain}

\appendix
\newpage

\section{Appendix}
We first state the following result, as a consequence of the geometric mixing time in Assumptions \ref{assump:ergodicity} and Lipschitz continuity in \ref{ass:Lipschitz}  and \ref{assump:bounded_gradient}. The proof of this result can be found in 
\cite[Lemma 3.2]{ChenZDMC2019}, therefore, it is omitted here for brevity. 
\begin{cor}
Suppose that Assumptions \ref{assump:ergodicity}, \ref{ass:Lipschitz}, \ref{assump:bounded_gradient} hold. Let $g(x) \in\partial f(x)$ and $\tau(\gamma)$ defined in \eqref{notation:tau}. Then
\begin{align}
&\|\Eset[G(x;\xi_{k})] - g(x)\,|\, \xi_{0} = \xi\| \leq \gamma,\quad \forall x,\; \forall k\geq \tau(\gamma).\label{appendix:cor:mixing:ineq}
\end{align} 
\end{cor}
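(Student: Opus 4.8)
The plan is to recognize both terms in \eqref{appendix:cor:mixing:ineq} as expectations of one and the same fixed, bounded map $\xi'\mapsto G(x;\xi')$ — one taken against the $k$-step transition law $\Pset^{k}(\xi,\cdot)$ and the other against the stationary distribution $\pi$ — and then to control their gap by the total variation distance between these two laws, which is exactly what the geometric mixing bound \eqref{notation:tau} estimates. Because $x$ is held fixed, $G(x;\xi_{k})$ is a deterministic bounded function of the single random state $\xi_{k}$, so none of the cross-time dependence that complicates the main lemma appears here; the argument is a one-shot bias estimate.

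First I would establish the identity $g(x)=\Eset_{\pi}[G(x;\xi)]$. Under Assumption \ref{ass:Lipschitz} each $F(\cdot;\xi)$ is continuously differentiable with $G(x;\xi)=\nabla F(x;\xi)$, and by Assumption \ref{assump:bounded_gradient} these gradients are uniformly bounded by $M$; this integrable domination justifies differentiating under the integral in \eqref{notation:f}, so that $g(x)=\nabla f(x)=\int_{\Xi}\nabla F(x;\xi)\,d\pi(\xi)=\Eset_{\pi}[G(x;\xi)]$. Since $\Xi$ is finite by Assumption \ref{assump:ergodicity}, I can then write both quantities as finite sums,
\begin{align*}
\Eset[G(x;\xi_{k})\,|\,\xi_{0}=\xi]=\sum_{\xi'\in\Xi}\Pset^{k}(\xi,\xi')\,G(x;\xi'),\qquad g(x)=\sum_{\xi'\in\Xi}\pi(\xi')\,G(x;\xi'),
\end{align*}
so their difference equals $\sum_{\xi'\in\Xi}\bigl(\Pset^{k}(\xi,\xi')-\pi(\xi')\bigr)G(x;\xi')$.

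The core estimate is then immediate: applying the triangle inequality, pulling the norm inside the sum, and using $\|G(x;\xi')\|\le M$ gives
\begin{align*}
\bigl\|\Eset[G(x;\xi_{k})\,|\,\xi_{0}=\xi]-g(x)\bigr\|\le M\sum_{\xi'\in\Xi}\bigl|\Pset^{k}(\xi,\xi')-\pi(\xi')\bigr|=2M\,\bigl\|\Pset^{k}(\xi,\cdot)-\pi\bigr\|_{TV},
\end{align*}
where I use the convention $\|\mu-\nu\|_{TV}=\tfrac12\sum_{\xi'}|\mu(\xi')-\nu(\xi')|$ from \cite{LevinPeresWilmer2006}. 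Invoking \eqref{notation:tau}, for every $k\ge\tau(\gamma)$ the total variation term is at most $\gamma$, so the left-hand side is bounded by $2M\gamma$.

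The only delicate point is reconciling the factor $2M$ with the clean bound $\gamma$ stated in \eqref{appendix:cor:mixing:ineq}, and I expect this to be pure bookkeeping rather than analytical depth. Since $\tau(\gamma)=C\log(1/\gamma)$, demanding $\|\Pset^{k}(\xi,\cdot)-\pi\|_{TV}\le\gamma/(2M)$ only raises the threshold to $C\log(1/\gamma)+C\log(2M)$, which is again of the form $C'\log(1/\gamma)$ plus a constant and can be folded into a redefined mixing-time constant. With this normalization the bound becomes exactly $\gamma$ for all $k\ge\tau(\gamma)$. The main obstacles are therefore the careful justification of the differentiation-under-the-integral step and the consistent tracking of the constant $2M$, rather than any substantive difficulty — which is presumably why the authors defer to \cite[Lemma 3.2]{ChenZDMC2019} and omit the computation.
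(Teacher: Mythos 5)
Your proof is correct, and it is essentially the canonical argument: the paper itself gives no proof of this corollary (it defers entirely to \cite[Lemma 3.2]{ChenZDMC2019}), and what you have written is precisely the standard computation that citation stands for. Two small remarks. First, the dominated-convergence justification for exchanging gradient and expectation is superfluous here: Assumption \ref{assump:ergodicity} makes $\Xi$ finite, so $f(x)=\sum_{\xi'\in\Xi}\pi(\xi')F(x;\xi')$ is a finite sum and $g(x)=\nabla f(x)=\sum_{\xi'}\pi(\xi')G(x;\xi')$ follows by linearity (granting, as the paper implicitly does, that the Lipschitz selection $G(\cdot;\xi)$ is in fact the gradient of $F(\cdot;\xi)$). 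Second, the factor $2M$ you isolate is a real discrepancy in the literal statement: with $\tau(\gamma)$ defined via $\|\Pset^{k}(\xi,\cdot)-\pi\|_{TV}\leq\gamma$ as in \eqref{notation:tau}, the argument yields a bias bound of $2M\gamma$, not $\gamma$, for $k\geq\tau(\gamma)$. Your fix — demand total variation at most $\gamma/(2M)$, which raises the threshold only by the additive constant $C\log(2M)$ and hence keeps $\tau(\gamma)=\Ocal(\log(1/\gamma))$ after redefining $C$ (for, say, $\gamma\leq 1/2$) — is exactly the constant-absorption convention this line of work relies on, and it is harmless downstream because Theorems \ref{thm:nonconvex}--\ref{thm:sc} use only the logarithmic growth of $\tau$. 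So there is no gap; the proposal supplies a complete and correctly normalized proof of \eqref{appendix:cor:mixing:ineq}.
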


\subsection{Proof of Lemma \ref{lem:nonconvex}}\label{appendix:lem_nonconvex}
\begin{proof}
Since $f$ satisfies Assumption \ref{ass:Lipschitz}, by  \eqref{alg_nonconvex:xbar} and \eqref{alg_nonconvex:x} we have
% \allowdisplaybreaks
\begin{align*}
f(x_{k}) &\leq f(x_{k-1}) + \langle\nabla f(x_{k-1}),\,x_{k} - x_{k-1}\rangle + \frac{L}{2}\|x_{k}-x_{k-1}\|^2\notag\\
&= f(x_{k-1}) - \gamma_{k}\langle\nabla f(x_{k-1}),\,G(y_{k};\xi_{k})\rangle + \frac{L\gamma_{k}^2}{2}\|G(y_{k};\xi_{k})\|^2\notag\\
&= f(x_{k-1}) -\gamma_{k} \langle\nabla f(x_{k-1}),\,\nabla f(y_{k}) \rangle - \gamma_{k}\langle\nabla f(x_{k-1}),\,G(y_{k};\xi_{k}) - \nabla f(y_{k})\rangle\notag\\ 
&\qquad + \frac{L\gamma_{k}^2}{2}\|G(y_{k};\xi_{k})\|^2\notag\\
&=  f(x_{k-1}) -\gamma_{k} \|\nabla f(y_{k})\|^2 - \gamma_{k}\langle\nabla f(x_{k-1}) - \nabla f(y_{k}),\,\nabla f(y_{k}) \rangle \notag\\
&\qquad - \gamma_{k}\langle\nabla f(x_{k-1}),\,G(y_{k};\xi_{k}) - \nabla f(y_{k})\rangle + \frac{L\gamma_{k}^2}{2}\|G(y_{k};\xi_{k})-\nabla f(y_{k}) + \nabla f(y_{k} )\|^2\notag\\
&= f(x_{k-1}) -\gamma_{k}\left(1 - \frac{L\gamma_{k}}{2}\right) \|\nabla f(y_{k})\|^2 - \gamma_{k}\langle\nabla f(x_{k-1}) - \nabla f(y_{k}),\,\nabla f(y_{k}) \rangle \notag\\
&\qquad - \gamma_{k}\langle\nabla f(x_{k-1}) - L\gamma_{k}\nabla f(y_{k}),\,G(y_{k};\xi_{k}) - \nabla f(y_{k})\rangle + \frac{L\gamma_{k}^2}{2}\|G(y_{k};\xi_{k})-\nabla f(y_{k})\|^2\notag\\
&\leq f(x_{k-1}) -\gamma_{k}\left(1 - \frac{L\gamma_{k}}{2}\right) \|\nabla f(y_{k})\|^2 + L\gamma_{k}\|x_{k-1} - y_{k}\|\|\nabla f(y_{k})\|\notag\\
&\qquad - \gamma_{k}\langle\nabla f(x_{k-1}) - L\gamma_{k}\nabla f(y_{k}),\,G(y_{k};\xi_{k}) - \nabla f(y_{k})\rangle + 2M^2 L\gamma_{k}^2,
\end{align*}
where the last inequality is due to  \eqref{ass_Lipschitz:Ineq1} and \eqref{assump_bounded_gradient:Ineq}. Using \eqref{alg_nonconvex:xbar} we have from the preceding relation
\begin{align}
f(x_{k}) &\leq f(x_{k-1}) -\gamma_{k}\left(1 - \frac{L\gamma_{k}}{2}\right) \|\nabla f(y_{k})\|^2 + L(1-\alpha_{k})\gamma_{k}\|x_{k-1} - \xbar_{k-1}\|\|\nabla f(y_{k})\|\notag\\
&\qquad - \gamma_{k}\langle\nabla f(x_{k-1}) - L\gamma_{k}\nabla f(y_{k}),\,G(y_{k};\xi_{k}) - \nabla f(y_{k})\rangle + 2M^2 L\gamma_{k}^2\notag\\
& \leq f(x_{k-1}) -\gamma_{k}\left(1 - L\gamma_{k}\right) \|\nabla f(y_{k})\|^2 + \frac{L(1-\alpha_{k})^2}{2}\|x_{k-1} - \xbar_{k-1}\|^2\notag\\
&\qquad - \gamma_{k}\langle\nabla f(x_{k-1}) - L\gamma_{k}\nabla f(y_{k}),\,G(y_{k};\xi_{k}) - \nabla f(y_{k})\rangle  + 2M^2 L\gamma_{k}^2, \label{lem_nonconvex:Eq1}
\end{align}
where the last inequality we apply the relation $2ab\leq a^2 + b^2$ to the third term. Next, using \eqref{alg_nonconvex:y}--\eqref{alg_nonconvex:xbar} we have
\begin{align*}
\xbar_{k} - x_{k} = (1-\alpha_{k})(\xbar_{k-1}-x_{k-1}) + (\gamma_{k}-\beta_{k}) G(y_{k};\xi_{k}),    
\end{align*}
which dividing both sides by $\Gamma_{k}$, using \eqref{notation:Gamma} and $\alpha_{1} = 1$, and summing up both sides yields
\begin{align}
\xbar_{k} - x_{k} = \Gamma_{k}\sum_{t=1}^{k}\frac{\gamma_{t}-\beta_{t}}{\Gamma_{t}}G(y_{t};\xi_{t}). \label{lem_nonconvex:Eq1aa}
\end{align}
In addition, we also have
\begin{align*}
\sum_{k=1}^{K}\frac{\alpha_{k}}{\Gamma_{k}} = \frac{\alpha_1}{\Gamma_1} + \sum_{k=2}^K \frac{1}{\Gamma_k} \left( 1 - \frac{\Gamma_k}{\Gamma_{k-1}}  \right) = \frac{1}{\Gamma_1} + \sum_{k=2}^K \left( \frac{1}{\Gamma_k} - \frac{1}{\Gamma_{k-1}}  \right) = \frac{1}{\Gamma_{K}}\cdot
\end{align*}
Thus, by using the Jensen's inequality for $\|\cdot\|^2$ we have from the preceding two equations
\begin{align}
\|\xbar_{k} - x_{k}\|^2 &=  \left\|\Gamma_{k}\sum_{t=1}^{k}\frac{\gamma_{t}-\beta_{t}}{\Gamma_{t}}G(y_{t};\xi_{t})\right\|^2 = \left\|\Gamma_{k}\sum_{t=1}^{k}\frac{\alpha_{t}}{\Gamma_{t}}\frac{\gamma_{t}-\beta_{t}}{\alpha_{t}}G(y_{t};\xi_{t})\right\|^2\notag\\
&\leq \Gamma_{k}\sum_{t=1}^{k}\frac{\alpha_{t}}{\Gamma_{t}}\left\|\frac{\gamma_{t}-\beta_{t}}{\alpha_{t}}G(y_{t};\xi_{t})\right\|^2\leq M^2\Gamma_{k}\sum_{t=1}^{k}\frac{(\gamma_{t}-\beta_{t})^2}{\Gamma_{t}\alpha_{t}},   \label{lem_nonconvex:Eq1a}
\end{align}
where the last inequality is due to \eqref{assump_bounded_gradient:Ineq}. Substituing the preceding relation into \eqref{lem_nonconvex:Eq1} and since $(1-\alpha_{k})^2\Gamma_{k-1}\leq \Gamma_{k}$ we have
\begin{align}
f(x_{k}) & \leq f(x_{k-1}) -\gamma_{k}\left(1 - L\gamma_{k}\right) \|\nabla f(y_{k})\|^2 + \frac{M^2 L\Gamma_{k}}{2}\sum_{t=1}^{k}\frac{(\gamma_{t}-\beta_{t})^2}{\Gamma_{t}\alpha_{t}}\notag\\
&\qquad - \gamma_{k}\langle\nabla f(x_{k-1}) - L\gamma_{k}\nabla f(y_{k}),\,G(y_{k};\xi_{k}) - \nabla f(y_{k})\rangle  + 2M^2 L\gamma_{k}^2\notag\\
&\leq f(x_{k-1}) -\gamma_{k}\left(1 - L\gamma_{k}\right) \|\nabla f(y_{k})\|^2 + \frac{M^2 L\Gamma_{k}}{2}\sum_{t=1}^{k}\frac{(\gamma_{t}-\beta_{t})^2}{\Gamma_{t}\alpha_{t}}\notag\\
&\qquad - \gamma_{k}\langle\nabla f(x_{k-1}),\,G(y_{k};\xi_{k}) - \nabla f(y_{k})\rangle + 4M^2 L\gamma_{k}^2, \label{lem_nonconvex:Eq2}
\end{align}
where the last inequality is due to \eqref{assump_bounded_gradient:Ineq}. We next analyze the inner product on the right-hand side of \eqref{lem_nonconvex:Eq2}. Indeed, by Assumptions \ref{ass:Lipschitz} and \ref{assump:bounded_gradient} we have
\begin{align}
&- \gamma_{k}\langle \nabla f(x_{k-1}) ,\,G(y_{k};\xi_{k}) - \nabla f(y_{k})\rangle\notag\\ 
&= - \gamma_{k}\langle  \nabla f(x_{k-\tau(\gamma_{k})}),\,G(y_{k};\xi_{k}) - \nabla f(y_{k})\rangle\notag\\ 
&\qquad - \gamma_{k}\langle \nabla f(x_{k-1}) - \nabla f(x_{k-\tau(\gamma_{k})}),\,G(y_{k};\xi_{k}) - \nabla f(y_{k})\rangle\notag\\
&= - \gamma_{k}\langle  \nabla f(x_{k-\tau(\gamma_{k})}),\,G(y_{k-\tau(\gamma_{k})};\xi_{k}) - \nabla f(y_{k-\tau(\gamma_{k})})\rangle\notag\\ 
&\qquad - \gamma_{k}\langle  \nabla f(x_{k-\tau(\gamma_{k})}),\,G(y_{k});\xi_{k}) - G(y_{k-\tau(\gamma_{k})};\xi_{k})\rangle\notag\\
&\qquad - \gamma_{k}\langle  \nabla f(x_{k-\tau(\gamma_{k})}),\,\nabla f(y_{k-\tau(\gamma_{k})})-\nabla f(y_{k})\rangle\notag\\
&\qquad - \gamma_{k}\langle \nabla f(x_{k-1}) - \nabla f(x_{k-\tau(\gamma_{k})}),\,G(y_{k};\xi_{k}) - \nabla f(y_{k})\rangle\notag\\
&\leq - \gamma_{k}\langle  \nabla f(x_{k-\tau(\gamma_{k})}),\,G(y_{k-\tau(\gamma_{k})};\xi_{k}) - \nabla f(y_{k-\tau(\gamma_{k})})\rangle + 2ML\gamma_{k}\|y_{k}-y_{k-\tau(\gamma_{k})}\|\notag\\
&\qquad + 2LM\gamma_{k}\|x_{k-1}-x_{k-\tau(\gamma_{k})}\|.
 \label{lem_nonconvex:Eq2a}
\end{align}
First, we denote by $\Fcal_{k}$ the filtration containing all the history generated by the algorithm up to time $k$. Using Eq.\ \eqref{appendix:cor:mixing:ineq} and Assumption \ref{assump:ergodicity} we consider
\begin{align*}
&\Eset[- \gamma_{k}\langle  \nabla f(x_{k-\tau(\gamma_{k})}),\,G(y_{k-\tau(\gamma_{k})};\xi_{k}) - \nabla f(y_{k-\tau(\gamma_{k})})\rangle\,|\,\Fcal_{k-\tau(\gamma_{k})}]\notag\\ 
&= -\gamma_{k}   \langle  \nabla f(x_{k-\tau(\gamma_{k})}),\,\Eset[G(y_{k-\tau(\gamma_{k})};\xi_{k}) - \nabla f(y_{k-\tau(\gamma_{k})})\,|\,\Fcal_{k-\tau(\gamma_{k})}]\rangle \notag\\
& \leq M\gamma_{k}\left|\Eset[G(y_{k-\tau(\gamma_{k})};\xi_{k}) - \nabla f(y_{k-\tau(\gamma_{k})})\,|\,\Fcal_{k-\tau(\gamma_{k})}]\right| \leq M\gamma_{k}^2. 
\end{align*}
Second, by Eq.\ \eqref{alg_nonconvex:xbar} we have
\begin{align*}
y_{k+1} - y_{k} &= y_{k+1} - \xbar_{k+1} + \xbar_{k+1} - \xbar_{k} +\xbar_{k} - y_{k}\notag\\ 
&= \beta_{k+1}G(y_{k+1};\xi_{k+1}) - \beta_{k}G(y_{k};\xi_{k})+ \xbar_{k+1} - \xbar_{k},
\end{align*}
which by Assumption \ref{assump:bounded_gradient} and since $\beta_{k+1}\leq \beta_{k}$  implies that
\begin{align}
\|y_{k+1}-y_{k}\| \leq 2M\beta_{k} +    \|\xbar_{k+1} - \xbar_{k}\|. \label{lem_nonconvex:Eq2b} 
\end{align}
Using \eqref{alg_nonconvex:y} and \eqref{alg_nonconvex:xbar} we have
\begin{align*}
\xbar_{k+1} -\xbar_{k} = \alpha_{k+1}(x_{k}-\xbar_{k}) -\beta_{k+1}G(y_{k+1};\xi_{k+1}),
\end{align*}
Using \eqref{lem_nonconvex:Eq1aa} and the triangle inequality we have
\begin{align*}
\|\xbar_{k} - x_{k}\| &=  \left\|\Gamma_{k}\sum_{t=1}^{k}\frac{\gamma_{t}-\beta_{t}}{\Gamma_{t}}G(y_{t};\xi_{t})\right\| \leq M\Gamma_{k}\sum_{t=1}^{k}\frac{\gamma_{t}-\beta_{t}}{\Gamma_{t}}.
\end{align*}
Thus, by using $\alpha_{k+1} \leq \alpha_{k}$ and Assumption \ref{assump:bounded_gradient} we obtain from the two preceding equations
\begin{align*}
\|\xbar_{k+1} -\xbar_{k}\| \leq     M\alpha_{k}\Gamma_{k}\sum_{t=1}^{k}\frac{\gamma_{t}-\beta_{t}}{\Gamma_{t}} + M\beta_{k}.
\end{align*}
Substituting the preceding relation into \eqref{lem_nonconvex:Eq2b} yields
\begin{align*}
\|y_{k+1}-y_{k}\| \leq  3M\beta_{k} + M\alpha_{k}\Gamma_{k}\sum_{t=1}^{k}\frac{\gamma_{t}-\beta_{t}}{\Gamma_{t}},  
\end{align*}
which since $\beta_{k}$ is nonincreasing and $\beta_{k}\leq \gamma_{k}$, gives 
\begin{align*}
&2LM\gamma_{k}\|y_{k}-y_{k-\tau(\gamma_{k})}\|\leq 2LM\gamma_{k}\sum_{t=k-\tau(\gamma_{k})}^{k-1}\|y_{t+1}-y_{t}\|\notag\\ 
&\leq 6LM^2\gamma_{k}\sum_{t=k-\tau(\gamma_{k})}^{k-1}\beta_{t} + 2LM^3\gamma_{k}\sum_{t=k-\tau(\gamma_{k})}^{k-1}\alpha_{t}\Gamma_{t}\sum_{u=1}^{t}\frac{\gamma_{u}-\beta_{u}}{\Gamma_{u}}\notag\\
&\leq 6LM^2\tau(\gamma_{k})\gamma_{k}\beta_{k-\tau(\gamma_{k})} + 2LM^3 \gamma_{k}\sum_{t=k-\tau(\gamma_{k})}^{k-1}\alpha_{t}\Gamma_{t}\sum_{t=1}^{k}\frac{\gamma_{t}-\beta_{t}}{\Gamma_{t}}\notag\\
&\leq 6LM^2\tau(\gamma_{k})\gamma_{k-\tau(\gamma_{k})}\gamma_{k} + 2LM^3 \gamma_{k}\sum_{t=k-\tau(\gamma_{k})}^{k-1}\alpha_{t}\Gamma_{t}\sum_{t=1}^{k}\frac{\gamma_{t}-\beta_{t}}{\Gamma_{t}}\cdot
\end{align*}
Third, by \eqref{alg_nonconvex:x} we have 
\begin{align*}
&2LM\gamma_{k}\|x_{k-1}-x_{k-\tau(\gamma_{k})}\|\leq 2LM\gamma_{k}\sum_{t=k-\tau(\gamma_{k})+1}^{k-1}\|x_{t}-x_{t-1}\|\notag\\
& = 2LM\gamma_{k}  \sum_{t=k-\tau(\gamma_{k})+1}^{k-1}\|\gamma_{t}G(y_{t};\xi_{t})\|\leq 2LM^2\tau(\gamma_{k})\gamma_{k-\tau(\gamma_{k})}\gamma_{k}.   
\end{align*}
Taking the expectation on both sides of \eqref{lem_nonconvex:Eq2a} and using the preceding three relations we obtain
\begin{align*}
& \Eset\left[- \gamma_{k}\langle \nabla f(x_{k-1}) ,\,G(y_{k};\xi_{k}) - \nabla f(y_{k})\rangle\right]\notag\\
&\quad \leq M\gamma_{k}^2 + 8LM^2\tau(\gamma_{k})\gamma_{k-\tau(\gamma_{k})}\gamma_{k} +  2LM^3 \gamma_{k}\sum_{t=k-\tau(\gamma_{k})}^{k-1}\alpha_{t}\Gamma_{t}\sum_{t=1}^{k}\frac{\gamma_{t}-\beta_{t}}{\Gamma_{t}}.
\end{align*}
Taking the expectation on both sides of \eqref{lem_nonconvex:Eq2} and using the equation above give \eqref{lem_nonconvex:Ineq}, i.e., 
\begin{align*}
\Eset\left[f(x_{k})\right]
&\leq \Eset\left[f(x_{k-1})\right] -\gamma_{k}\left(1 - L\gamma_{k}\right) \Eset\left[\|\nabla f(y_{k})\|^2\right] + \frac{M^2 L\Gamma_{k}}{2}\sum_{t=1}^{k}\frac{(\gamma_{t}-\beta_{t})^2}{\Gamma_{t}\alpha_{t}}\notag\\
&\qquad   + (4ML + 1)M \gamma_{k}^2     + 8LM^2\tau(\gamma_{k})\gamma_{k-\tau(\gamma_{k})}\gamma_{k} +  2LM^3 \gamma_{k}\sum_{t=k-\tau(\gamma_{k})}^{k-1}\alpha_{t}\Gamma_{t}\sum_{t=1}^{k}\frac{\gamma_{t}-\beta_{t}}{\Gamma_{t}}\cdot
\end{align*}
\end{proof}

% \newpage

\subsection{Proofs of Section \ref{sec:convex}}
The analysis of Theorems \ref{thm:convex} and \ref{thm:sc} are established based on the following two key lemmas. The proof of the first lemma is adopted from the results studied in \cite{Lan2012}. We restate here with some minor modification for the purpose of our analysis.

\begin{lem}\label{lem:f_xbar}
Let $\alpha_{k}$ and $\gamma_{k}$ satisfy \eqref{condition:stepsizes} and
\begin{align}
\frac{\alpha_{k}}{\gamma_{k}\Gamma_{k}}\leq\frac{\alpha_{k-1}(1+\mu\gamma_{k-1})}{\gamma_{k-1}\Gamma_{k-1}},    \label{lem_f_xbar:stepsizes}
\end{align}
where $\Gamma_{k}$ is defined in \eqref{notation:Gamma}. Then $\{\xbar_{k}\}$ generated by Algorithm \ref{alg:ASGD_convex} satisfies for all $k\geq 1$
\begin{align}
 f(\xbar_{k}) - f(x^*) &\leq \Gamma_{k}\frac{\gamma_{0}f(\xbar_{0}) + \alpha_{0}(1+\mu\gamma_{0})D}{\gamma_{0}\Gamma_{0}}  + \Gamma_{k}\sum_{t=1}^{k} \frac{4M^2\gamma_{t}\alpha_{t}}{\Gamma_{t}(1+\mu\gamma_{t}-L\gamma_{t}\alpha_{t})}\notag\\
&\quad+ \Gamma_{k}\sum_{t=1}^{k}\frac{\alpha_{t}}{\Gamma_{t}}\big\langle G(y_{t};\xi_{t})-\nabla f(y_{t})\,,\, z - \tilde{x}_{t-1}\big\rangle,\label{lem_f_xbar:Ineq}
\end{align}
where $\tilde{x}_{k-1}$ is defined as
\begin{align}
\tilde{x}_{k-1} = \frac{1}{1+\mu\gamma_{k}}x_{k-1} + \frac{\mu\gamma_{k}}{1+\mu\gamma_{k}}y_{k}.\label{apx_notation:xtilde}    
\end{align}
\end{lem}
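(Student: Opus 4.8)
The plan is to establish a single pathwise ``descent-plus-error'' recursion for $f(\xbar_k)-f(z)$ and then telescope it after dividing by $\Gamma_k$. Nothing here involves expectations, so the Markovian nature of the samples never enters; the bias and dependence are quarantined inside the inner-product term $\langle G(y_t;\xi_t)-\nabla f(y_t),\,z-\tilde x_{t-1}\rangle$, to be handled later. Write $\delta_k \eqdef G(y_k;\xi_k)-\nabla f(y_k)$. The linchpin is the algebraic identity $\xbar_k-y_k=\alpha_k(x_k-\tilde x_{k-1})$, which I would verify first: substituting $y_k=(1-\beta_k)\xbar_{k-1}+\beta_k x_{k-1}$ and $\xbar_k=(1-\alpha_k)\xbar_{k-1}+\alpha_k x_k$ and using \eqref{condition:stepsizes} (in the equivalent form $\beta_k-\alpha_k=-\mu\gamma_k\beta_k(1-\alpha_k)$) collapses the coefficients of $x_{k-1}$ and $y_k$ exactly into the convex combination \eqref{apx_notation:xtilde}. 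It is equally useful to record that the prox step \eqref{alg:x} is the minimization of $\gamma_k\langle G(y_k;\xi_k),x\rangle+\tfrac{1+\mu\gamma_k}{2}\|x-\tilde x_{k-1}\|^2$ over $\Xcal$, i.e.\ $\tilde x_{k-1}$ is its unconstrained center; this is what forces the error to pair with $z-\tilde x_{k-1}$.

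Next I would produce the one-step inequality. Starting from $L$-smoothness, $f(\xbar_k)\le f(y_k)+\langle\nabla f(y_k),\xbar_k-y_k\rangle+\tfrac L2\|\xbar_k-y_k\|^2$, I bound $f(y_k)$ from above by applying $\mu$-(strong) convexity of $f$ at $y_k$ against $\xbar_{k-1}$ and $z$ with weights $1-\alpha_k$ and $\alpha_k$. Since $\xbar_k-(1-\alpha_k)\xbar_{k-1}=\alpha_k x_k$, the linear terms combine into $\alpha_k\langle\nabla f(y_k),x_k-z\rangle$, giving $f(\xbar_k)-f(z)\le(1-\alpha_k)(f(\xbar_{k-1})-f(z))+\alpha_k\langle\nabla f(y_k),x_k-z\rangle-\mu[(1-\alpha_k)V(y_k,\xbar_{k-1})+\alpha_k V(y_k,z)]+\tfrac L2\|\xbar_k-y_k\|^2$. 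I then split $\nabla f(y_k)=G(y_k;\xi_k)-\delta_k$ and, in the error part, use $-\langle\delta_k,x_k-z\rangle=\langle\delta_k,z-\tilde x_{k-1}\rangle+\langle\delta_k,\tilde x_{k-1}-x_k\rangle$ to expose the desired pairing $z-\tilde x_{k-1}$.

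The core estimate is the prox (three-point) inequality for \eqref{alg:x}: for every $z\in\Xcal$, $\gamma_k\langle G(y_k;\xi_k),x_k-z\rangle\le\tfrac{1+\mu\gamma_k}{2}\big[\|\tilde x_{k-1}-z\|^2-\|x_k-z\|^2-\|\tilde x_{k-1}-x_k\|^2\big]$. Expanding $\|\tilde x_{k-1}-z\|^2$ through the quadratic identity $\tfrac{1+\mu\gamma_k}{2}\|x-\tilde x_{k-1}\|^2=\tfrac{\mu\gamma_k}{2}\|y_k-x\|^2+\tfrac12\|x_{k-1}-x\|^2-C$ turns its leading piece into $\alpha_k\mu V(y_k,z)+\tfrac{\alpha_k}{\gamma_k}V(x_{k-1},z)$ plus a nonpositive remainder, and the first of these cancels the $-\alpha_k\mu V(y_k,z)$ left by strong convexity. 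Using the coupling identity, $\tfrac L2\|\xbar_k-y_k\|^2=\tfrac{L\alpha_k^2}{2}\|x_k-\tilde x_{k-1}\|^2$ combines with the prox term $-\tfrac{\alpha_k(1+\mu\gamma_k)}{2\gamma_k}\|x_k-\tilde x_{k-1}\|^2$ into a single negative quadratic $-\tfrac{\alpha_k}{2\gamma_k}(1+\mu\gamma_k-L\alpha_k\gamma_k)\|x_k-\tilde x_{k-1}\|^2$, whose coefficient is positive by the second half of \eqref{condition:stepsizes}. Finally I absorb the stray noise term $\alpha_k\langle\delta_k,\tilde x_{k-1}-x_k\rangle$ against this negative quadratic via Young's inequality together with $\|\delta_k\|\le 2M$ (from \eqref{notation:DM}); the balance leaves precisely the residual $\tfrac{4M^2\gamma_k\alpha_k}{1+\mu\gamma_k-L\gamma_k\alpha_k}$. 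Dropping the remaining nonpositive terms $-\mu(1-\alpha_k)V(y_k,\xbar_{k-1})$ and the negative prox remainders yields the clean one-step bound with telescoping Bregman terms $\tfrac{\alpha_k}{\gamma_k}V(x_{k-1},z)-\tfrac{\alpha_k(1+\mu\gamma_k)}{\gamma_k}V(x_k,z)$, the $M^2$ residual, and $\alpha_k\langle\delta_k,z-\tilde x_{k-1}\rangle$.

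The last step is to divide by $\Gamma_k$ and sum over $t=1,\dots,k$. Since $\Gamma_k=(1-\alpha_k)\Gamma_{k-1}$, the factor $(1-\alpha_k)/\Gamma_k=1/\Gamma_{k-1}$ makes $(f(\xbar_t)-f(z))/\Gamma_t$ telescope, and the monotonicity condition \eqref{lem_f_xbar:stepsizes}, $\tfrac{\alpha_t}{\gamma_t\Gamma_t}\le\tfrac{\alpha_{t-1}(1+\mu\gamma_{t-1})}{\gamma_{t-1}\Gamma_{t-1}}$, guarantees that the incoming coefficient of $V(x_{t-1},z)$ at step $t$ is dominated by the outgoing coefficient of $V(x_{t-1},z)$ from step $t-1$; hence all Bregman terms collapse to the single boundary contribution $\tfrac{\alpha_1}{\gamma_1\Gamma_1}V(x_0,z)\le\tfrac{\alpha_0(1+\mu\gamma_0)}{\gamma_0\Gamma_0}V(x_0,z)$, and bounding $V(x_0,z)$ by $D$ via \eqref{notation:DM} gives the first term of \eqref{lem_f_xbar:Ineq}. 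Multiplying back by $\Gamma_k$ recovers the stated sums. I expect the main obstacle to be the middle step: simultaneously getting the strong-convexity $V(y_k,z)$ term to cancel out of the prox expansion and arranging the Young split of the noise so that it cancels the combined negative quadratic and produces exactly the $M^2$ residual, all while keeping the exposed error paired with $z-\tilde x_{k-1}$ rather than $x_k-z$ --- this is precisely where the two-part step-size condition \eqref{condition:stepsizes} and the coupling identity are indispensable.
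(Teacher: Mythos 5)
Your proposal is correct and follows essentially the same route as the paper's proof: the coupling identity $\xbar_k-y_k=\alpha_k(x_k-\tilde x_{k-1})$, a three-point inequality for the prox step \eqref{alg:x}, splitting the noise so it pairs with $z-\tilde x_{k-1}$, absorbing the stray term $\alpha_k\langle\delta_k,\tilde x_{k-1}-x_k\rangle$ into the negative quadratic via Young's inequality (yielding the $4M^2\gamma_k\alpha_k/(1+\mu\gamma_k-L\gamma_k\alpha_k)$ residual), and telescoping after division by $\Gamma_k$ under \eqref{lem_f_xbar:stepsizes}. Your recentering of the prox objective at $\tilde x_{k-1}$ and early application of (strong) convexity of $f$ are algebraically equivalent repackagings of the paper's use of Lan's three-point lemma in the two-Bregman form followed by the Jensen-type recombination $V(x_{k-1},x_k)+\mu\gamma_k V(y_k,x_k)\geq\frac{1+\mu\gamma_k}{2\alpha_k^2}\|\alpha_k(x_k-\tilde x_{k-1})\|^2$.
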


\begin{proof}
Using the convexity of $f$, i.e., $$f(y) \geq f(x) + \langle \nabla f(x), y - x \rangle,\quad  \forall x, y \in \mathbb{R}^d$$ and \eqref{alg:xbar} we have for all $z\in\Xcal$
\begin{align*}
f(z) + \langle \nabla f(z),\xbar_{k} - z \rangle  &= f(z) + \langle \nabla f(z),\alpha_{k}x_{k} + (1-\alpha_{k})\xbar_{k-1} - z \rangle\\
&= (1-\alpha_{k})\big[f(z) + \langle \nabla f(z),\xbar_{k-1} - z \rangle\big] + \alpha_{k}\big[f(z) + \langle \nabla f(z),x_{k} - z \rangle\big]\\
&\leq (1-\alpha_{k})f(\xbar_{k-1}) + \alpha_{k}\big[f(z) + \langle \nabla f(z),x_{k} - z \rangle\big].
\end{align*}
By the preceding relation and  \eqref{ass_Lipschitz:Ineq1} we have for all $z\in\Xcal$
\begin{align*}
f(\xbar_{k}) &\leq f(z) + \langle \nabla f(z),\xbar_{k} - z \rangle + \frac{L}{2}\|\xbar_{k} - z\|^2 \\
&\leq (1-\alpha_{k})f(\xbar_{k-1}) + \alpha_{k}\big[f(z) + \langle \nabla f(z),x_{k} - z \rangle\big]+ \frac{L}{2}\|\xbar_{k} - z\|^2 ,
\end{align*}
which by letting $z = y_{k}$ we obtain
\begin{align}
f(\xbar_{k}) 
&\leq (1-\alpha_{k})f(\xbar_{k-1}) + \alpha_{k}\big[f(y_{k}) + \langle \nabla f(y_{k}),x_{k} - y_{k} \rangle\big]+ \frac{L}{2}\|\xbar_{k} - y_{k}\|^2.\label{apx_lem:Eq1}
\end{align}
By the update of $x_{k}$ in  \eqref{alg:x} and Lemma 3.5 in \cite{Lan2019}  we have for all $z\in\Xcal$
\begin{align*}
&\gamma_{k}\left[\big\langle G(y_{k};\xi_{k})\,,\, x_{k} - y_{k}\big\rangle + \mu V(y_{k},x_{k})\right] + V(x_{k-1},x_{k})\\
&\qquad \leq \gamma_{k}\left[\big\langle G(y_{k};\xi_{k})\,,\, z - y_{k}\big\rangle + \mu V(y_{k},z)\right] +  V(x_{k-1},z) - (1+\mu\gamma_{k}) V(x_{k},z), 
\end{align*}
which implies that
\begin{align*}
\big\langle \nabla f(y_{k})\,,\, x_{k} - y_{k}\big\rangle &\leq  \mu V(y_{k},z) +  \frac{1}{\gamma_{k}}V(x_{k-1},z) - \frac{1+\mu\gamma_{k}}{\gamma_{k}} V(x_{k},z)- \mu V(y_{k},x_{k})- \frac{1}{\gamma_{k}}V(x_{k-1},x_{k})\notag\\
&\qquad + \big\langle G(y_{k};\xi_{k})\,,\, z - y_{k}\big\rangle  - \big\langle G(y_{k};\xi_{k})-\nabla f(y_{k})\,,\, x_{k} - y_{k}\big\rangle\notag\\
&=  \mu V(y_{k},z) +  \frac{1}{\gamma_{k}}V(x_{k-1},z) - \frac{1+\mu\gamma_{k}}{\gamma_{k}} V(x_{k},z)- \mu V(y_{k},x_{k})- \frac{1}{\gamma_{k}}V(x_{k-1},x_{k})
\notag\\
&\qquad + \big\langle \nabla f(y_k)\,,\,z - y_{k}\big\rangle +  \big\langle G(y_{k};\xi_{k})-\nabla f(y_{k})\,,\, z - x_{k}\big\rangle.
\end{align*}
Substituting the preceding equation into Eq.\ \eqref{apx_lem:Eq1} yields
\begin{align}
f(\xbar_{k}) 
&\leq (1-\alpha_{k})f(\xbar_{k-1}) + \alpha_{k}\big[f(y_{k}) + \big\langle \nabla f(y_k)\,,\,z - y_{k}\big\rangle + \mu V(y_{k},z) \big]+ \frac{L}{2}\|\xbar_{k} - y_{k}\|^2 \notag\\
&\qquad + \frac{\alpha_{k}}{\gamma_{k}}\Big[ V(x_{k-1},z) - (1+\mu\gamma_{k}) V(x_{k},z) - V(x_{k-1},x_{k}) - \mu\gamma_{k} V(y_{k},x_{k})\Big]
\notag\\
&\qquad +  \alpha_{k}\big\langle G(y_{k};\xi_{k})-\nabla f(y_{k})\,,\, z - x_{k}\big\rangle.\label{lem_f_xbar:Eq2}
\end{align}
We denote by $\tilde{x}_{k-1}$
\begin{align}
\tilde{x}_{k-1} = \frac{1}{1+\mu\gamma_{k}}x_{k-1} + \frac{\mu\gamma_{k}}{1+\mu\gamma_{k}}y_{k}.\label{apx_notation:xtilde}    
\end{align}
And note that 
\begin{align*}
\frac{1}{1+\mu\gamma_{k}} = \frac{\beta_{k}(1-\alpha_{k})}{\alpha_{k}(1-\beta_{k})}\qquad \text{and}\qquad \frac{\mu\gamma_{k}}{1+\mu\gamma_{k}} = \frac{\alpha_{k}-\beta_{k}}{\alpha_{k}(1-\beta_{k})}     
\end{align*}
Thus, using Eqs.\ \eqref{alg:y} and \eqref{alg:xbar}, and the preceding relations we then have
\begin{align}
\xbar_{k} - y_{k} &= \alpha_{k}x_{k} + \frac{1-\alpha_{k}}{1-\beta_{k}}   \left(y_{k}-\beta_{k}x_{k-1}\right) -y_{k} = \alpha_{k}\left[x_{k} - \frac{\beta_{k}(1-\alpha_{k})}{\alpha_{k}(1-\beta_{k})}x_{k-1} - \frac{\alpha_{k}-\beta_{k}}{\alpha_{k}(1-\beta_{k})}y_{k}\right] \notag\\ &= \alpha_{k}(x_{k}-\tilde{x}_{k-1}).\label{lem_f_xbar:Eq2a}
\end{align}
On the other hand, using the strong convexity of $V$ we have
\begin{align}
V(x_{k-1},x_{k}) + \mu\gamma_{k}V(y_{k},x_{k}) &\geq \frac{1}{2}\|x_{k-1}-x_{k}\|^2 + \frac{\mu\gamma_{k}}{2}\|x_{k}-y_{k}\|^2\notag\\
&\geq \frac{1+\mu\gamma_{k}}{2} \left\|\frac{1}{1+\mu\gamma_{k}}\big(x_{k}-x_{k-1}\big) + \frac{\mu\gamma_{k}}{1+\mu\gamma_{k}}\big(x_{k}-y_{k}\big)\right\|^2\notag\\
&= \frac{1+\mu\gamma_{k}}{2}\left\|x_{k}-\frac{1}{1+\mu\gamma_{k}}x_{k-1} - \frac{\mu\gamma_{k}}{1+\mu\gamma_{k}}y_{k}\right\|^2\notag\\ 
&= \frac{1+\mu\gamma_{k}}{2\alpha_{k}^2}\left\|\alpha_{k}(x_{k}-\tilde{x}_{k-1})\right\|^2.\label{lem_f_xbar:Eq2b}
\end{align}
We denote by $\Delta_k = G(y_{k};\xi_{k})-\nabla f(y_{k})$. Then we consider
\begin{align}
\big\langle G(y_{k};\xi_{k})-\nabla f(y_{k})\,,\, z - x_{k}\big\rangle &= \big\langle G(y_{k};\xi_{k})-\nabla f(y_{k})\,,\, \tilde{x}_{k-1} - x_{k}\big\rangle + \big\langle G(y_{k};\xi_{k})-\nabla f(y_{k})\,,\, z - \tilde{x}_{k-1}\big\rangle\notag\\
&\leq \|\Delta_{k}\|\|x_{k}-\tilde{x}_{k-1}\| + \big\langle \Delta_{k}\,,\, z - \tilde{x}_{k-1}\big\rangle.\label{lem_f_xbar:Eq2c}    
\end{align}
Substituting Eqs.\ \eqref{lem_f_xbar:Eq2a}--\eqref{lem_f_xbar:Eq2c} into Eq.\ \eqref{lem_f_xbar:Eq2} yields \begin{align}
f(\xbar_{k}) 
&\leq (1-\alpha_{k})f(\xbar_{k-1}) + \alpha_{k}\big[f(y_{k}) + \big\langle \nabla f(y_k)\,,\,z - y_{k}\big\rangle + \mu V(y_{k},z) \big]\notag\\
&\qquad + \frac{\alpha_{k}}{\gamma_{k}}\Big[ V(x_{k-1},z) - (1+\mu\gamma_{k}) V(x_{k},z)\Big] + \alpha_{k}\big\langle G(y_{k};\xi_{k})-\nabla f(y_{k})\,,\, z - \tilde{x}_{k-1}\big\rangle
\notag\\
&\qquad -\left(\frac{1+\mu\gamma_{k}}{2\gamma_{k}\alpha_{k}} - \frac{L}{2}\right)\|\alpha_{k}(x_{k}-\tilde{x}_{k-1})\|^2 + \|\Delta_{k}\|\|\alpha_{k}(x_{k}-\tilde{x}_{k-1})\|\notag\\
&\leq    (1-\alpha_{k})f(\xbar_{k-1}) + \alpha_{k}\big[f(y_{k}) + \big\langle \nabla f(y_k)\,,\,z - y_{k}\big\rangle + \mu V(y_{k},z) \big]\notag\\
&\qquad + \frac{\alpha_{k}}{\gamma_{k}}\Big[ V(x_{k-1},z) - (1+\mu\gamma_{k}) V(x_{k},z)\Big] + \alpha_{k}\big\langle G(y_{k};\xi_{k})-\nabla f(y_{k})\,,\, z - \tilde{x}_{k-1}\big\rangle \notag\\ &\qquad + \frac{\gamma_{k}\alpha_{k}\|\Delta_{k}\|^2}{1+\mu\gamma_{k}-L\gamma_{k}\alpha_{k}}\cdot\ \label{lem_f_xbar:Eq3}    
\end{align}
Diving both sides of Eq.\ \eqref{lem_f_xbar:Eq3} by $\Gamma_{k}$ and using \eqref{notation:Gamma}  we have 
\begin{align*}
\frac{1}{\Gamma_{k}}f(\xbar_{k}) &\leq \frac{1-\alpha_{k}}{\Gamma_{k}}f(\xbar_{k-1}) + \frac{\alpha_{k}}{\Gamma_{k}}\big[f(y_{k}) + \big\langle \nabla f(y_k)\,,\,z - y_{k}\big\rangle + \mu V(y_{k},z) \big]+ \frac{\gamma_{k}\alpha_{k}\|\Delta_{k}\|^2}{\Gamma_{k}(1+\mu\gamma_{k}-L\gamma_{k}\alpha_{k})}\notag\\
&\qquad + \frac{\alpha_{k}}{\Gamma_{k}\gamma_{k}}\Big[ V(x_{k-1},z) - (1+\mu\gamma_{k}) V(x_{k},z)\Big] + \frac{\alpha_{k}}{\Gamma_{k}}\big\langle G(y_{k};\xi_{k})-\nabla f(y_{k})\,,\, z - \tilde{x}_{k-1}\big\rangle\notag\\
&\leq  \frac{1}{\Gamma_{k-1}}f(\xbar_{k-1}) + \frac{\alpha_{k}}{\Gamma_{k}}f(z) + \frac{\gamma_{k}\alpha_{k}\|\Delta_{k}\|^2}{\Gamma_{k}(1+\mu\gamma_{k}-L\gamma_{k}\alpha_{k})}\notag\\
&\qquad + \frac{\alpha_{k}}{\Gamma_{k}\gamma_{k}}\Big[ V(x_{k-1},z) - (1+\mu\gamma_{k}) V(x_{k},z)\Big] + \frac{\alpha_{k}}{\Gamma_{k}}\big\langle G(y_{k};\xi_{k})-\nabla f(y_{k})\,,\, z - \tilde{x}_{k-1}\big\rangle,
\end{align*}
where the last inequality due to the convexity of $f$.  Summing up both sides of the preceding relation over $k$ from $1$ to $K$ yields 
\begin{align*}
f(\xbar_{K}) &\leq \frac{\Gamma_{K}}{\Gamma_{0}}f(\xbar_{0}) + \Gamma_{K}\sum_{k=1}^{K}\frac{\alpha_{k}}{\Gamma_{k}}f(z) + \Gamma_{K}\sum_{k=1}^{K} \frac{\gamma_{k}\alpha_{k}\|\Delta_{k}\|^2}{\Gamma_{k}(1+\mu\gamma_{k}-L\gamma_{k}\alpha_{k})}\notag\\
&\qquad + \Gamma_{K}\sum_{k=1}^{K}\frac{\alpha_{k}}{\Gamma_{k}\gamma_{k}}\Big[ V(x_{k-1},z) - (1+\mu\gamma_{k}) V(x_{k},z)\Big] \notag\\
&\qquad+ \Gamma_{K}\sum_{k=1}^{K}\frac{\alpha_{k}}{\Gamma_{k}}\big\langle G(y_{k};\xi_{k})-\nabla f(y_{k})\,,\, z - \tilde{x}_{k-1}\big\rangle\\
&\leq \frac{\Gamma_{K}}{\Gamma_{0}} f(\xbar_{0}) + f(z) + \Gamma_{K}\sum_{k=1}^{K} \frac{\gamma_{k}\alpha_{k}\|\Delta_{k}\|^2}{\Gamma_{k}(1+\mu\gamma_{k}-L\gamma_{k}\alpha_{k})}\notag\\
&\qquad + \Gamma_{K}\left[ \frac{\alpha_{0}(1+\mu\gamma_{0})}{\gamma_{0}\Gamma_{0}} V(x_{0},z) - \frac{\alpha_{K}(1+\mu\gamma_{K})}{\Gamma_{K}\gamma_{K}} V(x_{K},z)\right] \notag\\
&\qquad + \Gamma_{K}\sum_{k=1}^{K}\frac{\alpha_{k}}{\Gamma_{k}}\big\langle G(y_{k};\xi_{k})-\nabla f(y_{k})\,,\, z - \tilde{x}_{k-1}\big\rangle,
\end{align*}
where the second inequality is due to \eqref{lem_f_xbar:stepsizes}, $\alpha_1 = 1$, and the definition of $\Gamma_{k}$ to have
\begin{align}
\sum_{k=1}^{K}\frac{\alpha_{k}}{\Gamma_{k}} = \frac{\alpha_1}{\Gamma_1} + \sum_{k=2}^K \frac{1}{\Gamma_k} \left( 1 - \frac{\Gamma_k}{\Gamma_{k-1}}  \right) = \frac{1}{\Gamma_1} + \sum_{k=2}^K \left( \frac{1}{\Gamma_k} - \frac{1}{\Gamma_{k-1}}  \right) = \frac{1}{\Gamma_{K}}.\label{cond:alpha_Gamma}
\end{align}
Thus, by letting $z = x^*$ in the preceding equation and since $\|\Delta_{k}\|^2 \leq 4M^2$ we obtain \eqref{lem_f_xbar:Ineq}.
\end{proof}

\subsubsection{Proof of Theorem \ref{thm:convex}}
To prove theorem \ref{thm:convex}, we first consider the following lemma, where  handle the inner product on the right-hand side of \eqref{lem_f_xbar:Ineq} by using the geometric mixing time, similar to Lemma \ref{lem:nonconvex}. Recall that since $\mu = 0$, we have $\beta_{k} = \alpha_{k}$ and $y_{k} = \xbar_{k}$. Thus, the updates in Algorithm \ref{alg:ASGD_convex} become
\begin{align}
    \xbar_{k} &= (1-\alpha_{k})\xbar_{k-1} + \alpha_{k}x_{k-1}\label{apx_alg:xbar}\\
    x_{k} &= \arg\min_{x\in\Xcal}\Big\{\gamma_{k}\langle G(\xbar_{k};\xi_{k})\,,\,x- \xbar_{k}\rangle + \frac{1}{2}\|x-x_{k-1}\|^2\Big\}.\label{apx_alg:x}
\end{align}

\begin{lem}\label{apx_lem_convex:mixing_grad}
Let the sequences $\{x_{k},y_{k}\}$ be generated by Algorithm \ref{alg:ASGD_convex}. Then 
\begin{align}
\Eset[\langle G(\xbar_{k};\xi_{k})-\nabla f(\xbar_{k}), z-x_{k-1} \rangle] &\leq 2D\gamma_{k} + 2(4D^2L + M^2)\tau(\gamma_{k})\gamma_{k-\tau(\gamma_{k})} .    \label{apx_lem_convex_mixing_grad:Ineq}
\end{align}
\end{lem}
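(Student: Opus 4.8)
The plan is to reproduce, in the convex setting, the time-shifting mechanism behind Lemma \ref{lem:nonconvex}. The difficulty is that the noise $G(\xbar_k;\xi_k)-\nabla f(\xbar_k)$ is biased (its conditional mean is not zero) and is correlated with both $\xbar_k$ and $x_{k-1}$ through the common history, so the inner product neither vanishes in expectation nor factorizes. The remedy is to look back $\tau:=\tau(\gamma_k)$ steps, where, by the geometric mixing in Assumption \ref{assump:ergodicity}, the law of $\xi_k$ given the past at time $k-\tau$ is within $\gamma_k$ of $\pi$ in total variation. Writing $\Fcal_{k-\tau}$ for that history and taking $z=x^*$ (deterministic, in $\Xcal$), I would first split
\begin{align*}
&\langle G(\xbar_{k};\xi_{k})-\nabla f(\xbar_{k}),\, z-x_{k-1}\rangle = \langle G(\xbar_{k-\tau};\xi_{k})-\nabla f(\xbar_{k-\tau}),\, z-x_{k-\tau}\rangle \\
&\quad + \langle [G(\xbar_{k};\xi_{k})-G(\xbar_{k-\tau};\xi_{k})]-[\nabla f(\xbar_{k})-\nabla f(\xbar_{k-\tau})],\, z-x_{k-1}\rangle \\
&\quad + \langle G(\xbar_{k-\tau};\xi_{k})-\nabla f(\xbar_{k-\tau}),\, x_{k-\tau}-x_{k-1}\rangle =: T_1 + T_2 + T_3,
\end{align*}
which telescopes back to the original inner product (the cross terms cancel).

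For the leading term $T_1$, the key point is that $\xbar_{k-\tau}$ and $x_{k-\tau}$ are $\Fcal_{k-\tau}$-measurable while $z$ is deterministic, so conditioning on $\Fcal_{k-\tau}$ pulls them outside the expectation over $\xi_k$. Since $\xi_k$ sits exactly $\tau=\tau(\gamma_k)$ steps past $\Fcal_{k-\tau}$, Corollary \eqref{appendix:cor:mixing:ineq} gives $\|\Eset[G(\xbar_{k-\tau};\xi_k)\mid\Fcal_{k-\tau}]-\nabla f(\xbar_{k-\tau})\|\le\gamma_k$, and Cauchy--Schwarz with $\|z-x_{k-\tau}\|\le 2D$ yields $\Eset[T_1]\le 2D\gamma_k$, which is the first term of the bound.

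The remaining terms are controlled by Lipschitz continuity, bounded gradients, and a short drift estimate over the $\tau$-window. By \eqref{ass_Lipschitz:Ineq1} the bracketed vector in $T_2$ has norm at most $2L\|\xbar_k-\xbar_{k-\tau}\|$, and $\|z-x_{k-1}\|\le 2D$, so $|T_2|\le 4DL\|\xbar_k-\xbar_{k-\tau}\|$; likewise $\|G(\xbar_{k-\tau};\xi_k)-\nabla f(\xbar_{k-\tau})\|\le 2M$ gives $|T_3|\le 2M\|x_{k-1}-x_{k-\tau}\|$. For the drifts, \eqref{apx_alg:xbar} yields $\xbar_t-\xbar_{t-1}=\alpha_t(x_{t-1}-\xbar_{t-1})$ with $x_{t-1},\xbar_{t-1}\in\Xcal$, so $\|\xbar_t-\xbar_{t-1}\|\le 2D\alpha_t$, while \eqref{apx_alg:x} expresses $x_t=\Pi_{\Xcal}(x_{t-1}-\gamma_t G(\xbar_t;\xi_t))$, whence non-expansiveness of the projection gives $\|x_t-x_{t-1}\|\le M\gamma_t$. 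Summing over $t\in\{k-\tau+1,\dots,k\}$ and using that $\alpha_t,\gamma_t$ are nonincreasing produces $\|\xbar_k-\xbar_{k-\tau}\|\le 2D\tau\alpha_{k-\tau}$ and $\|x_{k-1}-x_{k-\tau}\|\le M\tau\gamma_{k-\tau}$, hence $\Eset[T_2]\le 8D^2L\tau\alpha_{k-\tau}$ and $\Eset[T_3]\le 2M^2\tau\gamma_{k-\tau}$. Converting $\alpha_{k-\tau}$ into $\gamma_{k-\tau}$ through the step-size choice \eqref{thm_convex:stepsizes} and collecting constants then gives $2(4D^2L+M^2)\tau(\gamma_k)\gamma_{k-\tau(\gamma_k)}$, completing \eqref{apx_lem_convex_mixing_grad:Ineq}.

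The main obstacle is engineering the decomposition so that the one surviving, mildly-biased term $T_1$ has all of its non-noise factors measurable with respect to $\Fcal_{k-\tau}$; this is precisely what lets the mixing inequality of Corollary \eqref{appendix:cor:mixing:ineq} apply and turns the conditional bias into the harmless $\gamma_k$ factor. Once that is set up, the control of $T_2$ and $T_3$ via the Lipschitz and boundedness assumptions and the telescoped per-step drifts is routine bookkeeping.
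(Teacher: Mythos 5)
Your proposal is correct and follows essentially the same route as the paper's proof: shift back $\tau(\gamma_k)$ steps, apply the mixing bound \eqref{appendix:cor:mixing:ineq} to the term whose non-noise factors are $\Fcal_{k-\tau(\gamma_k)}$-measurable (yielding the $2D\gamma_k$ bias term), and control the drift terms via Lipschitz continuity and the per-step bounds $\|\xbar_t-\xbar_{t-1}\|\leq 2D\alpha_t$ and $\|x_t-x_{t-1}\|\leq M\gamma_t$. The only cosmetic differences are that you organize the telescoping as a single three-term split where the paper nests it in two stages, and that you derive $\|x_t-x_{t-1}\|\leq M\gamma_t$ from non-expansiveness of the projection rather than from the optimality condition; both arguments also invoke the same final conversion $\alpha_{k-\tau}\leq\gamma_{k-\tau}$ to collect the constants.
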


\begin{proof}
First, by the optimality condition of \eqref{apx_alg:x} we have
\begin{align*}
\langle\gamma_{k}G(\xbar_{k};\xi_{k}) + x_{k}-x_{k-1}\,,\,x_{k-1} - x_{k}  \rangle \geq 0,    
\end{align*}
which by rearranging the equation and using \eqref{notation:DM} we have
\begin{align*}
\|x_{k}-x_{k-1}\|^2 \leq \langle\gamma_{k}G(\xbar_{k};\xi_{k})\,,\,x_{k-1} - x_{k}  \rangle \leq M\gamma_{k}\|x_{k}-x_{k-1}\|,    
\end{align*}
Dividing both sides of the equation above by $x_{k}-x_{k-1}$ gives
\begin{align} 
\|x_{k}-x_{k-1}\|\leq M\gamma_{k}.\label{apx_lem_convex_mixing_grad:Eq1a}
\end{align}
Since $\mu=0$, $\tilde{x}_{k-1} = x_{k-1}$. Next, we consider
\begin{align}
\langle G(\xbar_{k};\xi_{k})-\nabla f(\xbar_{k}), z-x_{k-1} \rangle &= \langle G(\xbar_{k};\xi_{k}) - \nabla f(\xbar_{k})\,,\, z - x_{k-\tau(\gamma_{k})}  \rangle \notag\\
&\qquad + \langle G(\xbar_{k};\xi_{k}) - \nabla f(\xbar_{k})\,,\, x_{k-\tau(\gamma_{k})} - x_{k-1}  \rangle.\label{apx_lem_convex_mixing_grad:Eq1}    
\end{align}
We now provide upper bounds for each term on the right-hand side of Eq.\ \eqref{apx_lem_convex_mixing_grad:Eq1}. First, using \eqref{notation:DM} consider the first term on the right-hand side of Eq.\ \eqref{apx_lem_convex_mixing_grad:Eq1} 
\begin{align*}
&\langle G(\xbar_{k};\xi_{k})-\nabla f(\xbar_{k}), z-x_{k-\tau(\gamma_{k})}\rangle\notag\\ 
&= \langle G(\xbar_{k};\xi_{k})-G(\xbar_{k-\tau(\gamma_{k})};\xi_{k}), z-x_{k-\tau(\gamma_{k})}\rangle + \langle G(\xbar_{k-\tau(\gamma_{k})};\xi_{k})-\nabla f(\xbar_{k-\tau(\gamma_{k})}), z-x_{k-\tau(\gamma_{k})}\rangle\notag\\
&\qquad + \langle \nabla f(\xbar_{k-\tau(\gamma_{k})})-\nabla f(\xbar_{k}), z-x_{k-\tau(\gamma_{k})}\rangle\notag\\ 
&\leq 2L\|\xbar_{k}-\xbar_{k-\tau(\gamma_{k})}\|\|z-x_{k-\tau(\gamma_{k})}\| + \langle G(\xbar_{k-\tau(\gamma_{k})};\xi_{k})-\nabla f(\xbar_{k-\tau(\gamma_{k})}), z-x_{k-\tau(\gamma_{k})}\rangle\notag\\
&\leq 4DL\sum_{t=k-\tau(\gamma_{k})}^{k-1}\|\xbar_{t+1}-\xbar_{t}\|  + \langle G(\xbar_{k-\tau(\gamma_{k})};\xi_{k})-\nabla f(\xbar_{k-\tau(\gamma_{k})}), z-x_{k-\tau(\gamma_{k})}\rangle\notag\\
&\leq 8D^2L\tau(\gamma_{k})\alpha_{k-\tau(\gamma_{k})} + \langle G(\xbar_{k-\tau(\gamma_{k})};\xi_{k})-\nabla f(\xbar_{k-\tau(\gamma_{k})}), z-x_{k-\tau(\gamma_{k})}\rangle, 
\end{align*}
where $\tau(\gamma_{k})$ be the mixing time of the underlying Markov chain associated with the step size $\gamma_{k}$, defined in \eqref{notation:tau}. We denote by $\Fcal_{k}$ the filtration containing all the history generated by the algorithm up to time $k$. Then, using \eqref{appendix:cor:mixing:ineq} we have 
\begin{align}
&\Eset[\langle G(\xbar_{k};\xi_{k})-\nabla f(\xbar_{k}), z-x_{k-\tau(\gamma_{k})}\,|\,\Fcal_{k-\tau(\gamma_{k})}\rangle]\notag\\  
&\leq 8D^2L\tau(\gamma_{k})\alpha_{k-\tau(\gamma_{k})} + \|z-x_{k-\tau(\gamma_{k})}\|\|\Eset[G(\xbar_{k};\xi_{k})-\nabla f(\xbar_{k})\,|\,\Fcal_{k-\tau(\gamma_{k})}\|\notag\\
&\leq 8D^2L\tau(\gamma_{k})\alpha_{k-\tau(\gamma_{k})} +  2D\gamma_{k}.\label{apx_lem_convex_mixing_grad:Eq1b}    
\end{align}
Second, using Eqs.\ \eqref{apx_lem_convex_mixing_grad:Eq1a} and \eqref{notation:DM} we consider the second term on the right-hand side of \eqref{apx_lem_convex_mixing_grad:Eq1}
\begin{align}
&\langle G(\xbar_{k};\xi_{k}) - \nabla f(\xbar_{k})\,,\, x_{k-\tau(\gamma_{k})} - x_{k-1}  \rangle\leq 2M\|x_{k-\tau(\gamma_{k})}-x_{k-1}\|\notag\\
& \leq 2M\sum_{t=k-\tau(\gamma_{k})}^{k-2}\|x_{t+1}-x_{t}\| \leq 2M^2\tau(\gamma_{k})\gamma_{k-\tau(\gamma_{k})}.
\label{apx_lem_convex_mixing_grad:Eq1c}
\end{align}
Taking the expectation on both sides of \eqref{apx_lem_convex_mixing_grad:Eq1} and using Eqs.\ \eqref{apx_lem_convex_mixing_grad:Eq1b}, \eqref{apx_lem_convex_mixing_grad:Eq1c}, and $\alpha_{k}\leq \gamma_{k}$  immediately gives Eq.\ \eqref{apx_lem_convex_mixing_grad:Ineq}.
\end{proof}

\subsubsection{Proof of Theorem \ref{thm:sc}}
Similar to Lemma \ref{apx_lem_convex:mixing_grad}, we start with the following lemma.
\begin{lem}\label{apx_lem_sc:mixing_grad}
Let the sequences $\{x_{k},y_{k}\}$ be generated by Algorithm \ref{alg:ASGD_convex} and $\tilde{x}$ is defined in \eqref{apx_notation:xtilde}. Then 
\begin{align}
\Eset[\langle G(y_{k};\xi_{k})-\nabla f(y_{k}), z-\tilde{x}_{k-1} \rangle] &\leq (2M^2 +4\mu MD+24\mu LD^2)\tau(\gamma_{k})\gamma_{k-\tau(\gamma_{k})}\notag\\ 
&\qquad + (2D+2M^2+8\mu MD)\gamma_{k}.    \label{apx_lem_sc_mixing_grad:Ineq}
\end{align}
\end{lem}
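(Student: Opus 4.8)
The plan is to follow the template of Lemma~\ref{apx_lem_convex:mixing_grad}, with the interpolated point $\tilde{x}_{k-1}$ from \eqref{apx_notation:xtilde} and the iterate $y_{k}$ now playing the roles that $x_{k-1}$ and $\xbar_{k}$ played in the convex case. The whole difficulty is that $G(y_{k};\xi_{k})-\nabla f(y_{k})$ is a \emph{biased}, Markov-dependent noise term; the device for killing the bias is to rewind everything by the mixing time $\tau(\gamma_{k})$ so that, after conditioning on $\Fcal_{k-\tau(\gamma_{k})}$, the one-step mixing estimate \eqref{appendix:cor:mixing:ineq} applies to the only term in which $\xi_{k}$ still appears.

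First I would insert the rewound reference point and split $\langle G(y_{k};\xi_{k})-\nabla f(y_{k}),\,z-\tilde{x}_{k-1}\rangle$ as $\langle G(y_{k};\xi_{k})-\nabla f(y_{k}),\,z-\tilde{x}_{k-\tau(\gamma_{k})-1}\rangle + \langle G(y_{k};\xi_{k})-\nabla f(y_{k}),\,\tilde{x}_{k-\tau(\gamma_{k})-1}-\tilde{x}_{k-1}\rangle$. For the first inner product I would replace the gradient arguments $y_{k}$ by $y_{k-\tau(\gamma_{k})}$, paying a Lipschitz price $2L\|y_{k}-y_{k-\tau(\gamma_{k})}\|\,\|z-\tilde{x}_{k-\tau(\gamma_{k})-1}\|\leq 4DL\|y_{k}-y_{k-\tau(\gamma_{k})}\|$ by Assumption~\ref{ass:Lipschitz} and \eqref{notation:DM}; the surviving term $\langle G(y_{k-\tau(\gamma_{k})};\xi_{k})-\nabla f(y_{k-\tau(\gamma_{k})}),\,z-\tilde{x}_{k-\tau(\gamma_{k})-1}\rangle$ then has all its non-noise arguments measurable with respect to $\Fcal_{k-\tau(\gamma_{k})}$, so taking the conditional expectation and applying \eqref{appendix:cor:mixing:ineq} with $\|z-\tilde{x}_{k-\tau(\gamma_{k})-1}\|\leq 2D$ bounds its expectation by $2D\gamma_{k}$, exactly as in \eqref{apx_lem_convex_mixing_grad:Eq1b}. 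The second inner product is bounded crudely by $2M\|\tilde{x}_{k-1}-\tilde{x}_{k-\tau(\gamma_{k})-1}\|$ using \eqref{notation:DM}.

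The two quantities that remain are the increments $\|y_{k}-y_{k-\tau(\gamma_{k})}\|$ and $\|\tilde{x}_{k-1}-\tilde{x}_{k-\tau(\gamma_{k})-1}\|$, and controlling the latter is the main obstacle, since $\tilde{x}$ interpolates the two iterate sequences. For the $y$-increment I would use \eqref{alg:y} and \eqref{alg:xbar} to get $\|y_{t+1}-y_{t}\|\leq 2D(\beta_{t+1}+\alpha_{t}+\beta_{t})\leq 6D\alpha_{t}$ (since $\beta_{t}\leq\alpha_{t}$ and the step sizes are nonincreasing), hence $\|y_{k}-y_{k-\tau(\gamma_{k})}\|\leq 6D\tau(\gamma_{k})\alpha_{k-\tau(\gamma_{k})}$. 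For the $\tilde{x}$-increment I would route through the cheap estimate $\|\tilde{x}_{j-1}-x_{j-1}\| = \frac{\mu\gamma_{j}}{1+\mu\gamma_{j}}\|y_{j}-x_{j-1}\|\leq 2\mu D\gamma_{j}$ together with the optimality bound $\|x_{j}-x_{j-1}\|\leq M\gamma_{j}$ used in \eqref{apx_lem_convex_mixing_grad:Eq1a}, which yields $\|\tilde{x}_{k-1}-\tilde{x}_{k-\tau(\gamma_{k})-1}\|\leq 2\mu D\gamma_{k}+M\tau(\gamma_{k})\gamma_{k-\tau(\gamma_{k})}+2\mu D\gamma_{k-\tau(\gamma_{k})}$.

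Finally I would assemble the pieces and invoke the relation $\alpha_{k}\leq\mu\gamma_{k}$, which holds for the step sizes $\alpha_{k}=2/(k+1)$, $\gamma_{k}=2/(\mu k)$ of Theorem~\ref{thm:sc}: this is precisely what turns the $\alpha$-based Lipschitz error $4DL\cdot 6D\tau(\gamma_{k})\alpha_{k-\tau(\gamma_{k})}$ into the $\mu$-weighted term $24\mu LD^{2}\tau(\gamma_{k})\gamma_{k-\tau(\gamma_{k})}$ and is the source of the $\mu$-dependence in the stated constants. Collecting the $\gamma_{k}$ contributions (the mixing bias together with the $\mu\gamma$-pieces coming from the gap between $\tilde{x}$ and $x$) and the $\tau(\gamma_{k})\gamma_{k-\tau(\gamma_{k})}$ contributions (the $2M^{2}$ from the drift of $x$, the $4\mu MD$ from the $\tilde{x}$--$x$ gap, and the $24\mu LD^{2}$ from the $y$-rewinding), and absorbing $\gamma_{k-\tau(\gamma_{k})}\leq\tau(\gamma_{k})\gamma_{k-\tau(\gamma_{k})}$, then gives \eqref{apx_lem_sc_mixing_grad:Ineq}.
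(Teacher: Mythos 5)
Your overall strategy is the same as the paper's: rewind the non-noise argument by the mixing time, pay Lipschitz and increment prices for the rewinding, and kill the bias of the surviving term by conditioning on $\Fcal_{k-\tau(\gamma_{k})}$ and invoking \eqref{appendix:cor:mixing:ineq}. Your $y$-increment bound, the gap estimate $\|\tilde{x}_{j-1}-x_{j-1}\|\leq 2\mu D\gamma_{j}$, and the use of $\alpha_{k}\leq\mu\gamma_{k}$ are all sound and mirror the paper. However, there is one genuine error, and it is precisely the step that makes your constants come out right: you import the prox-increment bound $\|x_{j}-x_{j-1}\|\leq M\gamma_{j}$ from the convex case \eqref{apx_lem_convex_mixing_grad:Eq1a}. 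That bound is not valid here. In the strongly convex case the prox step \eqref{alg:x} contains the extra term $\mu\gamma_{k}V(y_{k},x)$, so the optimality condition reads $\langle \gamma_{k}G(y_{k};\xi_{k})+\mu\gamma_{k}(x_{k}-y_{k})+x_{k}-x_{k-1},\,x_{k-1}-x_{k}\rangle\geq 0$, which yields $\|x_{j}-x_{j-1}\|\leq (M+2\mu D)\gamma_{j}$ --- this is exactly the paper's Eq.\ \eqref{apx_lem_sc_mixing_grad:Eq1a}.

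With the corrected increment bound, your decomposition no longer produces the stated constants. Because you rewind to $\tilde{x}_{k-\tau(\gamma_{k})-1}$ rather than to $x_{k-\tau(\gamma_{k})}$, you pay the $\tilde{x}$--$x$ gap at \emph{both} endpoints; the gap at the far endpoint contributes $4\mu MD\gamma_{k-\tau(\gamma_{k})}$, which can only be absorbed into the $\tau(\gamma_{k})\gamma_{k-\tau(\gamma_{k})}$ bucket. Adding the corrected drift $2M(M+2\mu D)\tau(\gamma_{k})\gamma_{k-\tau(\gamma_{k})}$, your coefficient on $\tau(\gamma_{k})\gamma_{k-\tau(\gamma_{k})}$ becomes $2M^{2}+8\mu MD+24\mu LD^{2}$, strictly larger than the stated $2M^{2}+4\mu MD+24\mu LD^{2}$, and this excess is not compensated by your smaller $\gamma_{k}$ coefficient since $\tau(\gamma_{k})\gamma_{k-\tau(\gamma_{k})}\geq\gamma_{k}$ in general. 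The paper avoids this by rewinding to $x_{k-\tau(\gamma_{k})}$ and writing $x_{k-\tau(\gamma_{k})}-\tilde{x}_{k-1}=(x_{k-\tau(\gamma_{k})}-x_{k})+(x_{k}-x_{k-1})+\frac{\mu\gamma_{k}}{1+\mu\gamma_{k}}(x_{k-1}-y_{k})$, so the $\tilde{x}$--$x$ gap is paid only once, at the near end, where it costs $O(\gamma_{k})$. Your argument therefore proves a bound of the same form and order (Theorem \ref{thm:sc} would survive with adjusted constants), but as written it does not establish \eqref{apx_lem_sc_mixing_grad:Ineq}; the agreement of your claimed constants with the lemma is an artifact of the invalid prox bound.
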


\begin{proof}
First, by the optimality condition of \eqref{alg:x} we have
\begin{align*}
\langle\gamma_{k}G(y_{k};\xi_{k}) + \mu\gamma_{k}(x_{k}-y_{k}) + x_{k}-x_{k-1}\,,\,x_{k-1} - x_{k}  \rangle \geq 0,    
\end{align*}
which by rearranging the equation and using \eqref{notation:DM} we have
\begin{align*}
\|x_{k}-x_{k-1}\|^2 \leq \langle\gamma_{k}G(y_{k};\xi_{k}) + \mu\gamma_{k}(x_{k}-y_{k})\,,\,x_{k-1} - x_{k}  \rangle \leq (M+2D\mu)\gamma_{k}\|x_{k}-x_{k-1}\|,    
\end{align*}
Dividing both sides of the equation above by $x_{k}-x_{k-1}$ gives
\begin{align} 
\|x_{k}-x_{k-1}\|\leq (M+2D\mu)\gamma_{k}.\label{apx_lem_sc_mixing_grad:Eq1a}
\end{align}
Second, we consider
\begin{align*}
y_{k+1} - y_{k} &= \xbar_{k} - \xbar_{k-1} -\beta_{k+1}(\xbar_{k} - x_{k}) +\beta_{k}(\xbar_{k-1} - x_{k-1})\notag\\
&= \alpha_{k}(x_{k}-\xbar_{k-1}) - -\beta_{k+1}(\xbar_{k} - x_{k}) +\beta_{k}(\xbar_{k-1} - x_{k-1}),
\end{align*}
which implies that
\begin{align}
\|y_{k+1}-y_{k}\| \leq 2D\alpha_{k} + 4D\beta_{k}\leq 6\mu D\gamma_{k},\label{apx_lem_sc_mixing_grad:Eq1b}
\end{align}
where we use the fact that $\beta_{k}\leq \alpha_{k} = \mu\gamma_{k}$. Third, we consider
\begin{align}
\langle G(y_{k};\xi_{k})-\nabla f(y_{k}), z-\tilde{x}_{k-1} \rangle &= \langle G(y_{k};\xi_{k}) - \nabla f(y_{k})\,,\, z - x_{k-\tau(\gamma_{k})}  \rangle \notag\\
&\qquad + \langle G(y_{k};\xi_{k}) - \nabla f(y_{k})\,,\, x_{k-\tau(\gamma_{k})} - x_{k}  \rangle\notag\\ 
&\qquad + \langle G(y_{k};\xi_{k}) - \nabla f(y_{k})\,,\, x_{k} - \tilde{x}_{k-1}\rangle.\label{apx_lem_sc_mixing_grad:Eq1c}    
\end{align}
Note that by \eqref{apx_notation:xtilde} we have
\begin{align*}
x_{k} - \tilde{x}_{k-1} &= x_{k}  - \frac{1}{1+\mu\gamma_{k}}x_{k-1}- \frac{\mu\gamma_{k}}{1+\mu\gamma_{k}}y_{k} = x_{k} - x_{k-1} +  \frac{\mu\gamma_{k}}{1+\mu\gamma_{k}}(x_{k-1}-y_{k}),
\end{align*}
which by substituing into Eq.\ \eqref{apx_lem_sc_mixing_grad:Eq1c} yields
\begin{align}
\langle G(y_{k};\xi_{k})-\nabla f(y_{k}), z-\tilde{x}_{k-1} \rangle &= \langle G(y_{k};\xi_{k}) - \nabla f(y_{k})\,,\, z - x_{k-\tau(\gamma_{k})}  \rangle \notag\\
&\qquad + \langle G(y_{k};\xi_{k}) - \nabla f(y_{k})\,,\, x_{k-\tau(\gamma_{k})} - x_{k}  \rangle\notag\\ 
&\qquad + \langle G(y_{k};\xi_{k}) - \nabla f(y_{k})\,,\, x_{k} - x_{k-1}\rangle \notag\\
&\qquad + \frac{\mu\gamma_{k}}{1+\mu\gamma_{k}}\langle G(y_{k};\xi_{k}) - \nabla f(y_{k})\,,\, x_{k} - y_{k}\rangle.\label{apx_lem_sc_mixing_grad:Eq1}    
\end{align}
Next, we analyze the right-hand side of Eq. \eqref{apx_lem_sc_mixing_grad:Eq1}. Using \eqref{notation:DM} and Assumption \ref{ass:Lipschitz}, consider the first term
\begin{align*}
&\langle G(y_{k};\xi_{k})-\nabla f(y_{k}), z-x_{k-\tau(\gamma_{k})}\rangle\notag\\ 
&= \langle G(y_{k};\xi_{k})-G(y_{k-\tau(\gamma_{k})};\xi_{k}), z-x_{k-\tau(\gamma_{k})}\rangle + \langle G(y_{k-\tau(\gamma_{k})};\xi_{k})-\nabla f(y_{k-\tau(\gamma_{k})}), z-x_{k-\tau(\gamma_{k})}\rangle\notag\\
&\qquad + \langle \nabla f(y_{k-\tau(\gamma_{k})})-\nabla f(y_{k}), z-x_{k-\tau(\gamma_{k})}\rangle\notag\\ 
&\leq 4DL\|y_{k}-y_{k-\tau(\gamma_{k})}\| + + \langle G(y_{k-\tau(\gamma_{k})};\xi_{k})-\nabla f(y_{k-\tau(\gamma_{k})}), z-x_{k-\tau(\gamma_{k})}\rangle\notag\\
&\leq 4DL \sum_{t=k-\tau(\gamma_{k})}^{k-1}\|y_{t+1}-y_{t}\| + \langle G(y_{k-\tau(\gamma_{k})};\xi_{k})-\nabla f(y_{k-\tau(\gamma_{k})}), z-x_{k-\tau(\gamma_{k})}\rangle\notag\\
&\leq 24\mu L D^2\sum_{t=k-\tau(\gamma_{k})}^{k-1}\gamma_{t}  + \langle G(y_{k-\tau(\gamma_{k})};\xi_{k})-\nabla f(y_{k-\tau(\gamma_{k})}), z-x_{k-\tau(\gamma_{k})}\rangle\notag\\
&\leq 24\mu L D^2\tau(\gamma_{k})\gamma_{k-\tau(\gamma_{k})} +  +  \langle G(y_{k-\tau(\gamma_{k})};\xi_{k})-\nabla f(y_{k-\tau(\gamma_{k})}), z-x_{k-\tau(\gamma_{k})}\rangle, 
\end{align*}
where the second last inequality is due to \eqref{apx_lem_sc_mixing_grad:Eq1b}. Taking the conditional expectation w.r.t $\Fcal_{k}$ and using \eqref{appendix:cor:mixing:ineq} yield
\begin{align}
&\Eset[\langle G(y_{k};\xi_{k})-\nabla f(y_{k}), z-x_{k-\tau(\gamma_{k})}\rangle\,|\,\Fcal_{k-\tau(\gamma_{k})}\rangle] \leq  24\mu L D^2\tau(\gamma_{k})\gamma_{k-\tau(\gamma_{k})} + 2D\gamma_{k}.\label{apx_lem_sc_mixing_grad:Eq1d}    
\end{align}
Second, using Eqs.\ \eqref{apx_lem_sc_mixing_grad:Eq1a} and \eqref{notation:DM} we consider the second and third terms on the right-hand side of \eqref{apx_lem_sc_mixing_grad:Eq1}
\begin{align}
&\langle G(y_{k};\xi_{k}) - \nabla f(y_{k})\,,\, x_{k-\tau(\gamma_{k})} - x_{k}  \rangle + \langle G(y_{k};\xi_{k}) - \nabla f(y_{k})\,,\, x_{k} - x_{k-1}\rangle\notag\\
& \leq 2M\|x_{k-\tau(\gamma_{k})}-x_{k}\| + 2M \|x_{k}-x_{k-1}\|\leq 2M\sum_{t=k+1-\tau(\gamma_{k})}^{k}\|x_{t}-x_{t-1}\| + 2M\|x_{k}-x_{k-1}\|\notag\\
&\stackrel{\eqref{apx_lem_sc_mixing_grad:Eq1a}}{\leq} 2M(M+2\mu D)\sum_{t=k+1-\tau(\gamma_{k})}^{k}\gamma_{t} +  2 M(M+2\mu D)\gamma_{k}\leq 2 M(M+2\mu D)\big[\tau(\gamma_{k})\gamma_{k-\tau(\gamma_{k})}+\gamma_{k}\big],
\label{apx_lem_sc_mixing_grad:Eq1e}
\end{align}
where the last inequality is due to the fact that $\gamma_{k}$ is nonincreasing. Finally, using \eqref{notation:DM} we consider the last term of Eq.\ \eqref{apx_lem_sc_mixing_grad:Eq1} 
\begin{align}
\frac{\mu\gamma_{k}}{1+\mu\gamma_{k}}\langle G(y_{k};\xi_{k}) - \nabla f(y_{k})\,,\, x_{k} - y_{k}\rangle \leq \frac{4\mu MD\gamma_{k}}{1+\mu\gamma_{k}}\cdot    \label{apx_lem_sc_mixing_grad:Eq1f} 
\end{align}
Taking the expectation on both sides of \eqref{apx_lem_sc_mixing_grad:Eq1} and using Eqs.\ \eqref{apx_lem_sc_mixing_grad:Eq1d}--\eqref{apx_lem_sc_mixing_grad:Eq1f} immediately gives Eq.\ \eqref{apx_lem_sc_mixing_grad:Ineq}.
\end{proof}

\begin{proof}[Proof of Theorem \ref{thm:sc}]
First, using \eqref{notation:Gamma} and \eqref{thm_sc:stepsizes} gives $\Gamma_{0} = 1$, $\alpha_{0} = 2$, and $\gamma_{0}=2/\mu$. Second, it is straightforward to verify that \eqref{thm_sc:stepsizes} satisfies  \eqref{condition:stepsizes} and  \eqref{lem_f_xbar:stepsizes}. Thus, using \eqref{apx_lem_sc_mixing_grad:Ineq} into \eqref{lem_f_xbar:Ineq} and since $L=0$ we have
\begin{align}
f(\xbar_{k}) - f(x^*) &\leq [f(\xbar_{0}) + 3\mu D]\Gamma_{k} + \Gamma_{k}\sum_{t=1}^{k} \frac{4M^2\gamma_{t}\alpha_{t}}{\Gamma_{t}(1+\mu\gamma_{t})}\notag\\
&\quad+ (2M^2 +4\mu MD+24\mu LD^2)
\Gamma_{k}\sum_{t=1}^{k}\frac{\tau(\gamma_{t})\alpha_{t}\gamma_{t-\tau(\gamma_{t})}}{\Gamma_{t}}\notag\\
&\quad + 2(D+M^2+4\mu MD)\Gamma_{k}\sum_{t=1}^{k}\frac{\alpha_{t}\gamma_{t}}{\Gamma_{t}}\cdot\label{thm_sc:Eq1}
\end{align}
Next, consider each summand on the right-hand side of \eqref{thm_sc:Eq1}. Using \eqref{thm_sc:stepsizes} and \eqref{notation:Gamma} (to have $\Gamma_{t} = 2/t(t+1)$) yields
\begin{align}
&\sum_{t=1}^{k}\frac{\gamma_{t}\alpha_{t}}{\Gamma_{t}(1+\mu\gamma_{t})} = \sum_{t=1}^{k}\frac{4 t(t+1)}{2\mu(t+1)^2(1+\frac{2}{t+1})}= \sum_{t=1}^{k}\frac{2 t}{\mu(t+3)} \leq \frac{2k}{\mu}\cdot\label{thm_sc:Eq1a}
\end{align}
Using \eqref{notation:tau} and \eqref{thm_sc:stepsizes} gives $\tau(\gamma_{k}) = \log(\mu(k+1)/2)$. Theregore, $\mu (k+1) /2 \geq \tau(\gamma_{k})$ and we obtain 
\begin{align*}
\gamma_{k-\gamma(\gamma_{k})} = \frac{2}{\mu(k+1-\log(\mu(k+1)/2))} \leq \frac{(2+\mu)\tau(\gamma_t)}{\mu k}\cdot   
\end{align*}
Using the relation above, \eqref{thm_convex:stepsizes}, and $\Gamma_{t} = 2/(t(t+1))$ gives 
\begin{align}
&\sum_{t=1}^{k}\frac{\alpha_{t}\tau(\gamma_{t})\gamma_{t-\tau(\gamma_{t})}}{\Gamma_{t}}\leq \frac{2+\mu}{\mu}\sum_{t=1}^{k}\tau(\gamma_{t}) \leq  \frac{(2+\mu)(k+1)\log(\mu(k+1)/2)}{\mu}\cdot\label{thm_sc:Eq1b}
\end{align}
Finally, we consider
\begin{align}
\sum_{t=1}^{k}\frac{\alpha_{t}\gamma_{t}}{\Gamma_{t}} \leq \frac{2k}{\mu}\cdot\label{thm_sc:Eq1c}
\end{align}
Using \eqref{thm_sc:Eq1a}--\eqref{thm_sc:Eq1c} into \eqref{thm_sc:Eq1} together with $\Gamma_{k} = 2/k(k+1)$ immediately gives \eqref{thm_sc:Ineq}, i.e.,
\begin{align*}
 f(\xbar_{k}) - f(x^*) &\leq \frac{2f(\xbar_{0}) + 6\mu D}{k(k+1)} + \frac{8M^2}{\mu(k+1)} + \frac{2(D + M^2+4\mu MD)}{\mu(k+1)}\notag\\
&\qquad+ \frac{4(M^2 +2\mu MD+12\mu LD^2)
(2+\mu)[\log(\mu/2)+\log(k+1)]}{\mu k}\notag\\
&=  \frac{2f(\xbar_{0}) + 6\mu D}{k(k+1)}+ \frac{2D + 10M^2 + 8\mu MD}{\mu(k+1)} \\ 
& \qquad + \frac{4(M^2 +2\mu MD+12\mu LD^2)
(2+\mu)\log(\frac{\mu(k+1)}{2})}{\mu k}\cdot 
\end{align*}
\end{proof}

\end{document}